\documentclass{article}

\usepackage[margin=3.4cm]{geometry}

\usepackage[utf8]{inputenc}

\usepackage[dvipsnames]{xcolor}

\usepackage{amsmath,amssymb,mathtools,amsfonts,amsthm,mathrsfs,physics}
\usepackage{mathtools}
\usepackage{thmtools}
\usepackage{yhmath}
\usepackage{calc}
\usepackage{tikz}
\usepackage{tikz-cd}
\usepackage{pgfplots}
\pgfplotsset{compat=1.18}
\usetikzlibrary{graphs,arrows,decorations.pathmorphing,decorations.markings,fit,positioning,hobby,arrows.meta}

\usepackage[hidelinks]{hyperref} %with hidden color
\usepackage[capitalize, nameinlink, noabbrev]{cleveref}

\usepackage{enumitem}
\usepackage{rotating}

\usepackage{booktabs}

\usepackage{graphicx}
\usepackage{subcaption}
\usepackage{float}

\usepackage{diagbox}
\usepackage{multirow}

\usepackage{array}

\usepackage[export]{adjustbox}

\theoremstyle{plain}
\newtheorem{theorem}{Theorem}[section]

\newtheorem{proposition}[theorem]{Proposition}

\newtheorem{corollary}[theorem]{Corollary}
\newtheorem{lemma}[theorem]{Lemma}

\theoremstyle{definition}
\newtheorem{definition}[theorem]{Definition}

\theoremstyle{remark}

\theoremstyle{remark}

\usepackage{adjustbox}

\newcommand{\R}{\mathbb{R}}

\newcommand{\define}[1]{{\bf \boldmath{#1}}}

\newcommand{\pder}[2][]{\frac{\partial#1}{\partial#2}}
\title{Minimal intersection radius for \texorpdfstring{$n$}{n} growing, non-homogeneous ellipsoids in \texorpdfstring{$\mathbb{R}^d$}{R^d}}

\author{Barbara Giunti\footnote{SUNY at Albany} \footnote{Corresponding author, \texttt{bgiunti@albany.edu}} \and Sean Hill$^*$ \and Felix X.-F. Ye$^*$}

\date{}
\begin{document}

\maketitle

\begin{abstract}
In this paper, we compute the Minimal Intersection Radius (MIR) of growing, non-homogeneous ellipsoids in arbitrary ambient dimension. 
We provide a geometric method to find the MIR using techniques from convex optimization, a secondary method using second-order cone programs, and show that the MIR can be phrased as an LP-type problem, where the computation from convex optimization acts as a certificate. 
We implement these methods and benchmark them using different convex solvers, and with or without the LP setting. 
We also provide a comparison with similar but different problems that appeared in the literature, and show that finding the MIR is, in general, not equivalent to finding the minimal enclosing ellipsoid.
\end{abstract}

\section{Introduction}\label{sec:intro}

In this paper, we find the minimal intersection radius (MIR) for a collection of growing non-homogeneous ellipsoids in arbitrary ambient dimension (see \cref{fig_growing_triple,fig_growing_mutual}). 
We assume that we are given a finite set of points with point-dependent ellipsoidal neighborhoods: the ``balls'' of radius $r$ around a given point are not circular shaped but rather ellipsoidal.
We emphasize these ellipsoids vary in orientation and growth speed from point to point.
\begin{figure}[htbp]
\centering
\newcommand{\triangleellipses}[1]{
\begin{tikzpicture}[baseline=(basepoint)]
\def\radius{#1}

\begin{scope}[blend mode=multiply]
\coordinate (1) at (0,0);
\coordinate (2) at (1,0);
\coordinate (3) at (0.5,0.88);

% A reference for baseline alignment
\coordinate (basepoint) at (1);

% ellipses aligned with opposite edges
\foreach \i/\a/\b in {1/2/3,2/1/3,3/1/2}{

    \pgfmathanglebetweenpoints
    {\pgfpointanchor{\a}{center}}
    {\pgfpointanchor{\b}{center}}
    \edef\angle{\pgfmathresult}

    \pgfmathsetmacro{\sizefactor}{1.05}
    \pgfmathsetmacro{\elong}{1.35}

    \pgfmathsetmacro{\aaxis}{\sizefactor*\elong*\radius}
    \pgfmathsetmacro{\baxis}{\sizefactor*(1/\elong)*\radius}

    \begin{scope}[shift={(\i)}, rotate=\angle]
        \fill[blue!35] (0,0) ellipse ({\aaxis} and {\baxis});
        %\draw[blue] (0,0) ellipse ({\aaxis} and {\baxis});
    \end{scope}
}

% Draw points last
\foreach \i in {1,...,3}{
    \fill (\i) circle (0.06);
}

\end{scope}
\end{tikzpicture}
}

% -------- Sequence of radii --------
\begin{center}
\foreach \r [count=\i] in {0,0.32,0.4,0.52,0.645,0.75}{
    \triangleellipses{\r}
    \ifnum\i<3
    \pgfmathsetmacro{\gap}{-0.1 + \r} 
    \hspace{\gap cm}
    \fi
    \ifnum\i>1
        \ifnum\i<6
            \pgfmathsetmacro{\gap}{-1 + \r} 
            \hspace{\gap cm}
        \fi
    \fi
}
\end{center}
\caption{A set of three points with growing ellipsoids around them (given by the Mahalanobis distance around the points), where the mutual intersection of the ellipsoids happens much earlier than their triple intersection.}
\label{fig_growing_triple}
\end{figure}
Thus, for each input point, its orientation matrix encodes its radius-$r$ ellipsoidal neighborhood.
Given a subset $S$ of the input points, the question is which is the smallest $r$ such that there is a common point in the radius-$r$ neighborhood of every element of $S$.
%, i.e., the intersection of the ellipsoids is non-empty. 
Since each point has an orientation matrix associated with it, and together they geometrically determine an ellipsoid, the question can be formulated as finding the minimal intersection radius of a collection of growing, non-homogeneous ellipsoids. 
The orientation matrices are assumed to be given. 
Once the MIR $r^*$ is known, one can normalize the problem by replacing every orientation matrix $A_i$ with $A_i/(r^*)^2$, so that the corresponding radius-$1$ ellipsoids intersect.
This is only a reparametrization, however, because the unknown scalar $r^*$ is precisely what we seek to compute.

The motivation for efficiently finding the MIR of growing ellipsoids comes from persistence theory, namely from the need for an efficient method to compute ellipsoidal \v Cech complexes, i.e., \v Cech complexes built from point-dependent ellipsoidal neighborhoods.
In turn, ellipsoidal \v Cech complexes are motivated by the study of manifold coverage for manifold learning. 
We will not dive into this topic in this work, but we briefly describe how we plan to use ellipsoidal \v Cech complexes for studying manifold coverage to motivate our study.
One may need to sample a manifold with a collection $C$ of points such that all points on the manifold are at a distance at most $\varepsilon$ from a point in $C$. 
To spot if and where this condition fails, one can compute the persistent homology of the \v Cech filtration of the sample: every point is a vertex, an edge is inserted when the two ellipsoids centered in its vertices have grown enough to touch, and a triangle enters when there is a triple intersection. 
When three edges are included but the corresponding triangle is not (as it would happen in the fourth and fifth step of \cref{fig_growing_triple}), there is a short bar in the persistent $1$-homology, i.e., a short-lived loop. 
One then simply goes to where in the sample the loop appeared, and adds a point in the middle of it, thus fixing the coverage. 
Or, having computed the MIR, one can adjust the value of $\varepsilon$ such that it is larger than most MIRs.
Beyond this motivation, we believe that finding the MIR for growing, non-homogeneous ellipsoids is interesting on its own from a purely computational geometry perspective. 

\begin{figure}
\centering
\newcommand{\triangleellipses}[1]{
\begin{tikzpicture}[baseline=(basepoint)]
\def\radius{#1}

\begin{scope}[blend mode=multiply]
\coordinate (1) at (0,0);
\coordinate (2) at (1,0);
\coordinate (3) at (0.5,0.88);

% A reference for baseline alignment
\coordinate (basepoint) at (1);

% Base orientation for ellipses
\pgfmathanglebetweenpoints
{\pgfpointanchor{1}{center}}
{\pgfpointanchor{2}{center}}
\edef\bottomangle{\pgfmathresult}

% same base orientation, slightly perturbed
\foreach \i/\delta in {1/13,2/-12,3/-3}{

    \pgfmathsetmacro{\sizefactor}{1 + 0.12*sin(10*\i)}
    \pgfmathsetmacro{\elong}{1.35}

    \pgfmathsetmacro{\aaxis}{\sizefactor*\elong*\radius}
    \pgfmathsetmacro{\baxis}{\sizefactor*(1/\elong)*\radius}

    \begin{scope}[shift={(\i)}, rotate=\bottomangle+40+\delta]

        \fill[blue!35] (0,0) ellipse ({\aaxis} and {\baxis});
        %\draw[blue] (0,0) ellipse ({\aaxis} and {\baxis});
    \end{scope}
}

% Draw points last
\foreach \i in {1,...,3}{
    \fill (\i) circle (0.06);
}
\end{scope}
\end{tikzpicture}
}

% -------- Sequence of radii --------

\begin{center}
\foreach \r [count=\i] in {0,0.32,0.4,0.52,0.645,0.75}{
    \triangleellipses{\r}
    \ifnum\i<3
    \pgfmathsetmacro{\gap}{-0.1 + \r} 
    \hspace{\gap cm}
    \fi
    \ifnum\i>1
        \ifnum\i<6
            \pgfmathsetmacro{\gap}{-1 + \r} 
            \hspace{\gap cm}
        \fi
    \fi
}
\end{center}
\caption{A set of three points with growing ellipsoids around them (given by the Mahalanobis distance around the points), where the mutual intersection of the ellipsoids happens at the same time as their triple intersection.}
\label{fig_growing_mutual}
\end{figure}

While several works have already touched on the topic of the intersection of ellipsoids, to the best of our knowledge, none dealt with the exact setting we require (see \cref{ssec:related_work} for a precise comparison). 
In detail, our setting is more general than the usual MIR problem for balls or homogeneous ellipsoids, as we do not assume that all ellipsoids in the collection have the same orientation matrix (see \cref{fig_growing_triple,fig_growing_mutual}). 
The lack of this assumption is an obstruction to the usual equivalence between finding the MIR of a collection of balls and finding the minimal enclosing ball (MEB) of the set of the centers of the balls (see \cref{ssec:diff_euclidean}). 

We find the MIR first by constructing a certificate using techniques from convex optimization. 
This certificate will take a set of centers and orientation matrices, compute a new ellipsoid that is guaranteed to cover the intersection, and then find the minimal value for which this ellipsoid is non-empty. 
This value will be the sought MIR (see \cref{ssec_convex}). 
Our certificate providing the radius is a generalization of the certificate for the intersection of two ellipsoids in \cite{robust2ellipsoids} (see \cref{ssec:related_work} for more details).
Then we will show how, with this certificate, we can phrase the problem as an LP-type problem (see \cref{ssec:lp}). 
Additionally, we express the MIR also as a second-order cone program (SOCP) in \cref{ssec_socp}. 
While this formulation is considerably shorter than the certificate we describe in \cref{ssec_convex}, it provides no explicit objective function nor clear geometric interpretation.
We ultimately plan to apply our method for \v Cech complexes for manifold learning, where the ambient dimension is very high and the number of ellipsoids to intersect is much lower, but our method still applies in lower ambient dimensions and for a large number of ellipsoids.

After having set the theoretical foundation of our method, we implemented it in \texttt{c++} using several different algorithms.
These algorithms are either direct solves (SLSQP \cite{NLopt,SLSQP}, the standard projected gradient descent (PGD), Cauchy \cite{chok2025convexoptimizationprobabilitysimplex}, and SOCP \cite[p. 156]{Boyd_Vandenberghe_2004} and \cite{alglib_library}), or LP-type algorithms (Clarkson \cite{clarkson} and Seidel \cite{seidel}).
We then tested the implementation on randomly generated ellipsoids, taking, for each computation, the mean over 30 tests. 
Each computation takes a fraction of a second. 
However, the long-term goal is to use the MIR computation as a subroutine of a larger algorithm, which would possibly be called hundreds, if not thousands, of times. 
Therefore, it was important to obtain methods that kept the computation in single-digit milliseconds in high ambient dimensions and for a large number of ellipsoids. 
%We showed that the direct solver SLSQP and the LP-Clarkson implementations kept these limits up to ambient dimension $50$ for $40$ ellipsoids (the maximum we computed). 
%In general, our experiments show that the direct solver SLSQP performed best.

The code, complete with the implementation and the experiment setup, is available at \cite{github_repo}. 

\subsection{Related work}\label{ssec:related_work}

The literature around ellipsoids contains several problems that are close to ours, but differ in the homogeneity assumptions, the number of ellipsoids, or the optimization objective. 
Intersections of ellipsoids and quadrics have been studied extensively, especially for concentric, coaxial, or otherwise structured families in $\R^d$ \cite{Medrano2023}; related algorithmic work also develops basic geometric operations for ellipsoids, including separation and enclosing routines \cite{pope:hal-04946526}. 
Other optimization work studies a fixed intersection of several ellipsoids, such as the problem of projecting a point onto that intersection \cite{LinHan2004}. 
Classical LP-type algorithms for smallest enclosing balls and ellipsoids \cite{ fischer_gaertner_kutz, GaertnerSchonherr,smallest_enclosing_welzl} also provide part of the algorithmic background for our later LP-type formulation. These works are adjacent to our setting, but they do not address the minimal radius at which a finite family of growing ellipsoids with different orientation matrices first acquires a common intersection.

For non-homogeneous ellipsoids, the exact pairwise literature we are aware of includes overlap tests for two hyperellipsoids \cite{robust2ellipsoids,6954724}, an algebraic separation criterion for two ellipsoids in $\R^3$ \cite{WANG2001531}, and a topology-determination strategy for computing the intersection curve of two ellipsoids in $\R^3$ \cite{algebraic_2ellipsoids}. We generalize to arbitrary $n$ some of the certificate ideas used in \cite{robust2ellipsoids} for $n=2$. However, those works test whether a fixed pair of ellipsoids overlaps, or compute the geometry of a fixed pairwise intersection; they do not compute the minimal intersection radius for a family of arbitrary size. 
The closest source to our convex-optimization viewpoint is \cite[\S 5.8.3, ``Intersection of ellipsoids'' example]{Boyd_Vandenberghe_2004}, where the authors construct an ellipsoid $E_\lambda$ covering the intersection of non-homogeneous ellipsoids via Lagrange duality. Their treatment gives a feasibility certificate for a fixed radius, whereas our goal is to optimize over the radius itself and then use the resulting certificate inside an LP-type formulation. 
Since our motivating application is the computation of ellipsoidal \v Cech complexes, a related but different construction given the Rips-type ellipsoid complex of \cite{PHellipsoids} cannot be adopted because it avoids the full common-intersection test that motivates the MIR computation.

Finding the MIR is also distinct from finding the Minimum Volume Intersection Covering Ellipsoid (MVICE), a problem whose natural primal is nonconvex but for which standard SDP, S-procedure, and bounding-ellipsoid relaxations are convex and equivalent \cite{relaxMVICE}. 
The MVICE problem takes fixed input ellipsoids and asks for a minimum-volume ellipsoid covering their intersection \cite{relaxMVICE}. 
In our problem, deciding whether the intersection is non-empty at a given radius and finding the first radius at which this happens are precisely the central tasks. 
Our covering ellipsoid $E_{\lambda,r}$ (see Eq.\eqref{Elambda}) is therefore a certificate-dependent cover of the intersection, not a volume-optimal cover. 
When the intersection is non-empty, $E_{\lambda,r}$ is feasible for the corresponding covering problem, but no set containment relation with the MVICE follows in general. 
A still different problem asks for the smallest ball intersecting every member of a family of compact convex bodies of fixed size \cite{zheng2025approximation}.

Another related line of work focuses on the Minimal Enclosing Ellipsoid (MEE), a classical problem studied for decades; see, for example, \cite{Behrend1938,Kumar2005MinimumVolume,Post_spanning_ellipsoid} and references therein. We use the MEE only as a point of comparison. As discussed in \cref{ssec:diff_euclidean}, the Euclidean equivalence between the intersection radius of growing balls and the minimal enclosing ball of their centers does not extend to non-homogeneous ellipsoids. Moreover, for finite point sets, the MEE formulation is sensitive to degeneracy and to whether one requires a full-dimensional enclosing ellipsoid; even when the MEE is well-posed, it is not equivalent to the MIR considered here.

To help summarize the various problems, we collect the main acronyms, some of which we redefine from the literature, since it can be challenging to differentiate between them:

\begin{itemize}
\item[] MIR$(M)$: Minimal intersection radius of a finite family $M$ of growing sets in $\R^d$.
\item[] MEB$(M)$ and MEE$(M)$: Minimal enclosing ball and ellipsoid of an object $M$ in $\R^d$, where ``minimal'' refers to the volume. 
In the case of MEE, whether $M$ is a (convex) region or a collection of points has a big impact on the well-definedness of the problem.
\item[] MVICE: Minimum Volume Intersection Covering Ellipsoid, i.e., the ellipsoid with the smallest volume covering the intersection of (convex) bodies (often, other ellipsoids).
\end{itemize}

\section{Minimal Intersection Radius}\label{sec:mir}

Throughout the work, the symbol  $S_{++}^d$ denotes the set of symmetric positive definite (PD) ($d\times d$)-matrices with entries in $\mathbb{R}$.
For the reader's convenience, the main symbols used in this section are collected in \cref{tab:notation} of \cref{app:notation}; each is defined in detail where it first appears.

\begin{definition}\label{def_ellipse_fixed_r}
An \define{ellipsoid} (or \define{ellipsoidal ball}) in $\R^d$ is the set
\[
E(r,\tilde c, A) \coloneqq \{x\in \R^d\colon (x-\tilde c)^\top A (x-\tilde c) \leqslant r^2\}\, ,
\]
where $r\geqslant 0$ and it is called the \define{radius}, $A\in S_{++}^d$ is a symmetric positive definite (PD) $d\times d$ matrix called the \define{orientation matrix}\footnote{$A$ is also called \define{precision matrix} or \define{inverse covariance} in the literature.}, and $\tilde c\in\R^d$ is the \define{center}. 
\end{definition}

For a geometric interpretation of ellipsoids, one can define $\norm{z}_A \coloneqq \sqrt{z^\top A z}$, i.e., as the Mahalanobis norm in $\mathbb{R}^d$. 
Then $E(r, \tilde c, A)$ is a closed ball of radius $r$ under the Mahalanobis distance with orientation matrix $A$.

In the above definition, the radius, orientation matrix, and center of an ellipsoid are all fixed. 
However, the geometric interpretation motivates the study of ellipsoids with varying radius: one can consider all points at a certain distance from a center, and then increase or decrease such distance, including or excluding points. 

\begin{definition}\label{def_ellipse_grow_r}
A \define{growing ellipsoid} in $\R^d$ is an ellipsoid $E(r,\tilde c, A)$
where $r$ varies in $[0,\infty)$.  
We denote a growing ellipsoid with $E_r(\tilde c, A)$. 
\end{definition}

The question we answer in this paper can be formulated as follows: 
\begin{itemize}
    \item[\hypertarget{prob}{\textbf{Q:}}] Given $n$ centers $\tilde c_1, \dotsc, \tilde c_n \in\R^d$ and orientation matrices $A_1,\dotsc, A_n \in S_{++}^d$, find
    $r^* = \inf \{r \geqslant 0 \colon \displaystyle\bigcap_{i=1}^n E_r(\tilde c_i, A_i) \neq \emptyset\}\,$,
    i.e., the smallest radius such that the intersection of $n$ non-homogeneous, growing ellipsoids in $\mathbb{R}^d$ is non-empty. 
\end{itemize}

We note that, for any $r>0$, we have
$E(r,\tilde c, A) = E(1,\tilde c, A/r^2)$, thus one could reformulate \hyperlink{prob}{\textbf{Q}} as finding the minimal rescaling factor of the orientation matrices such that the ellipsoids at radius $1$ have a common, non-empty intersection. 
However, as we found the original formulation easier to treat and efficient to implement, we did not pursue this alternative angle further.
\medskip

Before going into the details of the solution of \hyperlink{prob}{\textbf{Q}}, we recall two facts about ellipsoids that will be useful in our computation. 
To keep the work self-contained, we used the notation for growing ellipsoids, which we will maintain throughout. 
Moreover, we included the brief proofs of \cref{ellipse_as_quadratic_preimage,gradient_hessian_and_min_of_f} even if they are classical results.

\begin{lemma}\label{ellipse_as_quadratic_preimage}
	Let $A\in S_{++}^d$, $b\in\R^d$ and $c\in\R$. For each $x\in\R^d$, define $f(x)=x^\top A x + 2b^\top x + c$.
	Then the closed ellipsoidal ball centered at $\tilde c$ with orientation matrix $A$ can be written as the following preimage under $f$:
	$E_r(\tilde c,A)=f^{-1}((-\infty, 0]) = \{x\in\R^d :f(x)\leqslant 0\}$,
	provided $b = -A \tilde c \text{ and } c = \tilde c^\top A \tilde c-r^2$.
\end{lemma}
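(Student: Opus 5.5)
The plan is a direct algebraic verification: expand the quadratic form in the definition of $E_r(\tilde c,A)$ from \cref{def_ellipse_fixed_r} and match it term by term with $f$. Fix $x\in\R^d$ and $r\geqslant 0$. By definition, $x\in E_r(\tilde c,A)$ if and only if $(x-\tilde c)^\top A(x-\tilde c)-r^2\leqslant 0$. The goal is to show that the left-hand side is exactly $f(x)$ when $b=-A\tilde c$ and $c=\tilde c^\top A\tilde c-r^2$. The two sets $E_r(\tilde c,A)$ and $f^{-1}((-\infty,0])$ are then described by the same inequality, hence they coincide.

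The expansion uses bilinearity together with the symmetry of $A$:
\[
(x-\tilde c)^\top A(x-\tilde c)=x^\top Ax - x^\top A\tilde c - \tilde c^\top A x + \tilde c^\top A\tilde c .
\]
The only point needing care is merging the two cross terms. Since $\tilde c^\top A x$ is a scalar, it equals its own transpose $x^\top A^\top\tilde c$. Because $A$ is symmetric, this is $x^\top A\tilde c$. The same symmetry also gives $(A\tilde c)^\top x = \tilde c^\top A x$. The cross terms therefore combine to $-2\tilde c^\top A x = 2(-A\tilde c)^\top x = 2b^\top x$. Subtracting $r^2$ and absorbing the constant $\tilde c^\top A\tilde c - r^2$ into $c$ yields $f(x)$.

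There is no genuine obstacle here. The one step to state explicitly is the use of $A=A^\top$ to merge the cross terms. Without symmetry the linear coefficient would be $-\tfrac12(A+A^\top)\tilde c$ rather than $-A\tilde c$. Positive definiteness of $A$ is not needed for the set equality itself. It matters only for interpreting the set as a bounded ellipsoid, and for the later minimization in \cref{gradient_hessian_and_min_of_f}.
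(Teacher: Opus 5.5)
Your proposal is correct and is essentially the paper's own proof. You expand $(x-\tilde c)^\top A(x-\tilde c)-r^2$, use $A^\top=A$ to merge the cross terms into $2b^\top x$ with $b=-A\tilde c$, and absorb the constant into $c=\tilde c^\top A\tilde c-r^2$.
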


\begin{proof}
	Expanding $(x-\tilde c)^\top A (x-\tilde c)-r^2$ (using $A^\top=A$) shows it equals $x^\top A x+2b^\top x+c$ precisely when $b=-A\tilde c$ and $c=\tilde c^\top A\tilde c-r^2$.
\end{proof}

Alternatively, given $(A,b,c)$ we can solve for $\tilde c$ and $r$ via $\tilde c = -A^{-1} b$ and $r^2 = \tilde c^\top A \tilde c - c$.

\begin{lemma}\label{gradient_hessian_and_min_of_f}
	The gradient of $f$ is $\nabla f(x) = 2A x+2b$, the Hessian is $\nabla^2 f(x) = 2A \in S_{++}^d$, and the minimizer occurs at $x^*=-A^{-1} b$ with minimum $f(x^*) = c-b^\top A^{-1} b$.
\end{lemma}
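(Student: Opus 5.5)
The plan is to differentiate $f(x)=x^\top A x+2b^\top x+c$ directly and then use convexity to pin down the minimizer; throughout, $A\in S_{++}^d$ as in \cref{ellipse_as_quadratic_preimage}.

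\emph{Gradient.} I will write $f$ in coordinates as $f(x)=\sum_{j,k}A_{jk}x_jx_k+2\sum_j b_jx_j+c$. Differentiating with respect to $x_\ell$ gives
\[
\partial_\ell f(x)=\sum_k A_{\ell k}x_k+\sum_j A_{j\ell}x_j+2b_\ell .
\]
Since $A$ is symmetric, this equals $2(Ax)_\ell+2b_\ell$, so $\nabla f(x)=2Ax+2b$.

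\emph{Hessian.} Differentiating $\partial_\ell f$ once more in $x_m$ gives $2A_{\ell m}$. Hence $\nabla^2 f(x)=2A$, which lies in $S_{++}^d$ because $A$ does.

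\emph{Minimizer.} Since the Hessian is positive definite everywhere, $f$ is strictly convex, so any critical point is its unique global minimizer. Setting $\nabla f(x)=0$ gives $Ax=-b$. Because $A$ is invertible, this has the unique solution $x^*=-A^{-1}b$.

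As an alternative that avoids convexity theory, I can complete the square. Using $A^{-1}$ symmetric,
\[
f(x)=(x+A^{-1}b)^\top A(x+A^{-1}b)+c-b^\top A^{-1}b .
\]
The first term is nonnegative and vanishes exactly at $x=-A^{-1}b$, which gives the minimizer and the minimum value simultaneously.

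\emph{Minimum value.} Substituting $x^*=-A^{-1}b$ into $f$:
\[
f(x^*)=b^\top A^{-1}AA^{-1}b-2b^\top A^{-1}b+c=c-b^\top A^{-1}b .
\]

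There is no real obstacle here. The only points needing care are using the symmetry of $A$ (and of $A^{-1}$) to merge the two cross terms, and using $A\succ 0$ to guarantee invertibility and that the critical point is a global minimum.
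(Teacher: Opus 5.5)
Your proposal is correct and follows the same route as the paper: compute $\nabla f(x)=2Ax+2b$, solve $\nabla f=0$ using invertibility of $A$ to get $x^*=-A^{-1}b$, then substitute to obtain $c-b^\top A^{-1}b$. You go a bit further than the paper by explicitly justifying global minimality, either via strict convexity or via your completing-the-square identity (the same identity the paper later uses in the proof of \cref{covering_set_is_ellipse}).
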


\begin{proof}
	These follow from direct vector calculus: $\nabla f(x)=2Ax+2b=0$ gives the unique minimizer $x^*=-A^{-1}b$, and substituting yields $f(x^*)=c-b^\top A^{-1}b$.
\end{proof}

\subsection{MIR as a convex optimization problem}\label{ssec_convex}

We now expand on the discussion in \cite[p. 258--262]{Boyd_Vandenberghe_2004} to reformulate \hyperlink{prob}{\textbf{Q}} as a feasibility problem and solve it as a convex optimization. 
\medskip 

For each $r\geqslant 0$, write $\gamma_i(r) = \tilde c_i^\top A_i \tilde c_i - r^2$, $b_i = -A_i\tilde c_i$, and $f_i(x; r)= x^\top A_i x + 2b_i^\top x + \gamma_i(r)$.
Then, by \cref{ellipse_as_quadratic_preimage}, for every $i=1,2,\dotsc, n$ and for each $r\geqslant 0$ fixed, we can rewrite a growing ellipsoid as 
$E_r(\tilde c_i, A_i) = f_i(\cdot;r)^{-1} ((-\infty, 0])\,$.

Thus, for each $r\geqslant 0$, 
\[
\displaystyle\bigcap_{i=1}^n E_r(\tilde c_i, A_i) \neq \emptyset
\iff
\underbrace{f_i(x;r) = x^\top A_i x + 2b_i^\top x + \gamma_i(r) \leqslant 0 \ \ \ \forall \, 1\leqslant i\leqslant n}_{\hypertarget{syst}{(\star)}} \, .
\]
The right-hand side \hyperlink{syst}{$(\star)$} is a system of non-strict inequalities giving a feasibility problem, and we solve it using the method of Lagrange multipliers: 

\[
\mathcal{L}_r(x, \lambda) = \sum_{i=1}^n \lambda_i f_i(x; r) = x^\top A(\lambda) x + 2b(\lambda)^\top x + \gamma(\lambda;r)
\]
for any $x\in\R^d$ and $\lambda\in \R^n$, where $A(\lambda) = \sum_{i=1}^n \lambda_i A_i$, $b(\lambda) = \sum_{i=1}^n \lambda_i b_i$, and, since $\gamma_i(r)=\tilde c_i^\top A_i \tilde c_i - r^2$,
\[
\gamma(\lambda;r) = \sum_{i=1}^n \lambda_i \gamma_i(r) = \sum_{i=1}^n \lambda_i \tilde c_i^\top A_i \tilde c_i - r^2 \sum_{i=1}^n \lambda_i\, .
\]
Using Lemma \ref{gradient_hessian_and_min_of_f}, the dual function is, by definition,
\begin{align*}
	g(\lambda;r) &= \inf_{x\in\R^d} \left(x^\top A(\lambda) x + 2b(\lambda)^\top x + \gamma(\lambda;r)\right)\\
	& = \begin{cases*}
		\gamma(\lambda;r) - b(\lambda)^\top A(\lambda)^\dagger b(\lambda), & $A(\lambda) \succeq 0, \, b(\lambda) \in\mathcal{R}(A(\lambda))$\\
		-\infty & \text{ otherwise}
	\end{cases*} \, ,
\end{align*}
where $\mathcal{R}$ is the range of a linear transformation. 

For arbitrary $\lambda\in\R^n$, we are not guaranteed that $A(\lambda)$ is positive definite, so we must use the Moore-Penrose inverse $A(\lambda)^\dagger$ above. 
If $\lambda_i \geqslant 0$ and $\lambda \neq 0$ (as it will be the case in our setting), then $A(\lambda)=\sum_{i=1}^n \lambda_i A_i$ is a non-negative combination of the PD matrices $A_i$ with at least one positive weight, hence $A(\lambda) \in S_{++}^d$, and the dual function simplifies to
$g(\lambda;r) = \gamma(\lambda;r) - b(\lambda)^\top A(\lambda)^{-1} b(\lambda)\,$.

For easing the notation, we set
\begin{equation}\label{eq_def_g}
    g(\lambda;r)\coloneqq \gamma(\lambda;r) - b(\lambda)^\top A(\lambda)^{-1} b(\lambda) \, .
\end{equation}

We now derive a feasibility certificate for \hyperlink{syst}{$(\star)$} via Lagrangian strong duality applied to an epigraph reformulation.
For each fixed $r\geqslant 0$, define
\begin{equation}\label{eq_primal_epigraph}
    p(r) \coloneqq \inf_{\substack{x\in\R^d\\ t\in\R}} \{t \colon f_i(x; r) \leqslant t,\ i=1,2,\dotsc, n\}
    = \inf_{x\in\R^d} \max_{1\leqslant i\leqslant n} f_i(x; r) \, .
\end{equation}
The intersection $\bigcap_{i=1}^n E_r(\tilde c_i, A_i)$ is non-empty if and only if there exists $x\in\R^d$ with $\max_i f_i(x; r) \leqslant 0$, that is, if and only if $p(r)\leqslant 0$.

Let $\Delta^n \coloneqq \{\lambda\in\R^n : \lambda_i \geqslant 0 \text{ for every } i,\; \sum_{i=1}^n \lambda_i = 1\}$ denote the \define{probability simplex}, i.e., the standard $n$-simplex in $\R^n$ with vertices the standard basis vectors $e_i=(0,\dotsc,0,\underset{i}{1},0,\dotsc,0)$.

\begin{lemma}\label{strong_duality_for_p}
    For every $r\geqslant 0$, the infimum in~\eqref{eq_primal_epigraph} is attained and
    \begin{equation}\label{eq_strong_duality}
        p(r) = \max_{\lambda\in\Delta^n} g(\lambda; r) \, .
    \end{equation}
\end{lemma}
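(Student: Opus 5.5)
We prove \cref{strong_duality_for_p} by treating $p(r)=\inf_x \max_i f_i(x;r)$ as a convex--concave saddle problem and invoking a minimax theorem, rather than checking Slater's condition for the epigraph problem directly.

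\textbf{Step 1: rewrite as a minimax problem.} Since a maximum of finitely many numbers equals the maximum of their convex combinations,
\[
\max_{1\leqslant i\leqslant n} f_i(x;r)=\max_{\lambda\in\Delta^n}\sum_{i=1}^n\lambda_i f_i(x;r)=\max_{\lambda\in\Delta^n}\mathcal{L}_r(x,\lambda).
\]
Hence $p(r)=\inf_{x}\max_{\lambda\in\Delta^n}\mathcal{L}_r(x,\lambda)$.

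\textbf{Step 2: attainment of the primal infimum.} The function $F(x)=\max_i f_i(x;r)$ is continuous and convex. It is also coercive, because $F(x)\geqslant f_1(x;r)\geqslant \lambda_{\min}(A_1)\|x\|^2-2\|b_1\|\|x\|+\gamma_1(r)\to\infty$. So its sublevel sets are compact and the infimum is attained at some $x^*$. In the epigraph form the pair $(x^*,F(x^*))$ is then optimal.

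\textbf{Step 3: exchange the inf and the max.} $\mathcal{L}_r(\cdot,\lambda)$ is convex in $x$, since every $A_i$ is PD and $\lambda\geqslant 0$. It is linear, hence concave, in $\lambda$. The set $\Delta^n$ is compact and convex. Sion's minimax theorem, or equivalently von Neumann's in this continuous setting, therefore gives
\[
\inf_x\max_{\lambda\in\Delta^n}\mathcal{L}_r=\max_{\lambda\in\Delta^n}\inf_x\mathcal{L}_r .
\]
The outer max is attained because $\lambda\mapsto\inf_x\mathcal{L}_r(x,\lambda)$ is an infimum of affine functions. It is therefore upper semicontinuous, and an upper semicontinuous function on a compact set attains its maximum.

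As a backup route, one can argue through the epigraph problem instead. It satisfies Slater's condition trivially: take any $x$ and $t>\max_i f_i(x;r)$. Standard Lagrangian strong duality then applies. The multipliers $\mu$ must satisfy $\sum_i\mu_i=1$, since otherwise the infimum over $t$ is $-\infty$. This forces $\mu\in\Delta^n$ and recovers the same dual.

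\textbf{Step 4: identify the dual function.} For $\lambda\in\Delta^n$ we have $\lambda\neq0$, so $A(\lambda)\in S_{++}^d$. By \cref{gradient_hessian_and_min_of_f}, $\inf_x\mathcal{L}_r(x,\lambda)=\gamma(\lambda;r)-b(\lambda)^\top A(\lambda)^{-1}b(\lambda)=g(\lambda;r)$, which is exactly \eqref{eq_def_g}. This gives \eqref{eq_strong_duality}.

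The main obstacle is Step 3: justifying the minimax exchange and the attainment of the maximum. Using compactness of $\Delta^n$ in Sion's theorem is what makes both work without any further constraint qualification. I would also check separately that $g(\cdot;r)$ is continuous on all of $\Delta^n$, including its boundary. This holds because $A(\lambda)$ stays PD there, so $A(\lambda)^{-1}$ is continuous in $\lambda$, and continuity makes the "max" in \eqref{eq_strong_duality} genuinely attained.
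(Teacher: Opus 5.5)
Your argument is correct. Your ``backup route'' is exactly the paper's proof. The paper verifies Slater's condition for the epigraph problem and writes $L(x,t,\lambda)=(1-\sum_i\lambda_i)\,t+\mathcal{L}_r(x,\lambda)$. It notes that the infimum over $t$ forces $\lambda\in\Delta^n$, identifies the dual function with $g(\cdot;r)$ via \cref{gradient_hessian_and_min_of_f}, and gets attainment of the dual maximum separately from continuity of $g(\cdot;r)$ on the compact set $\Delta^n$.

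Your primary route is genuinely different. The simplex comes directly from the identity $\max_i f_i(x;r)=\max_{\lambda\in\Delta^n}\mathcal{L}_r(x,\lambda)$, not from eliminating the auxiliary variable $t$. Sion's theorem then swaps $\inf_x$ and $\max_\lambda$ with no constraint qualification. Compactness of $\Delta^n$ gives both the equality and the attainment of the outer maximum at once.

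The paper's route stays inside the standard Lagrangian-duality framework that the rest of \cref{ssec_convex} builds on: the dual function, the covering ellipsoid $E_{\lambda,r}$, and the certificate \eqref{eq_cert_in_iff}. Yours is shorter and shows more transparently why the multipliers live on the probability simplex.

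Two small citation points:
\begin{itemize}
\item Sion's theorem is usually stated with the compact set on the minimizing side. Apply it to $-\mathcal{L}_r$, or quote the symmetric form, so that compactness sits on the $\lambda$-side.
\item Von Neumann's minimax theorem in its classical form requires both sets to be compact. That fails here because $x$ ranges over all of $\R^d$, so it is not interchangeable with Sion's theorem.
\end{itemize}
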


\begin{proof}
    The objective $t$ and the constraints $f_i(x; r) - t \leqslant 0$ are convex in $(x,t)$ (each $f_i(\cdot; r)$ is a convex quadratic since $A_i\in S_{++}^d$).
    Slater's condition holds: fixing any $x_0\in\R^d$ and choosing $t_0 > \max_i f_i(x_0; r)$ gives a strictly feasible pair.
    Moreover, $\max_i f_i(\cdot; r)$ is coercive (each $f_i(x; r)\to\infty$ as $\norm{x}\to\infty$ by positive-definiteness of $A_i$), so the infimum in~\eqref{eq_primal_epigraph} is attained.

    The Lagrangian, with multipliers $\lambda_i \geqslant 0$ for the constraints $f_i(x; r) - t \leqslant 0$, is
    \[
        L(x, t, \lambda)
        = t + \sum_{i=1}^n \lambda_i\bigl(f_i(x; r) - t\bigr)
        = \Bigl(1 - \textstyle\sum_{i=1}^n \lambda_i\Bigr)\, t + \mathcal{L}_r(x, \lambda) \, .
    \]
    The infimum over $t\in\R$ is $-\infty$ unless $\sum_{i=1}^n \lambda_i = 1$, in which case $L$ no longer depends on $t$ and reduces to $\mathcal{L}_r(x, \lambda)$.
    Minimizing then over $x\in\R^d$ gives, for $\lambda\in\Delta^n$,
    $\inf_x \mathcal{L}_r(x, \lambda) = g(\lambda; r)$
    by \cref{gradient_hessian_and_min_of_f} applied to the quadratic $\mathcal{L}_r(\,\cdot\,, \lambda)$ (whose Hessian $2A(\lambda)$ is positive definite since $\lambda\in\Delta^n$).
    Hence the dual function of the epigraph problem~\eqref{eq_primal_epigraph} equals $g(\,\cdot\,; r)$ on its effective domain $\Delta^n$ and $-\infty$ elsewhere, so the dual problem is $\sup_{\lambda\in\Delta^n} g(\lambda; r)$.
    Slater's condition implies strong duality, so $p(r) = \sup_{\lambda\in\Delta^n} g(\lambda; r)$; compactness of $\Delta^n$ together with continuity of $g(\,\cdot\,; r)$ on $\Delta^n$ (the matrix $A(\lambda)$ stays positive definite over $\Delta^n$, so $A(\lambda)^{-1}$ is continuous in $\lambda$) makes the supremum a maximum.
\end{proof}

Combining the feasibility equivalence $\bigcap_i E_r(\tilde c_i, A_i)\neq\emptyset \iff p(r)\leqslant 0$ with~\eqref{eq_strong_duality} yields the certificate of intersection
\begin{equation}\label{eq_cert_in_iff}
    \displaystyle\bigcap_{i=1}^n E_r(\tilde c_i, A_i)\neq\emptyset
    \ \iff\
    \max_{\lambda\in\Delta^n} g(\lambda; r) \leqslant 0 \, .
\end{equation}

For $\lambda\in\Delta^n$, the value $g(\lambda; r)$ has a geometric interpretation: define the ellipsoid
\begin{equation}\label{Elambda}
    E_{\lambda, r} \coloneqq \{x\in\R^d\colon  \mathcal{L}_r(x,\lambda) \leqslant 0\} \, ,
\end{equation}
which contains the intersection of the input ellipsoids $\bigcap_{i=1}^n E_r(\tilde c_i, A_i)$.
Indeed, for any $\lambda\in\Delta^n$, if $f_i(x; r) \leqslant 0$ for all $i=1,2,\dotsc,n$, then $\mathcal{L}_r(x, \lambda) = \sum_i \lambda_i f_i(x; r) \leqslant 0$, so $E_{\lambda, r}\supseteq \bigcap_{i=1}^n E_r(\tilde c_i, A_i)$.

Note that, in general, when the latter intersection contains more than a single point, $E_{\lambda, r}$ is \emph{not} the minimal ellipsoid containing said intersection.
However, this is not a requirement for the solution of \hyperlink{prob}{\textbf{Q}}: as we will see shortly, to solve \hyperlink{prob}{\textbf{Q}}, we simply need to identify when $E_{\lambda, r}$ contains exactly one point, in which case $\bigcap_{i=1}^n E_r(\tilde c_i, A_i)$ also contains exactly one point and we found the desired MIR.
\medskip

While the discussion so far has largely been along the lines of \cite{Boyd_Vandenberghe_2004}, what follows is our original contribution, generalizing to an arbitrary number $n$ of ellipsoids what was done in \cite{robust2ellipsoids} for $n=2$.

\begin{proposition}\label{covering_set_is_ellipse}
	Let $\lambda\succeq 0$ and $\lambda\neq 0$. The set $E_{\lambda, r}$ is a possibly empty or degenerate Mahalanobis closed ball with center $m(\lambda) = -A(\lambda)^{-1} b(\lambda)$, orientation matrix $A(\lambda)$, and formal squared radius
    $\epsilon_r(\lambda)^2 = m(\lambda)^\top A(\lambda)m(\lambda)-\gamma(\lambda;r)=-g(\lambda;r)\,$.
    In particular,
    $E_{\lambda, r} = \{y\in\R^d\colon \norm{y-m(\lambda)}_{A(\lambda)}^2 \leqslant \epsilon_r(\lambda)^2\}\,$.
    Further, for any $\lambda\succeq 0$ and $\lambda\neq0$, we have
 	\[
    \displaystyle\bigcap_{i=1}^n E_r(\tilde c_i, A_i) \subseteq E_{\lambda, r} \subseteq \displaystyle\bigcup _{i=1}^n E_r(\tilde c_i, A_i)\, .
    \]
\end{proposition}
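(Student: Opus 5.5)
The plan is to treat $\mathcal{L}_r(\cdot,\lambda)$ as a single quadratic of the form in \cref{ellipse_as_quadratic_preimage}, reading off its center and radius by completing the square, and then to obtain both inclusions from the fact that it is a nonnegative combination of the $f_i$.

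\textbf{Step 1: the ball description.} Since $\lambda\succeq 0$ and $\lambda\neq 0$, the matrix $A(\lambda)=\sum_i\lambda_iA_i$ is a nonnegative combination of PD matrices with at least one positive weight. Hence $A(\lambda)\in S_{++}^d$, and $m(\lambda)=-A(\lambda)^{-1}b(\lambda)$ is well defined. Completing the square (the computation behind \cref{ellipse_as_quadratic_preimage}, read backwards with $A=A(\lambda)$, $b=b(\lambda)$, $c=\gamma(\lambda;r)$) gives
\[
\mathcal{L}_r(x,\lambda)=(x-m(\lambda))^\top A(\lambda)(x-m(\lambda)) - \bigl(m(\lambda)^\top A(\lambda)m(\lambda)-\gamma(\lambda;r)\bigr).
\]
Next observe that $m(\lambda)^\top A(\lambda)m(\lambda)=b(\lambda)^\top A(\lambda)^{-1}b(\lambda)$. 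By \eqref{eq_def_g}, the bracketed term is therefore $-g(\lambda;r)=\epsilon_r(\lambda)^2$. This yields the stated description of $E_{\lambda,r}$. It is empty when $\epsilon_r(\lambda)^2<0$ and a single point when $\epsilon_r(\lambda)^2=0$; this is why the radius is called \emph{formal}.

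\textbf{Step 2: the left inclusion.} This is the argument already given after \eqref{Elambda}. If $f_i(x;r)\le 0$ for all $i$, then $\mathcal{L}_r(x,\lambda)=\sum_i\lambda_if_i(x;r)\le 0$, because every $\lambda_i\ge0$. No normalization of $\lambda$ is needed.

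\textbf{Step 3: the right inclusion.} Argue by contrapositive. Suppose $x\notin\bigcup_iE_r(\tilde c_i,A_i)$. Then $f_i(x;r)>0$ for every $i$. Since $\lambda_i\ge 0$ with some $\lambda_j>0$,
\[
\mathcal{L}_r(x,\lambda)=\sum_i\lambda_if_i(x;r)\ge\lambda_jf_j(x;r)>0,
\]
so $x\notin E_{\lambda,r}$.

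The only point needing care is this strict positivity, which uses $\lambda\neq0$; the rest is bookkeeping. I expect no genuine obstacle. The main thing to check is that the identity $\epsilon_r(\lambda)^2=-g(\lambda;r)$ matches the definition of $g$ without the simplex normalization. This holds because \eqref{eq_def_g} is stated for general $\lambda$ with $A(\lambda)$ invertible.
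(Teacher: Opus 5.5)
Your proposal is correct and follows essentially the same route as the paper. Both complete the square around $m(\lambda)=-A(\lambda)^{-1}b(\lambda)$, use $m(\lambda)^\top A(\lambda)m(\lambda)=b(\lambda)^\top A(\lambda)^{-1}b(\lambda)$ to get $\epsilon_r(\lambda)^2=-g(\lambda;r)$, take the left inclusion from $\lambda_i\geqslant 0$, and take the right inclusion from strict positivity of $\sum_i\lambda_i f_i(x;r)$ when some $\lambda_j>0$ (you phrase it as a contrapositive, the paper as a contradiction).
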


\begin{proof}
	Since $\lambda\succeq 0$ and $\lambda\neq 0$, the matrix
	$A(\lambda)=\sum_i\lambda_iA_i$ is positive definite, and $m(\lambda)$ is well-defined.
	For readability, write $A=A(\lambda)$, $b=b(\lambda)$, $\gamma=\gamma(\lambda;r)$, and $m=m(\lambda)$.
	From $m=-A^{-1}b$, we have $b=-Am$. Completing the square gives, for any $y\in\R^d$,
	\begin{align*}
	\mathcal{L}_r(y,\lambda)
	=y^\top Ay+2b^\top y+\gamma=(y-m)^\top A(y-m)+\gamma-m^\top Am.
	\end{align*}
	Moreover, $m^\top Am=b^\top A^{-1}b$, hence $\gamma-m^\top Am=g(\lambda;r)$.
	Therefore
	\[
	E_{\lambda,r}
	=\left\{y\in\R^d\colon \norm{y-m(\lambda)}_{A(\lambda)}^2\leqslant -g(\lambda;r)\right\},
	\]
	which is the claimed description, with $\epsilon_r(\lambda)^2=-g(\lambda;r)$.

	For the first containment, let $y\in\cap_i E_r(\tilde c_i,A_i)$. Then $f_i(y;r)\leqslant 0$ for every $i$, and since $\lambda_i\geqslant 0$, we have
$\mathcal{L}_r(y,\lambda)=\sum_{i=1}^n\lambda_i f_i(y;r)\leqslant 0$.
	Thus $y\in E_{\lambda,r}$.
	For the second containment, let $y\in E_{\lambda,r}$. If $y\notin\cup_iE_r(\tilde c_i,A_i)$, then $f_i(y;r)>0$ for every $i$. Since $\lambda\succeq0$ and $\lambda\neq0$, at least one $\lambda_i$ is positive, and it follows that
$\mathcal{L}_r(y,\lambda)=\sum_{i=1}^n\lambda_i f_i(y;r)>0$,
	contradicting $y\in E_{\lambda,r}$. Hence $y\in\displaystyle\bigcup_iE_r(\tilde c_i,A_i)$.
\end{proof}

\begin{corollary}\label{classification_of_covering}
	Let $\lambda\succeq0$ and $\lambda\neq0$.
    Then the following hold:
	\begin{enumerate}
		\item If $g(\lambda;r)<0$ then $E_{\lambda, r}$ is a non-degenerate ellipsoid;
		\item If $g(\lambda;r)=0$ then $E_{\lambda, r}=\{m(\lambda)\}$;
		\item If $g(\lambda;r)>0$ then $E_{\lambda, r}=\emptyset$.
	\end{enumerate}
\end{corollary}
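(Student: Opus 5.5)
The corollary follows directly from the description of $E_{\lambda,r}$ in \cref{covering_set_is_ellipse}. That proposition gives
\[
E_{\lambda,r}=\{y\in\R^d\colon \norm{y-m(\lambda)}_{A(\lambda)}^2\leqslant -g(\lambda;r)\},
\]
where $A(\lambda)\in S_{++}^d$ because $\lambda\succeq 0$ and $\lambda\neq 0$. Hence $\norm{\cdot}_{A(\lambda)}$ is a genuine norm: $\norm{z}_{A(\lambda)}^2=z^\top A(\lambda) z\geqslant 0$, with equality if and only if $z=0$. The plan is to split on the sign of $-g(\lambda;r)$, which plays the role of the squared radius $\epsilon_r(\lambda)^2$.

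\textbf{Case 3.} If $g(\lambda;r)>0$, the right-hand side $-g(\lambda;r)$ is negative. Since the left-hand side is non-negative, no $y$ satisfies the inequality, so $E_{\lambda,r}=\emptyset$.

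\textbf{Case 2.} If $g(\lambda;r)=0$, the condition reads $(y-m(\lambda))^\top A(\lambda)(y-m(\lambda))\leqslant 0$. By positive definiteness this forces $y=m(\lambda)$. Conversely, $m(\lambda)$ satisfies the inequality with equality. Hence $E_{\lambda,r}=\{m(\lambda)\}$.

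\textbf{Case 1.} If $g(\lambda;r)<0$, set $\rho\coloneqq\sqrt{-g(\lambda;r)}>0$. Then $E_{\lambda,r}=E(\rho,m(\lambda),A(\lambda))$ in the sense of \cref{def_ellipse_fixed_r}, with strictly positive radius and a PD orientation matrix, so it is a non-degenerate ellipsoid. To make ``non-degenerate'' concrete, I would show that $E_{\lambda,r}$ has non-empty interior. Factor $A(\lambda)=L L^\top$ with $L$ invertible, for instance via Cholesky. The affine map $y\mapsto L^\top(y-m(\lambda))$ then sends $E_{\lambda,r}$ bijectively onto the closed Euclidean ball of radius $\rho$ centered at the origin. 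That ball is full-dimensional, so its preimage under the affine homeomorphism is full-dimensional as well.

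There is no real obstacle here. The only point requiring care is fixing what ``non-degenerate'' means, which I take to be positive radius, equivalently non-empty interior. With that settled, all three cases follow immediately from the completed-square form in \cref{covering_set_is_ellipse}.
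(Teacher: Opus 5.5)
Your proof is correct and follows the paper's argument: both use the completed-square description $E_{\lambda,r}=\{y\colon \norm{y-m(\lambda)}_{A(\lambda)}^2\leqslant -g(\lambda;r)\}$ from \cref{covering_set_is_ellipse}, use the positive definiteness of $A(\lambda)$, and split into cases on the sign of $g(\lambda;r)$. Your Cholesky argument in Case 1, showing that the ellipsoid has non-empty interior, is a valid extra detail that the paper leaves implicit.
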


\begin{proof}
	By \cref{covering_set_is_ellipse},
	$E_{\lambda,r}=\left\{y\in\R^d\colon \norm{y-m(\lambda)}_{A(\lambda)}^2\leqslant -g(\lambda;r)\right\}$.
	Since $A(\lambda)$ is positive definite, the left-hand side of the inequality is non-negative and vanishes only at $y=m(\lambda)$.
	If $g(\lambda;r)<0$, then $-g(\lambda;r)>0$, so the set is a non-degenerate ellipsoid.
	If $g(\lambda;r)=0$, then the inequality forces $y=m(\lambda)$.
	If $g(\lambda;r)>0$, then the right-hand side is negative, so no point can satisfy the inequality.
\end{proof}

\cref{covering_set_is_ellipse} and \cref{classification_of_covering} are the generalizations of Propositions 1 and 2 in \cite{robust2ellipsoids}, respectively. 
We note that the maximizers of $g(\lambda;r)$ on $\Delta^n$ do not depend on $r$.
Indeed, we can rewrite $g$ as
$g(\lambda; r) = \sum_{i=1}^n \lambda_i \tilde c_i^\top A_i \tilde c_i - b(\lambda)^\top A(\lambda)^{-1} b(\lambda) -r^2 \,$,
where the $r$-dependent term is constant in $\lambda$.
Therefore, we can simply focus on the function
\[
h(\lambda)\coloneqq \sum_{i=1}^n \lambda_i \tilde c_i^\top A_i \tilde c_i - b(\lambda)^\top A(\lambda)^{-1} b(\lambda) = a^\top \lambda -m(\lambda)^\top A(\lambda) m(\lambda) \, ,
\]
where $a_i = \tilde c_i^\top A_i \tilde c_i=\norm{\tilde c_i}_{A_i}^2$ and we used $b(\lambda)^\top A(\lambda)^{-1} b(\lambda)=m(\lambda)^\top A(\lambda) m(\lambda)$, the center of $E_{\lambda, r}$ from \cref{covering_set_is_ellipse}.
The formulation of $h(\lambda)$ can be further modified as the next lemma shows, to obtain a form that will be useful in \cref{derivatives_of_objective_function,obj_func_convex}. 

\begin{lemma}\label{negative_obj_function_is_convex_combination_of_mahadistances}
	The function $h(\lambda)$ defined above can be written as
    $h(\lambda) = \sum_{i=1}^n \lambda_i \norm{\tilde c_i - m(\lambda)}_{A_i}^2$.
\end{lemma}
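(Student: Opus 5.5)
The plan is to expand the right-hand side and compare it with the expression $h(\lambda)=a^\top\lambda - m(\lambda)^\top A(\lambda)m(\lambda)$ given just before the statement. The domain is that of \cref{covering_set_is_ellipse}, namely $\lambda\succeq 0$ and $\lambda\neq 0$, so that $A(\lambda)$ is positive definite and $m(\lambda)=-A(\lambda)^{-1}b(\lambda)$ is well defined. We write $m=m(\lambda)$ for short.

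For each $i$, using the symmetry of $A_i$, we expand
\[
\norm{\tilde c_i - m}_{A_i}^2 = \tilde c_i^\top A_i\tilde c_i - 2\,m^\top A_i\tilde c_i + m^\top A_i m .
\]
We then multiply by $\lambda_i$ and sum over $i$, treating the three resulting terms separately.
\begin{itemize}
\item The first term is $\sum_i\lambda_i\tilde c_i^\top A_i\tilde c_i = a^\top\lambda$.
\item The third term is $m^\top\bigl(\sum_i\lambda_i A_i\bigr)m = m^\top A(\lambda)m$.
\item For the middle term, recall $b_i=-A_i\tilde c_i$, so $\sum_i\lambda_i A_i\tilde c_i = -b(\lambda)$. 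Since $b(\lambda)=-A(\lambda)m$ by definition of $m$, this sum equals $A(\lambda)m$. Hence the middle term is $-2\,m^\top A(\lambda)m$.
\end{itemize}

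Adding the three contributions gives
\[
a^\top\lambda - 2\,m^\top A(\lambda)m + m^\top A(\lambda)m = a^\top\lambda - m^\top A(\lambda)m ,
\]
which is exactly the second expression for $h(\lambda)$ stated before the lemma. That expression already used the identity $b(\lambda)^\top A(\lambda)^{-1}b(\lambda)=m^\top A(\lambda)m$, which follows from substituting $b(\lambda)=-A(\lambda)m$.

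There is no real obstacle here. The only step requiring care is the sign bookkeeping in the cross term, that is, the identity $\sum_i\lambda_iA_i\tilde c_i=A(\lambda)m$. As a sanity check, this identity also says that $m$ is the stationary point of $\sum_i\lambda_i\norm{x-\tilde c_i}_{A_i}^2$, i.e., a weighted Mahalanobis barycenter of the centers.
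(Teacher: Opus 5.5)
Your proposal is correct and follows the paper's proof: you expand $\norm{\tilde c_i - m}_{A_i}^2$, weight by $\lambda_i$, sum, and simplify the cross term using $\sum_i \lambda_i A_i \tilde c_i = A(\lambda)m$, which comes from $b_i=-A_i\tilde c_i$ and $m(\lambda)=-A(\lambda)^{-1}b(\lambda)$. Your sign bookkeeping in that cross term is right, and the result matches $h(\lambda)=a^\top\lambda - m(\lambda)^\top A(\lambda)m(\lambda)$.
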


\begin{proof}
	Expanding $(\tilde c_i-m)^\top A_i (\tilde c_i - m)$, multiplying by $\lambda_i$, and summing gives
	\begin{align*}
		\sum_{i=1}^n \lambda_i (\tilde c_i-m)^\top A_i (\tilde c_i - m) &= a^\top \lambda -2 \sum_{i=1}^n \lambda_i \tilde c_i^\top A_i m(\lambda)+ m(\lambda)^\top A(\lambda) m(\lambda)\\
		& = a^\top \lambda -m(\lambda)^\top A(\lambda) m(\lambda) = h(\lambda),
	\end{align*}
	where the cross term simplifies via $\sum_{i=1}^n \lambda_i \tilde c_i^\top A_i m(\lambda)=m(\lambda)^\top A(\lambda)m(\lambda)$, which follows from the definitions $b_i=-A_i \tilde c_i$, $b(\lambda)=\sum_{i=1}^n \lambda_i b_i$, and $m(\lambda) = -A(\lambda)^{-1} b(\lambda)$.
\end{proof}

To find the maximum of $h(\lambda)$, which will give us the maximum of $g(\lambda;r)$ on $\Delta^n$, which, in turn, by \cref{classification_of_covering}, \eqref{eq_def_g}, and \eqref{eq_cert_in_iff}, will give us the sought MIR, we convert the question into a minimization problem:
\[
\min_{\lambda \in \Delta^n} F(\lambda) =
\min_{\lambda \in \Delta^n} \underbrace{m(\lambda)^\top A(\lambda) m(\lambda)-a^\top \lambda}_{-h(\lambda)} \, .
\]
We first collect some computations regarding the objective function $F=-h$.

\begin{lemma}\label{derivatives_of_objective_function}
	The gradient vector of $F$ is given by the entries $j=1,2,\dotsc, n$, 
	\[\pder[F]{\lambda_j} = -\norm{\tilde c_j - m(\lambda)}_{A_j}^2 \leqslant 0.\]
	The Hessian matrix of $F$ is
	\[\pder[^2 F]{\lambda_i \lambda_j} = 2 \langle w_i(\lambda), w_j(\lambda)\rangle_{A(\lambda)^{-1}},\]
	where $w_i(\lambda) = A_i(m(\lambda)-\tilde c_i)$, and $\langle x, y\rangle_A = x^\top A y$ for any $x,y$ vectors and $A$ positive definite matrix.
\end{lemma}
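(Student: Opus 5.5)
The plan is to differentiate $F(\lambda) = m(\lambda)^\top A(\lambda) m(\lambda) - a^\top\lambda$ on the open cone $\{\lambda\succ 0\}$ (or any open neighborhood of $\Delta^n$ on which $A(\lambda)$ is positive definite). Throughout we use the identities $A(\lambda)m(\lambda) = -b(\lambda) = \sum_i \lambda_i A_i\tilde c_i$ and $m^\top A m = b^\top A^{-1} b$.

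For the gradient, write $F(\lambda) = b(\lambda)^\top A(\lambda)^{-1} b(\lambda) - a^\top\lambda$. The derivatives of the ingredients are
\[
\partial_j b = b_j = -A_j\tilde c_j, \qquad \partial_j A = A_j, \qquad \partial_j A^{-1} = -A^{-1}A_jA^{-1}.
\]
Differentiating the quadratic form then gives
\[
\partial_j F = 2 b_j^\top A^{-1} b - b^\top A^{-1}A_jA^{-1} b - a_j .
\]
Substituting $A^{-1}b = -m$, this equals $2\tilde c_j^\top A_j m - m^\top A_j m - \tilde c_j^\top A_j\tilde c_j = -(\tilde c_j-m)^\top A_j(\tilde c_j-m)$. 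This is $-\norm{\tilde c_j-m(\lambda)}_{A_j}^2\leqslant 0$ by positive definiteness of $A_j$. Equivalently, one may read this as an envelope-theorem statement: $h(\lambda)=\min_x \sum_i\lambda_i\norm{x-\tilde c_i}_{A_i}^2$, which is consistent with \cref{negative_obj_function_is_convex_combination_of_mahadistances}, with minimizer $m(\lambda)$. I would mention this as a cross-check.

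For the Hessian, differentiate $\partial_j F = -(\tilde c_j-m)^\top A_j(\tilde c_j-m)$ in $\lambda_i$. This gives
\[
\partial_i\partial_j F = 2(\tilde c_j - m)^\top A_j\, \partial_i m = -2\,w_j^\top\partial_i m,
\]
using $w_j = A_j(m-\tilde c_j)$. So the key step is computing $\partial_i m$. Differentiate the identity $A(\lambda)m(\lambda) = \sum_k\lambda_kA_k\tilde c_k$ in $\lambda_i$ to get $A_i m + A\,\partial_i m = A_i\tilde c_i$. Hence $\partial_i m = -A^{-1}A_i(m-\tilde c_i) = -A^{-1}w_i$. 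Substituting yields $\partial_i\partial_j F = 2 w_j^\top A^{-1} w_i = 2\langle w_i,w_j\rangle_{A(\lambda)^{-1}}$, which is symmetric as expected.

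The main obstacle is only bookkeeping: keeping signs consistent between $b=-Am$ and $w_i$, and justifying differentiability of $m(\lambda)$. The latter follows since $\lambda\mapsto A(\lambda)^{-1}$ is smooth wherever $A(\lambda)\in S^d_{++}$, which includes a neighborhood of $\Delta^n$ in the positive orthant, so one-sided derivatives at boundary points of $\Delta^n$ are covered as well.
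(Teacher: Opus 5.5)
Your proposal is correct and follows essentially the same route as the paper: the gradient comes from differentiating $b^\top A^{-1}b - a^\top\lambda$ using $\partial_j A^{-1} = -A^{-1}A_jA^{-1}$, and the Hessian comes from differentiating that gradient with $\partial_i m = -A^{-1}w_i$. The differences are cosmetic. You obtain $\partial_i m$ by implicitly differentiating $A(\lambda)m(\lambda) = -b(\lambda)$ rather than the explicit formula $m=-A^{-1}b$, and you add an envelope-theorem cross-check and a differentiability remark that the paper leaves implicit.
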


\begin{proof}
	Write $A=A(\lambda)$, $b=b(\lambda)$, $m=m(\lambda)$, and $a_j=\tilde c_j^\top A_j\tilde c_j$.
	Since $m=-A^{-1}b$, we have
	$F(\lambda)=m^\top Am-a^\top\lambda=b^\top A^{-1}b-a^\top\lambda$.
	Using $\partial A/\partial\lambda_j=A_j$, $\partial b/\partial\lambda_j=b_j=-A_j\tilde c_j$, and
$\frac{\partial A^{-1}}{\partial\lambda_j}=-A^{-1}A_jA^{-1}$,
	we obtain
	\begin{align*}
	\pder[F]{\lambda_j}
	&=2b_j^\top A^{-1}b-b^\top A^{-1}A_jA^{-1}b-a_j=2\tilde c_j^\top A_jm-m^\top A_jm-\tilde c_j^\top A_j\tilde c_j=-(\tilde c_j-m)^\top A_j(\tilde c_j-m).
	\end{align*}
	This gives the claimed gradient formula.
It remains to differentiate the gradient. From $m=-A^{-1}b$, we get
	$\pder[m]{\lambda_i}
	=A^{-1}A_iA^{-1}b-A^{-1}b_i
	=-A^{-1}A_i(m-\tilde c_i)
	=-A^{-1}w_i(\lambda)$.
	Therefore,
	\begin{align*}
	\pder[^2F]{\lambda_i\lambda_j}
	&=\pder{}{\lambda_i}\left[-(\tilde c_j-m)^\top A_j(\tilde c_j-m)\right]=2(\tilde c_j-m)^\top A_j\pder[m]{\lambda_i}\\
	&=2(m-\tilde c_j)^\top A_jA^{-1}w_i(\lambda)=2w_j(\lambda)^\top A(\lambda)^{-1}w_i(\lambda),
	\end{align*}
	which is the stated Hessian formula.
\end{proof}

The gradient formula admits a transparent geometric reading: each partial derivative $\pder[F]{\lambda_j}=-\norm{\tilde c_j-m(\lambda)}_{A_j}^2$ is the negative of a squared Mahalanobis radius, hence non-positive for every $j$. Thus $F$ is non-increasing along each coordinate direction, and its minimizer over $\Delta^n$ is pinned down by the equality constraint $\sum_{i}\lambda_i=1$ rather than by an interior stationary point of the unconstrained objective. This sign-definiteness underlies the shape of the hypersurface $F(\lambda)=-h(\lambda)$ visualized in \cref{fig:triple_none,fig:double}.

\begin{lemma}\label{obj_func_convex}
	The objective function $F\colon \Delta^n\to \R$ given by
$F(\lambda)=m(\lambda)^\top A(\lambda) m(\lambda)-a^\top \lambda$
	is convex on all of $\Delta^n$.
\end{lemma}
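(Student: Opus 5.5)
The plan is to show that the Hessian of $F$ from \cref{derivatives_of_objective_function} is positive semidefinite at every $\lambda$ in an open convex neighbourhood of $\Delta^n$ on which $A(\lambda)$ stays positive definite. Convexity on $\Delta^n$ then follows from the standard second-order criterion applied along segments.

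First, fix the domain. $F$ is defined, and smooth, wherever $A(\lambda)\in S_{++}^d$. This includes the open cone $\{\lambda:\lambda_i>0\}$, whose closure minus the origin contains $\Delta^n$. By continuity of eigenvalues, it also includes an open neighbourhood $U$ of the compact set $\Delta^n$. Restricting to the segment $[\lambda,\mu]\subset\Delta^n$, the function $\phi(t)=F(\lambda+t(\mu-\lambda))$ is $C^2$ on an open interval containing $[0,1]$. Convexity of $F$ on $\Delta^n$ therefore reduces to $\phi''(t)\geqslant 0$ for $t\in[0,1]$.

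Second, compute $\phi''$ via \cref{derivatives_of_objective_function}. For any direction $v\in\R^n$,
\[
v^\top \nabla^2F(\lambda)\, v = 2\sum_{i,j} v_iv_j\, w_i(\lambda)^\top A(\lambda)^{-1} w_j(\lambda) = 2\, u^\top A(\lambda)^{-1} u,
\qquad u=\sum_i v_i w_i(\lambda).
\]
Since $A(\lambda)^{-1}$ is positive definite, this is $\geqslant 0$. In other words, the Hessian is the Gram matrix $2W^\top A(\lambda)^{-1}W$ of the vectors $w_i(\lambda)$, where $W$ has columns $w_i(\lambda)$, and is therefore PSD.

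The only delicate point, and the main obstacle, is justifying the second-order criterion at boundary points of $\Delta^n$, where some $\lambda_i=0$. There I will argue that the Hessian formula remains valid wherever $A(\lambda)\succ0$, since it is a pure calculus identity. That condition holds on all of $\Delta^n$, because at least one weight is positive. So $\phi$ is $C^2$ on a neighbourhood of $[0,1]$ and the argument goes through uniformly.

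As an optional cross-check, $F(\lambda)=b(\lambda)^\top A(\lambda)^{-1}b(\lambda)-a^\top\lambda$ is a matrix-fractional function composed with affine maps plus a linear term. The matrix-fractional function $(b,A)\mapsto b^\top A^{-1}b$ is jointly convex on $\R^d\times S_{++}^d$, which gives convexity directly without the Hessian.
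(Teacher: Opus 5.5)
Your proposal is correct and follows essentially the same route as the paper: both use \cref{derivatives_of_objective_function} to rewrite $v^\top\nabla^2F(\lambda)\,v$ as $2\,u^\top A(\lambda)^{-1}u$ with $u=\sum_i v_i w_i(\lambda)$, and conclude that the Hessian is positive semidefinite. Your segment-wise argument on an open neighbourhood where $A(\lambda)\succ0$ is a more careful way of passing from the Hessian to convexity than the paper's appeal to the ``interior'' of $\Delta^n$, which is empty in $\R^n$. Your matrix-fractional cross-check is also a valid Hessian-free alternative.
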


\begin{proof}
	We recall that a twice-differentiable function on a convex set is convex if its Hessian matrix is positive semi-definite on the interior of the convex set.	
	Since $A(\lambda)^{-1}$ is positive definite for each $\lambda\in \Delta^n$, we obtain for any vector $v=(v_1,\dotsc, v_n)^\top$, that, using Lemma \ref{derivatives_of_objective_function},
	\begin{align*}
		\sum_{i,j} v_i v_j \pder[^2 F]{\lambda_i \lambda_j} &= 2 \left\langle \sum_i v_i \vec{w}_i(\lambda), \sum_j v_j \vec{w}_j(\lambda)\right\rangle_{A(\lambda)^{-1}}
        =2\langle w(\lambda), w(\lambda) \rangle_{A(\lambda)^{-1}} \geqslant 0
	\end{align*}
	where $w=\displaystyle\sum_{k=1}^n v_k \vec{w}_k(\lambda)$. 
    This holds for all $\lambda \in \Delta^n$, hence $F$ has a PSD Hessian on the interior of the convex domain $\Delta^n$, hence $F$ is convex on all of $\Delta^n$.
\end{proof}

Because $F$ is convex and differentiable, the constraints $\lambda_i\geqslant 0$ and $1^\top\lambda=1$ are affine, and Slater's condition holds (the relative interior of $\Delta^n$ is non-empty), the KKT conditions are necessary and sufficient for optimality. Introduce the multiplier $\alpha\in\R$ for the equality constraint $1^\top\lambda=1$ and multipliers $\mu_1,\dotsc,\mu_n\geqslant 0$ for the inequalities $\lambda_i\geqslant 0$, giving the Lagrangian
$\mathcal{L}(\lambda, \alpha, \mu) = F(\lambda)+\alpha(1^\top \lambda-1) - \mu^\top \lambda\,$.
Stationarity $\nabla_\lambda\mathcal{L}=0$, evaluated with \cref{derivatives_of_objective_function}, gives for each $i=1,2,\dotsc,n$, $\norm{\tilde c_i - m(\lambda)}_{A_i}^2 = \alpha - \mu_i\,$.

Since $\mu_i\geqslant 0$ and the left-hand side is non-negative, $\alpha\geqslant 0$ and $\norm{\tilde c_i - m(\lambda)}_{A_i}^2\leqslant\alpha$ for every $i$; complementary slackness $\mu_i\lambda_i=0$ forces $\mu_i=0$, hence $\norm{\tilde c_i - m(\lambda)}_{A_i}^2=\alpha$, at each index with $\lambda_i>0$. As $\lambda\in\Delta^n$ has at least one positive component, it follows that
\[
\alpha = \max_{1\leqslant i\leqslant n} \norm{\tilde c_i -m(\lambda)}_{A_i}^2
\qquad\text{and}\qquad
m(\lambda) \in \displaystyle\bigcap_{i=1}^n E_{\sqrt{\alpha}}(\tilde c_i, A_i)\, .
\]

Thus, we have proven
\begin{lemma}\label{kkt_centroid}
	Let $\lambda^*$ be a minimizer of $F$ over $\Delta^n$ and $\alpha = \max_{1\leqslant i\leqslant n} \norm{\tilde c_i -m(\lambda^*)}_{A_i}^2$.
	Then
    $m(\lambda^*) \in \displaystyle\bigcap_{i=1}^n E_{\sqrt{\alpha}}(\tilde c_i, A_i)\,$.
	Moreover, $\norm{\tilde c_i - m(\lambda^*)}_{A_i}=\sqrt{\alpha}$ at every index $i$ with $\lambda_i^*>0$; in particular, if $\lambda_i^* >0$ for all $i=1,2,\dotsc, n$, then $m(\lambda^*)$ lies on the intersection of the boundaries of the original Mahalanobis balls.
\end{lemma}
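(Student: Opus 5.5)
The plan is to read the statement off the KKT system for minimizing $F$ over $\Delta^n$, as set up just before the lemma. The ingredients are convexity of $F$ (\cref{obj_func_convex}), differentiability of $F$ on $\Delta^n$ (since $A(\lambda)\in S_{++}^d$ there), and affineness of the constraints $\lambda_i\geqslant 0$, $1^\top\lambda=1$. Together these make the KKT conditions necessary at any minimizer $\lambda^*$; for affine constraints no further constraint qualification is needed. So I will first assert the existence of multipliers $\alpha\in\R$ and $\mu\succeq 0$ satisfying stationarity, primal feasibility, and complementary slackness $\mu_i\lambda_i^*=0$.

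Next I substitute the gradient from \cref{derivatives_of_objective_function} into stationarity $\nabla F(\lambda^*)+\alpha\mathbf 1-\mu=0$. This gives, for every $i$,
\[
\norm{\tilde c_i-m(\lambda^*)}_{A_i}^2=\alpha-\mu_i .
\]
Since $\mu_i\geqslant 0$, we get $\norm{\tilde c_i-m(\lambda^*)}_{A_i}^2\leqslant\alpha$ for all $i$. Since $\lambda^*\in\Delta^n$, some index $j$ has $\lambda^*_j>0$, and complementary slackness then gives $\mu_j=0$. Hence $\alpha$ is attained as a squared distance, so $\alpha\geqslant0$ and $\alpha=\max_i\norm{\tilde c_i-m(\lambda^*)}_{A_i}^2$. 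This agrees with the $\alpha$ defined in the statement, so the KKT multiplier and the statement's $\alpha$ coincide.

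The conclusions now follow directly. Because $\norm{\tilde c_i-m(\lambda^*)}_{A_i}\leqslant\sqrt\alpha$ for all $i$, \cref{def_ellipse_fixed_r} gives $m(\lambda^*)\in E_{\sqrt\alpha}(\tilde c_i,A_i)$ for every $i$, which is the claimed membership in the intersection. For every $i$ with $\lambda_i^*>0$, complementary slackness gives $\mu_i=0$, so $\norm{\tilde c_i-m(\lambda^*)}_{A_i}=\sqrt\alpha$. If all $\lambda_i^*>0$, this equality holds for every $i$, so $m(\lambda^*)$ lies on each boundary $\{x:\norm{x-\tilde c_i}_{A_i}=\sqrt\alpha\}$.

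The only delicate point is justifying that the KKT conditions are \emph{necessary} at a minimizer; sufficiency is not needed here. I would handle it by citing that for differentiable objectives under linear constraints, KKT holds at local minima via the Abadie constraint qualification. Alternatively, one can give a direct argument: at $\lambda^*$, moving mass from any $j$ with $\lambda_j^*>0$ to any $i$ cannot decrease $F$, i.e. $\partial_iF(\lambda^*)\geqslant\partial_jF(\lambda^*)$. This yields the same characterization of the squared distances without multipliers.
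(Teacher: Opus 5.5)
Your proposal is correct and takes essentially the paper's route: the paper also reads the lemma off the KKT system for $\min_{\Delta^n}F$, substitutes the gradient from \cref{derivatives_of_objective_function} into stationarity to get $\norm{\tilde c_i-m(\lambda^*)}_{A_i}^2=\alpha-\mu_i$, and then uses $\mu_i\geqslant 0$, complementary slackness, and a positive component of $\lambda^*$ to identify the multiplier with the maximum. The only difference is how you justify the KKT conditions. The paper invokes convexity of $F$ together with Slater's condition, which gives both necessity and sufficiency. You instead note, correctly, that only necessity is needed and that it holds for any differentiable objective under affine constraints. Your multiplier-free exchange argument, $\partial_iF(\lambda^*)\geqslant\partial_jF(\lambda^*)$ whenever $\lambda_j^*>0$, reaches the same conclusion directly.
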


Thus, at radius $\sqrt{\alpha}$, the intersection of the original ellipsoidal balls is non-empty. 
Therefore, by minimality, the smallest radius, $r^*$, yielding a non-empty intersection must satisfy $r^*\leqslant \sqrt{\alpha}$. 
We now show that $r^* = \sqrt{\alpha} = \sqrt{h(\lambda^*)}$.

\begin{theorem}\label{thm:mir_radius}
	Let $\lambda^*$ be a minimizer of $F$ over $\Delta^n$, and set
	$\alpha = \max_{1\leqslant i\leqslant n} \norm{\tilde c_i -m(\lambda^*)}_{A_i}^2$.
	The smallest radius that yields a non-empty intersection is
	\[
    r^* = \sqrt{\alpha}=\sqrt{h(\lambda^*)} =\sqrt{\sum_{i=1}^n \lambda_i^* \tilde c_i^\top A_i \tilde c_i -b(\lambda^*)^\top A(\lambda^*)^{-1} b(\lambda^*)}\, ,
    \]
	hence $g(\lambda^*; r^*)=0$ and so $E_{\lambda^*, r^*} =\{m(\lambda^*)\}$.
\end{theorem}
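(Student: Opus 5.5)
The plan is to establish $\alpha = h(\lambda^*)$ first, then squeeze $r^*$ between $\sqrt{h(\lambda^*)}$ from below (via the certificate~\eqref{eq_cert_in_iff}) and $\sqrt{\alpha}$ from above (via \cref{kkt_centroid}).

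\textbf{Step 1: $\alpha = h(\lambda^*)$.} By \cref{negative_obj_function_is_convex_combination_of_mahadistances}, $h(\lambda^*)=\sum_i \lambda_i^*\norm{\tilde c_i-m(\lambda^*)}_{A_i}^2$. By \cref{kkt_centroid}, every index with $\lambda_i^*>0$ has $\norm{\tilde c_i-m(\lambda^*)}_{A_i}^2=\alpha$. Indices with $\lambda_i^*=0$ contribute nothing. Since $\sum_i\lambda_i^*=1$, we get $h(\lambda^*)=\alpha$. The explicit formula in the statement is then just the definition of $h$, with $a_i=\tilde c_i^\top A_i\tilde c_i$.

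\textbf{Step 2: upper bound.} \cref{kkt_centroid} gives $m(\lambda^*)\in\bigcap_i E_{\sqrt\alpha}(\tilde c_i,A_i)$. Hence the set in \hyperlink{prob}{\textbf{Q}} contains $\sqrt\alpha$, so $r^*\leqslant\sqrt\alpha$.

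\textbf{Step 3: lower bound.} Let $r\geqslant0$ be any radius at which the intersection is non-empty. By~\eqref{eq_cert_in_iff}, $\max_{\lambda\in\Delta^n} g(\lambda;r)\leqslant 0$. In particular $g(\lambda^*;r)=h(\lambda^*)-r^2\leqslant 0$, since $g(\lambda;r)=h(\lambda)-r^2$. This gives $r^2\geqslant h(\lambda^*)=\alpha$.

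Only the single point $\lambda^*$ is needed here. It is worth noting, though, that $\lambda^*$ maximizes $h$ over $\Delta^n$ because it minimizes $F=-h$. So $\max_\lambda g(\lambda;r)=\alpha-r^2$ exactly, and the certificate reads: the intersection is non-empty if and only if $r\geqslant\sqrt\alpha$. Taking the infimum over admissible $r$ gives $r^*\geqslant\sqrt\alpha$, and with Step 2, $r^*=\sqrt\alpha$. The infimum is attained, since $\sqrt\alpha$ itself is admissible.

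\textbf{Step 4: conclusion.} We have $g(\lambda^*;r^*)=h(\lambda^*)-(r^*)^2=\alpha-\alpha=0$. By \cref{classification_of_covering}(2), applicable because $\lambda^*\in\Delta^n$ is nonzero and nonnegative, $E_{\lambda^*,r^*}=\{m(\lambda^*)\}$.

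The only delicate point is Step 1. It relies on complementary slackness and on the fact that the KKT conditions hold at $\lambda^*$. This is justified in the paper by convexity of $F$ (\cref{obj_func_convex}) and the affine constraints, so \cref{kkt_centroid} can be applied directly.
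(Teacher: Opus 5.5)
Your proof is correct and is essentially the paper's argument. The paper also gets the lower bound from $g(\lambda^*;q)=h(\lambda^*)-q^2>0$ for $q<\sqrt{h(\lambda^*)}$ via \eqref{eq_cert_in_iff}, and identifies $h(\lambda^*)=\alpha$ using \cref{negative_obj_function_is_convex_combination_of_mahadistances} and \cref{kkt_centroid}; the only difference is that you fold the upper bound $r^*\leqslant\sqrt\alpha$ from \cref{kkt_centroid} into the proof as an explicit sandwich, whereas the paper states it in the paragraph just before the theorem.
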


\begin{proof}
    We set $g(\lambda^*;r)=0$ and solve for $r$. This is equivalent to $\gamma(\lambda^*; r) = b(\lambda^*)^\top A(\lambda^*)^{-1} b(\lambda^*)$, or, using the expression for $\gamma(\lambda^*;r)$ above and $\sum_{i=1}^n \lambda_i^*=1$,
    \[
    r^2=\sum_{i=1}^n \lambda_i^* \tilde c_i^\top A_i \tilde c_i -b(\lambda^*)^\top A(\lambda^*)^{-1} b(\lambda^*) = h(\lambda^*),
    \]
    so $r=\sqrt{h(\lambda^*)}$. We claim this is the optimal radius $r^*$. Indeed, if $q<r$ then $g(\lambda^*;q)=h(\lambda^*)-q^2>0$, which by~\eqref{eq_cert_in_iff} implies $\displaystyle\bigcap_{i=1}^n E_q(\tilde c_i, A_i)=\emptyset$. Therefore no radius $q<r$ yields a non-empty intersection, and $r=r^*$.
	
	Now we must show that $\sqrt{h(\lambda^*)}=\sqrt{\alpha}$. By \cref{negative_obj_function_is_convex_combination_of_mahadistances}, $h(\lambda^*)=\sum_{i=1}^n \lambda_i^* \norm{\tilde c_i - m(\lambda^*)}_{A_i}^2$; by \cref{kkt_centroid} we have $\norm{\tilde c_i - m(\lambda^*)}_{A_i}^2=\alpha$ whenever $\lambda_i^*>0$, while the terms with $\lambda_i^*=0$ vanish, so $h(\lambda^*)=\alpha\sum_{i=1}^n\lambda_i^*=\alpha$ since $\lambda^*\in\Delta^n$.
\end{proof}

We summarize the results and all the notation in the following theorem.

\begin{theorem}\label{thm:mir_certificate}
	Consider $n\geqslant 1$ Mahalanobis balls at radius $r\geqslant 0$ centered at $\tilde{c}_1,\dotsc, \tilde{c}_n\in \R^d$ with orientation matrices $A_1,\dotsc, A_n \in S_{++}^d$. Let $\lambda^*\in \operatorname{argmin}_{\lambda\in\Delta^n} F(\lambda)$.
    Then we have a certificate of intersection via the criterion:
$\displaystyle\bigcap_{i=1}^n E_r(\tilde c_i, A_i) \neq \emptyset \iff g(\lambda^*; r) \leqslant 0$.
	The smallest radius that yields a non-empty intersection is
	\[
    r^* = \sqrt{\max_{1\leqslant i\leqslant n} \norm{\tilde c_i - m(\lambda^*)}_{A_i}^2}=\sqrt{-F(\lambda^*)}=\sqrt{h(\lambda^*)}\, ,
    \]
	and at this radius we have $g(\lambda^*; r^*)=0$ and the covering ellipsoid is a singleton: $E_{\lambda^*, r^*} = \{m(\lambda^*)\}$.
	The formulas remain valid in the degenerate regime $r^*=0$, which occurs precisely when all centers coincide: $E_0(\tilde c_i, A_i)=\{\tilde c_i\}$, so $\cap_i E_0(\tilde c_i, A_i)=\cap_i\{\tilde c_i\}$ is non-empty if and only if $\tilde c_1=\dotsb=\tilde c_n$. The $n=1$ case is the simplest instance ($m(\lambda^*)=\tilde c_1$ and $r^*=0$).
\end{theorem}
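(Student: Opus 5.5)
The plan is to assemble Theorem~\ref{thm:mir_certificate} from the results already established, since it is essentially a summary. There are three parts: the certificate equivalence, the formula for $r^*$ together with the singleton statement, and the degenerate remarks about $r^*=0$ and $n=1$.

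\emph{Certificate.} I would begin from \eqref{eq_cert_in_iff}, namely $\bigcap_i E_r(\tilde c_i,A_i)\neq\emptyset \iff \max_{\lambda\in\Delta^n} g(\lambda;r)\leqslant 0$. The key observation is that $g(\lambda;r)=h(\lambda)-r^2$ on $\Delta^n$, because there $\sum_i\lambda_i=1$. Consequently $\max_{\Delta^n} g(\cdot;r)=\max_{\Delta^n}h-r^2=-F(\lambda^*)-r^2=g(\lambda^*;r)$ for every $r$, since $\lambda^*$ minimizes $F=-h$. The maximizer therefore does not depend on $r$, and the criterion $g(\lambda^*;r)\leqslant 0$ follows.

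\emph{Radius.} The chain of equalities $r^*=\sqrt{\alpha}=\sqrt{h(\lambda^*)}$ is exactly Theorem~\ref{thm:mir_radius}, and $h=-F$ gives $\sqrt{-F(\lambda^*)}$. The identity $g(\lambda^*;r^*)=h(\lambda^*)-(r^*)^2=0$ is immediate, and Corollary~\ref{classification_of_covering}(2) then yields $E_{\lambda^*,r^*}=\{m(\lambda^*)\}$. I would also note that the infimum in \hyperlink{prob}{\textbf{Q}} is attained: by Lemma~\ref{kkt_centroid}, $m(\lambda^*)$ lies in the intersection at radius $\sqrt\alpha$.

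\emph{Degenerate cases.} Every $h(\lambda)$ is a nonnegative combination of squared norms, so $h\geqslant 0$ and $r^*$ is real. If $r^*=0$, the intersection at radius $0$ is nonempty; since $E_0(\tilde c_i,A_i)=\{\tilde c_i\}$ because $A_i$ is PD, all centers must coincide. Conversely, if all centers coincide then the intersection is already nonempty at $r=0$. The formulas still hold in this case: $m(\lambda)=A(\lambda)^{-1}\sum_i\lambda_iA_i\tilde c=\tilde c$, so $h\equiv 0$. For $n=1$ we have $\Delta^1=\{1\}$, $m=\tilde c_1$, and $h=0$.

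The only delicate point is the $r$-independence of the maximizer, which is what lets a single $\lambda^*$ serve for all radii. This is handled by the identity $g=h-r^2$ on $\Delta^n$. Beyond that I expect no real obstacle, apart from checking that the earlier lemmas never assumed $r>0$; they only use $r\geqslant 0$.
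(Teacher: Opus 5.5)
Your proposal is correct and follows essentially the same route as the paper. The paper states \cref{thm:mir_certificate} as a summary without a separate proof, combining \eqref{eq_cert_in_iff}, the remark that $g(\lambda;r)=h(\lambda)-r^2$ on $\Delta^n$ (so the maximizer does not depend on $r$), \cref{thm:mir_radius}, and \cref{classification_of_covering}(2), exactly as you do; your explicit handling of the degenerate regime (attainment of the infimum via \cref{kkt_centroid}, and $m(\lambda)\equiv\tilde c$, $h\equiv 0$ when all centers coincide) supplies details the paper only asserts.
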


Thus, we can detect if the ellipsoidal balls intersect at any radius by minimizing $F(\lambda)$ over the probability simplex. 
Once that is done, we obtain the smallest radius $r^*$ by simply plugging $\lambda^*$ into one of those expressions above. 
The complexity of such a method is the sum of the complexity of the solver used to compute $\min_{\lambda \in \Delta^n} F(\lambda)$ and the complexity of the matrix arithmetic required to evaluate one of the formulas above.
For a visualization of the method, showing the behavior of the hypersurface $F(\lambda)=-h(\lambda)$, whether above or below the plane $z=0$, in relation to the existence of a common intersection, see \cref{fig:triple_none,fig:double}. 

\begin{figure}[htbp]
    \centering
    \includegraphics[scale=.25,valign=c]{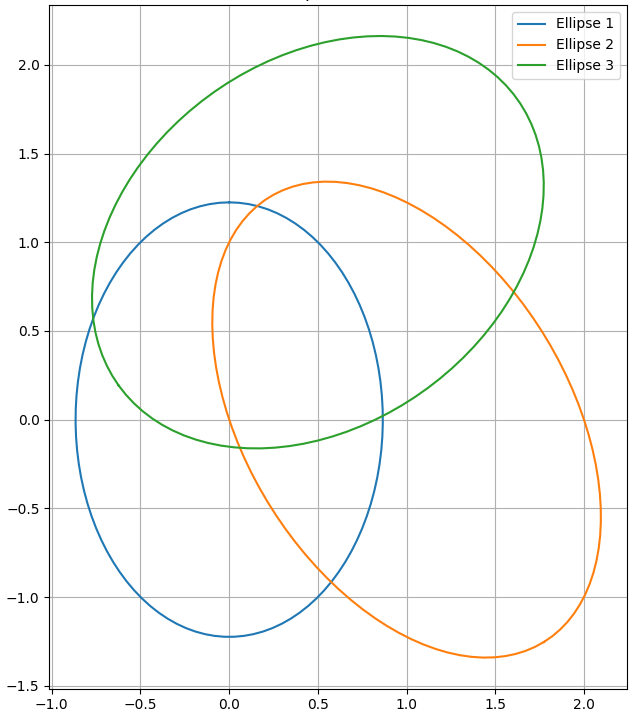}
    \includegraphics[scale=.24,valign=c]{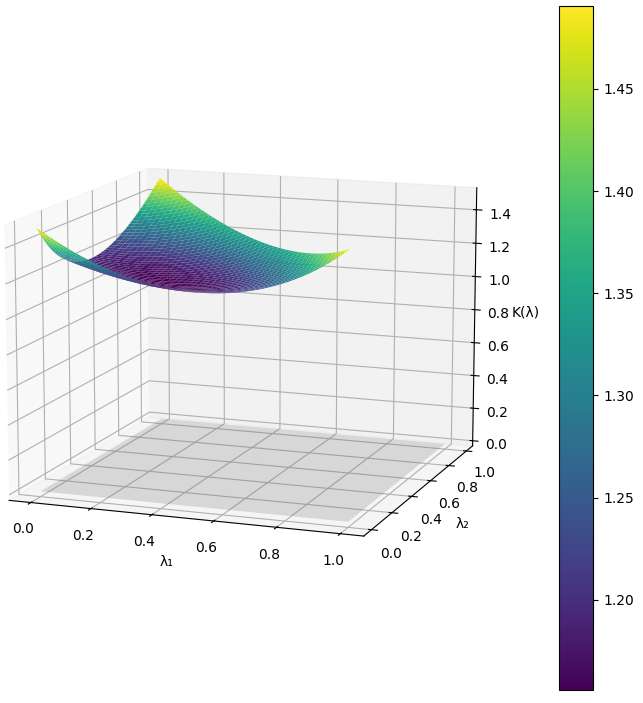}
    \includegraphics[scale=.25,valign=c]{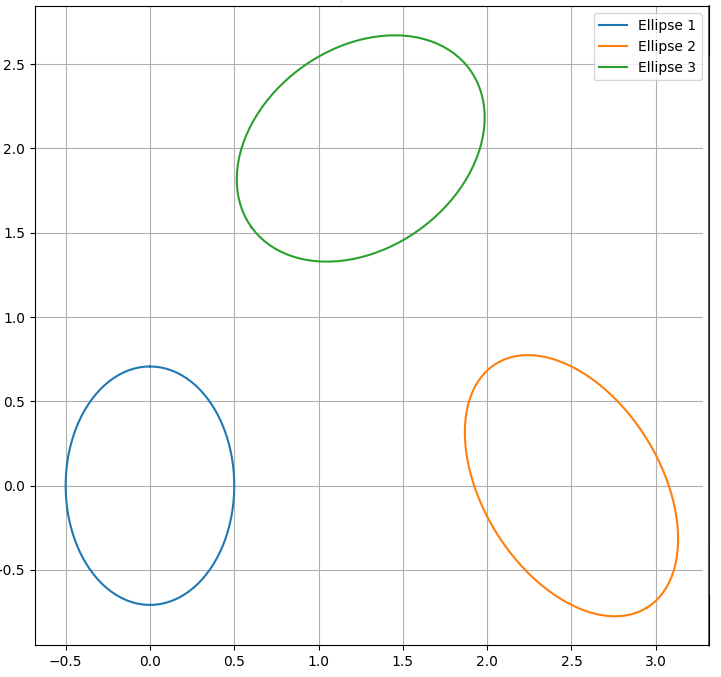}
    \includegraphics[scale=.24,valign=c]{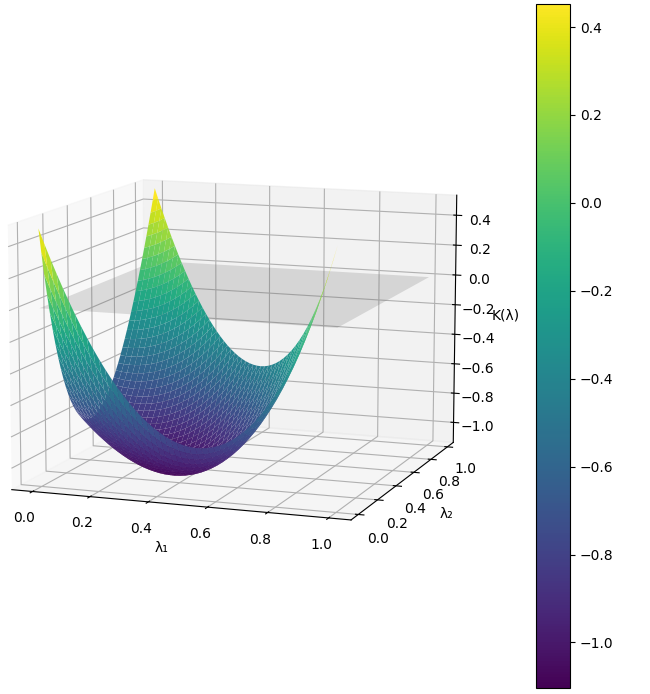}
    \caption{The hypersurface $F(\lambda)=-h(\lambda)$ for three ellipses in $\R^2$, with triple intersection (left) and no intersection (right).}
    \label{fig:triple_none}
\end{figure}

\begin{figure}[h!]
    \centering
    \includegraphics[scale=.3]{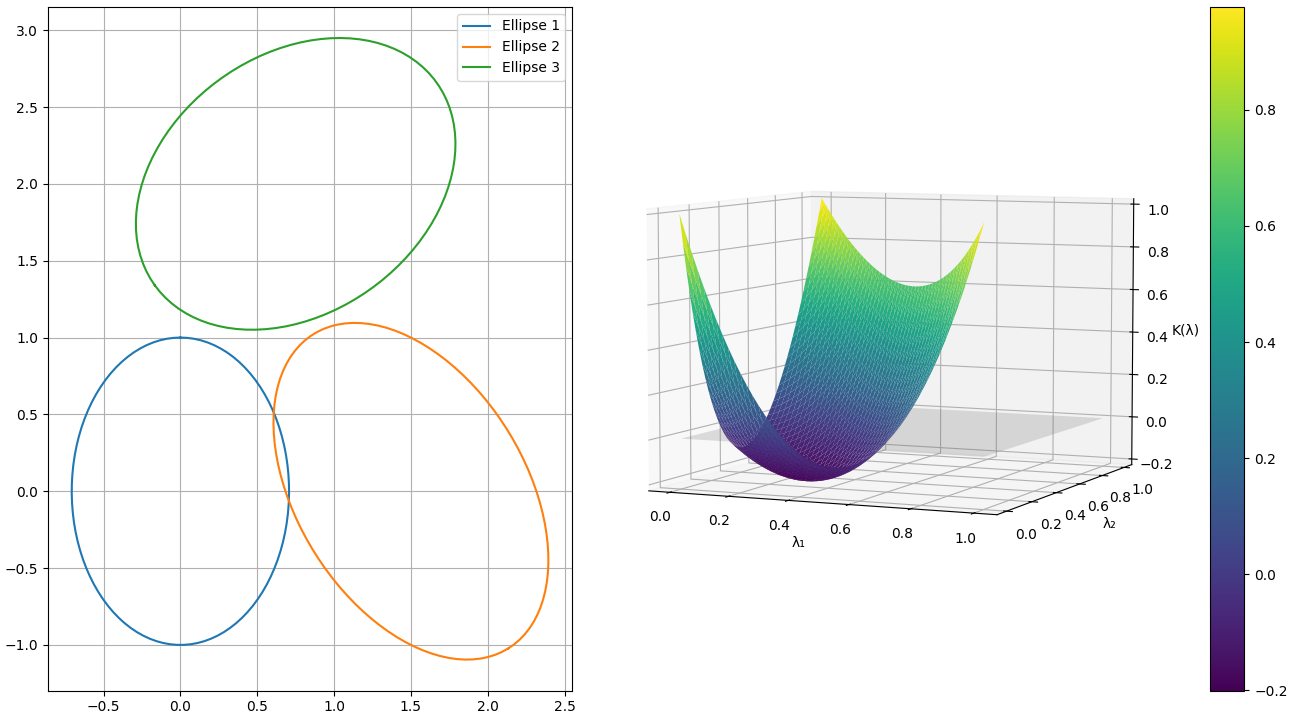}
    \caption{The hypersurface $F(\lambda)=-h(\lambda)$ for three ellipses in $\R^2$, with one pairwise intersection.}
    \label{fig:double}
\end{figure}

Note in the case of $n=2$ Mahalanobis balls, the convex optimization over the simplex can be avoided and replaced with simply counting the roots of the rational function $F(\lambda)$ on $\Delta^1 = (0,1)$ which is equivalent to counting the roots of the polynomial function $\det [A(\lambda)] F(\lambda)$ and the Leverrier algorithm can be applied to do so, as detailed in \cite{robust2ellipsoids}. 
It is not obvious if a similar workaround, avoiding the convex optimization $\min_{\lambda\in \Delta^n} F(\lambda)$, is possible in the $n>2$ case. 
The obstruction is that the $n=2$ shortcut relies on $\Delta^1=(0,1)$ being one-dimensional, so that feasibility reduces to counting the roots of the single univariate polynomial $\det[A(\lambda)]\,F(\lambda)$. For $n>2$ the optimization ranges over the higher-dimensional simplex $\Delta^{n-1}$, where no such univariate reduction is available: eliminating variables leads to a multivariate polynomial system whose resultant degrees grow rapidly with $n$ and $d$, so the Leverrier root-count no longer applies directly. We leave a full treatment of this question for future work.

We can nevertheless apply the standard pruning method: if we have $n$ Mahalanobis balls, we can run the Leverrier shortcut for each of the $\binom{n}{2}$ pairs. 
If any one of these pairs has an empty intersection, then we know the global intersection is empty. 
However, it is possible to have pair-wise intersection for all $\binom{n}{2}$ pairs but still have an empty total intersection, as described in the introduction. 

\subsection{MIR as a second-order cone program}\label{ssec_socp}

In the above section, we provided in full detail the explicit formula to obtain the MIR, together with its geometric interpretation. 
In this section, we present an alternative formulation of the MIR computation using the optimization problem for the radius as a SOCP (Second-Order Cone Program), see \cite[p. 156]{Boyd_Vandenberghe_2004}. 
While this formulation is considerably shorter, it only gives us a minimization problem that should then be solved with a solver, but no explicit geometric interpretation, which is important for our setting. 

\begin{lemma}
	Let $C = \{(t,x)\in [0,\infty)\times \R^d\colon  x\in E_t(c, A)\}$.
	Then $C$ is a convex set.
\end{lemma}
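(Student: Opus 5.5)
The plan is to recognize $C$ as the epigraph-type set of a norm and conclude convexity from the triangle inequality. By \cref{def_ellipse_fixed_r}, $x\in E_t(c,A)$ means $(x-c)^\top A(x-c)\leqslant t^2$. Since $t\geqslant 0$, this is equivalent to $\norm{x-c}_A\leqslant t$, where $\norm{z}_A=\sqrt{z^\top Az}$ is the Mahalanobis norm introduced after \cref{def_ellipse_fixed_r}. We can take square roots here because both sides are non-negative. Hence
\[
C=\{(t,x)\in\R\times\R^d\colon \norm{x-c}_A\leqslant t\},
\]
and the constraint $t\geqslant 0$ is now automatic.

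First I will check that $\norm{\cdot}_A$ is a norm. Since $A\in S_{++}^d$, we can write $A=L^\top L$ with $L$ invertible (Cholesky factorization, or $L=A^{1/2}$). Then $\norm{z}_A=\norm{Lz}_2$, so $\norm{\cdot}_A$ inherits absolute homogeneity and the triangle inequality from the Euclidean norm.

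Next, convexity. Take $(t_1,x_1),(t_2,x_2)\in C$ and $\theta\in[0,1]$. Then
\[
\norm{\theta x_1+(1-\theta)x_2-c}_A=\norm{\theta(x_1-c)+(1-\theta)(x_2-c)}_A\leqslant \theta\norm{x_1-c}_A+(1-\theta)\norm{x_2-c}_A\leqslant \theta t_1+(1-\theta)t_2 .
\]
This shows that the convex combination lies in $C$.

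An equivalent formulation is that $C$ is the preimage of the standard second-order cone $\{(t,y)\colon \norm{y}_2\leqslant t\}$ under the affine map $(t,x)\mapsto (t,L(x-c))$. This is the form that matters for the SOCP formulation that follows.

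There is no real obstacle. The only point needing care is the equivalence between the squared inequality and the unsquared one, which relies on $t\geqslant 0$ being built into the definition of $C$. Without that restriction, the set defined by $(x-c)^\top A(x-c)\leqslant t^2$ is a double cone, and it is not convex.
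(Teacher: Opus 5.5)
Your proof is correct and takes the same route as the paper: both identify $C$ with the epigraph $\{(t,x)\colon \norm{x-c}_A\leqslant t\}$ of the norm $\norm{\cdot}_A$ precomposed with the affine map $x\mapsto x-c$. Your explicit triangle-inequality check just unpacks the paper's appeal to ``the epigraph of a convex function is convex,'' and your remark that $t\geqslant 0$ is what justifies taking square roots makes explicit a point the paper leaves implicit.
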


\begin{proof}
    The set $C=\{(t,x)\colon \norm{x-c}_A\leqslant t\}$ is the epigraph of the map $x\mapsto\norm{x-c}_A$, which is convex since $x\mapsto\sqrt{x^\top A x}$ is a norm (as $A$ is positive definite) precomposed with the affine map $x\mapsto x-c$; the epigraph of a convex function is convex.
\end{proof}

Since the intersection of convex sets is convex, we obtain
\begin{corollary}
	If $C_i = \{(t,x)\in [0,\infty)\times \R^d\colon  x\in E_t(c_i, A_i)\}$,
	then $C \coloneqq \{(t,x)\in [0,\infty)\times \R^d\colon x\in \cap_{i=1}^n E_t(c_i, A_i)\}=\displaystyle\bigcap_{i=1}^n C_i$
	is convex.
\end{corollary}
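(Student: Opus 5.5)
The plan is to prove the set identity by double inclusion and then deduce convexity from the preceding lemma, using that an intersection of convex sets is convex.

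For the identity, fix a pair $(t,x)\in[0,\infty)\times\R^d$. By definition, $(t,x)\in C$ if and only if $x\in E_t(c_i,A_i)$ for every $i=1,\dotsc,n$. For a single index $i$, the condition $x\in E_t(c_i,A_i)$ (together with $t\geqslant 0$) is exactly the membership condition for $(t,x)\in C_i$. Hence $(t,x)\in C$ if and only if $(t,x)\in C_i$ for all $i$, which is $C=\bigcap_{i=1}^n C_i$. Both sides live in the same ambient set $[0,\infty)\times\R^d$, so no domain issues arise.

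For convexity, apply the preceding lemma to each pair $(c_i,A_i)$. Since $A_i\in S_{++}^d$, the lemma shows that each $C_i$ is convex: it is the epigraph of the convex function $x\mapsto\norm{x-c_i}_{A_i}$. Take any $(t,x),(s,y)\in C$ and $\theta\in[0,1]$. Both pairs lie in every $C_i$, so the convex combination $\theta(t,x)+(1-\theta)(s,y)$ lies in every $C_i$, hence in $C$.

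There is no real obstacle here. The only point worth stating explicitly is that $E_t(c,A)=\{x:\norm{x-c}_A\leqslant t\}$ for $t\geqslant 0$, because $(x-c)^\top A(x-c)\leqslant t^2$ is equivalent to $\norm{x-c}_A\leqslant t$ when $t\geqslant0$. This is the identification the lemma already used, so each $C_i$ is literally the set treated there with $c=c_i$ and $A=A_i$.
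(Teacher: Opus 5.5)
Your proposal is correct and follows the paper's own argument: the set identity is just an unfolding of the definitions, and convexity follows from applying the preceding lemma to each $C_i$ and using that an intersection of convex sets is convex. Your remark that $(x-c)^\top A(x-c)\leqslant t^2 \iff \norm{x-c}_A\leqslant t$ for $t\geqslant 0$ is the same identification the lemma's proof relies on.
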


Thus, finding the MIR $r^*$ can be formulated as a second-order-cone-program: $r^*$ solves
\begin{align*}
	\min_{x\in\R^d,\, r\geqslant 0} \{ r \ \colon \ \norm{H_i^\top (x-\tilde c_i)}_2\leqslant r,\quad i=1,2,\dotsc,n \}
\end{align*}
where $H_i\in\R^{d\times d}$ is any matrix factor satisfying $H_iH_i^\top = A_i$ (for example, the Cholesky factor or the symmetric positive-definite square root of $A_i$). The non-negativity constraint $r\geqslant 0$ is implied by the norm bounds but is stated explicitly to match the SOCP standard form. Indeed, $\norm{H_i^\top(x-\tilde c_i)}_2^2=(x-\tilde c_i)^\top H_i H_i^\top (x-\tilde c_i)=(x-\tilde c_i)^\top A_i(x-\tilde c_i)=\norm{x-\tilde c_i}_{A_i}^2$ (independent of the choice of factor $H_i$), so the constraint $\norm{H_i^\top(x-\tilde c_i)}_2\leqslant r$ is exactly $x\in E_r(\tilde c_i, A_i)$, and the optimal value of the program is therefore the MIR $r^*$ of \hyperlink{prob}{\textbf{Q}}.

\subsection{MIR as an LP-Type problem}\label{ssec:lp} 

We now show how the problem for $r^*$ can be formulated as an LP-type problem. 
LP-type problems are well-studied~\cite{sharir_welzl, matousek_sharir_welzl, gaertner_subexp}, and there are at least three well-known algorithms for solving them, which we discuss below after specifying our specific problem.
The computation we have performed so far will be used as a certificate inside the LP-type formulation. 
Indeed, the LP setup allows us to reduce runtime by looking for the MIR only for a subset of ellipsoids, but we still need to compute the MIR of this subset.
\medskip

Let $S$ be a finite set. 
An LP-type problem involves functions $\ell\colon 2^S \to T$ where $T$ is a totally-ordered set satisfying
\begin{enumerate}
	\item Monotonicity: if $A\subset B \subset S$ then $\ell(A)\leqslant \ell(B)\leqslant \ell(S)$.
	\item Locality: Let $A\subset B \subset S$ and $i\in S$. If $\ell(A)=\ell(B)=\ell(A\cup \{i\})$ then $\ell(B) = \ell(B\cup \{i\})$. 
\end{enumerate}
Any function that satisfies these properties we call a \define{LP-type function}.

A \define{basis} of an LP-type problem is a set $B\subseteq S$ with the property that if $A\subsetneq B$ then $\ell(A)<\ell(B)$, and the \define{(combinatorial) dimension} of an LP-type problem is defined to be the maximum cardinality of a basis. 
It is assumed that an optimization algorithm may evaluate the function $\ell$ only on sets that are themselves bases or that are formed by adding a single element to a basis. 
Alternatively, the algorithm may be restricted to two primitive operations: a violation test that determines, for a basis $B$ and an element $i$, whether $\ell(B) = \ell(B \cup \{i\})$, and a basis computation that (with the same inputs) finds a basis of $B \cup \{i\}$.
The task for the algorithm to perform is to evaluate $\ell(S)$ by only using these restricted evaluations or primitives. 
In our case, $\ell(S)$ represents the optimal value of an optimization problem involving $|S|$-many constraints. 
\medskip

Let $I_n = \{1,2,\dotsc, n\}$ and define, for any non-empty $S\subset I_n$,
\begin{equation}\label{def_lp_function}
\ell(S) = \inf \{r\geqslant 0 \colon \displaystyle\bigcap_{i\in S} E_r(\tilde c_i, A_i)\neq \emptyset\}\, .
\end{equation}
For the empty set, we use the standard LP-type convention $\ell(\varnothing)=-\infty$.
Thus, $\ell(S)$ is the first radius at which the ellipsoidal balls at $c_i$ intersect for the subset of indices $i\in S$. 
We set $\ell(S)=\infty$ whenever the intersection is empty for all $r\geqslant0$, but by \cref{optimal_radius_always_finite}, this never happens.  

Since we are only concerned with the intersection of finitely many ellipsoidal balls indexed by $S\subset I_n$, it is intuitively obvious that for $r>0$ sufficiently large, the intersection will always be non-empty and hence the optimal radius $r^* = \ell(S) <\infty$.

\begin{lemma}\label{optimal_radius_always_finite}
	For any $S\subset I_n$, we have $\ell(S)<\infty$.
\end{lemma}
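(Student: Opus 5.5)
The plan is to exhibit an explicit radius at which all ellipsoids indexed by $S$ contain a common point. The case $S=\varnothing$ is covered by the convention $\ell(\varnothing)=-\infty$, so we assume $S\neq\varnothing$ and pick any reference index $j\in S$. We take the candidate common point to be $x=\tilde c_j$. Since the set in~\eqref{def_lp_function} is then non-empty, $\ell(S)$ is finite once we show that $\tilde c_j$ lies in every $E_r(\tilde c_i,A_i)$ for $r$ large enough.

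For each $i\in S$, set $R_i\coloneqq \norm{\tilde c_j-\tilde c_i}_{A_i}=\sqrt{(\tilde c_j-\tilde c_i)^\top A_i(\tilde c_j-\tilde c_i)}$. This is a finite non-negative real number because $A_i\in S_{++}^d$ and the centers lie in $\R^d$. Now let $R\coloneqq\max_{i\in S}R_i$, a maximum over a finite set, hence finite.

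By \cref{def_ellipse_fixed_r}, we have $\tilde c_j\in E_R(\tilde c_i,A_i)$ for every $i\in S$, since $\norm{\tilde c_j-\tilde c_i}_{A_i}^2=R_i^2\leqslant R^2$. Therefore $\bigcap_{i\in S}E_R(\tilde c_i,A_i)\neq\emptyset$, so $R$ belongs to the set over which the infimum in~\eqref{def_lp_function} is taken, and $\ell(S)\leqslant R<\infty$.

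There is no real obstacle here; the only care needed is the empty-set convention and the observation that the ellipsoids are nested in $r$, which is not even required since we exhibit a specific radius directly. As a remark, one could alternatively invoke \cref{thm:mir_certificate}, since $\lambda^*$ exists by compactness of $\Delta^{|S|}$ and continuity of $F$, and it gives $\ell(S)=\sqrt{h(\lambda^*)}<\infty$. The elementary argument above is self-contained, however.
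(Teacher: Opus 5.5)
Your proof is correct and follows the paper's argument essentially verbatim. After handling $S=\varnothing$ via the convention, the paper also fixes an index $k\in S$ and sets $r=\max_{i\in S}\norm{\tilde c_k-\tilde c_i}_{A_i}$; it then observes that $\tilde c_k\in\bigcap_{i\in S}E_r(\tilde c_i,A_i)$, hence $\ell(S)\leqslant r<\infty$.
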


\begin{proof}
    If $S=\varnothing$, then $\ell(\varnothing)=-\infty<\infty$ by convention, so assume $S$ is non-empty and fix any $k\in S$.
    Setting $r=\max_{i\in S}\norm{\tilde c_k-\tilde c_i}_{A_i}$, we have $\norm{\tilde c_k-\tilde c_i}_{A_i}\leqslant r$ for every $i\in S$, so $\tilde c_k\in\bigcap_{i\in S}E_r(\tilde c_i,A_i)$.
    Hence the intersection is non-empty at radius $r$, and $\ell(S)\leqslant r<\infty$.
\end{proof}

\begin{theorem}\label{thm:lp_type}
	The function $\ell$ defined in \eqref{def_lp_function} is an LP-type function.
\end{theorem}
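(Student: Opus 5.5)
We verify the two axioms separately. Throughout, we use that for non-empty $S$ the infimum in \eqref{def_lp_function} is attained: by \cref{thm:mir_certificate} applied to the subfamily indexed by $S$ (or directly by the SOCP formulation of \cref{ssec_socp}), the radius $\ell(S)$ is finite by \cref{optimal_radius_always_finite}. Moreover, $\bigcap_{i\in S}E_{\ell(S)}(\tilde c_i,A_i)\neq\emptyset$, and the intersections are nested increasing in $r$.

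For monotonicity, let $A\subset B$. If $A=\varnothing$, then $\ell(A)=-\infty$ and there is nothing to prove. Otherwise, any point of $\bigcap_{i\in B}E_r(\tilde c_i,A_i)$ lies in $\bigcap_{i\in A}E_r(\tilde c_i,A_i)$. Hence the set of admissible radii for $B$ is contained in that for $A$, which gives $\ell(A)\leqslant\ell(B)$. The same argument with $B\subset S$ gives $\ell(B)\leqslant\ell(S)$.

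For locality, assume $A\subset B$, $i\in S$, and $\ell(A)=\ell(B)=\ell(A\cup\{i\})=:r_0$. The degenerate case $A=\varnothing$ needs separate handling, since then $\ell(\{i\})=0\neq-\infty$ makes the hypothesis vacuous; so assume $A\neq\varnothing$. The key step, and the main obstacle, is uniqueness of the intersection point at the critical radius: I claim that $K_A:=\bigcap_{j\in A}E_{r_0}(\tilde c_j,A_j)$ is a single point $\{p\}$. I would prove this by strict convexity. If $x\neq y$ both lie in $K_A$, then for each $j$ the function $z\mapsto\norm{z-\tilde c_j}_{A_j}^2$ is strictly convex, since $A_j\in S_{++}^d$. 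Hence at the midpoint $z=(x+y)/2$ we get $\norm{z-\tilde c_j}_{A_j}^2<r_0^2$ for all $j\in A$. With finitely many $j$, $z$ lies in $\bigcap_{j\in A}E_{r}(\tilde c_j,A_j)$ for some $r<r_0$, contradicting minimality of $r_0$. This requires $r_0>0$. If $r_0=0$, all the balls are the single centers, and $K_A$ is trivially a singleton. Alternatively, one can cite \cref{thm:mir_certificate}: $K_A\subseteq E_{\lambda^*,r_0}=\{m(\lambda^*)\}$.

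With uniqueness in hand, the rest is short. Since $B\supseteq A$, we have $K_B:=\bigcap_{j\in B}E_{r_0}(\tilde c_j,A_j)\subseteq K_A=\{p\}$, and $K_B$ is non-empty because $\ell(B)=r_0$ is attained. Hence $K_B=\{p\}$. Likewise, $\ell(A\cup\{i\})=r_0$ gives a point of $K_A\cap E_{r_0}(\tilde c_i,A_i)$, which must be $p$, so $p\in E_{r_0}(\tilde c_i,A_i)$. Therefore $p\in\bigcap_{j\in B\cup\{i\}}E_{r_0}(\tilde c_j,A_j)$, so $\ell(B\cup\{i\})\leqslant r_0$. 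Monotonicity gives the reverse inequality, so $\ell(B\cup\{i\})=\ell(B)$, as required.
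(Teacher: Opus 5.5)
Your proof is correct and follows the paper's argument. Monotonicity comes from inclusion of feasible radii. For locality, you show that $\bigcap_{j\in A}E_{r_0}(\tilde c_j,A_j)$ is a single point $p$, then use attainment at $r_0$ for $B$ and $A\cup\{i\}$ to place $p$ in every ball indexed by $B\cup\{i\}$, and $A=\varnothing$ is excluded because $\ell(A\cup\{i\})\geqslant 0>-\infty$.

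The only variation is how you prove the singleton claim: you use a direct strict-convexity midpoint argument, whereas the paper deduces it from $E_{\lambda_A^*,r}=\{m_A(\lambda_A^*)\}$ in \cref{thm:mir_radius}. Your version is more elementary and needs only attainment of the infimum, not the duality certificate.
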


\begin{proof}
	By \eqref{def_lp_function} and the empty-set convention following it, $\ell$ is in the correct form, as it goes $2^{I_n}\to \R\cup\{-\infty\}$ and $\R\cup\{-\infty\}$ is a totally-ordered set.
	We first prove monotonicity. Assume $A\subset B\subset I_n$. If $A=\varnothing$, then $\ell(A)=-\infty\leqslant \ell(B)$, so assume $A$ is non-empty. Whenever $\displaystyle\bigcap_{i\in B} E_r(\tilde c_i, A_i)\neq \emptyset$, the intersection over the smaller index set satisfies $\displaystyle\bigcap_{i\in A} E_r(\tilde c_i, A_i)\supseteq\bigcap_{i\in B} E_r(\tilde c_i, A_i)\neq\emptyset$. Hence every $r$ that is feasible for $B$ is feasible for $A$, so the infimum defining $\ell(A)$ is taken over a superset of that defining $\ell(B)$, giving $\ell(A)\leqslant \ell(B)$.
	
	Next, we show locality.
    It suffices to show that, under the relevant assumptions, $\ell(B)\geqslant \ell(B\cup \{i\})$ since $\ell(B)\leqslant \ell(B\cup \{i\})$ holds by monotonicity. 
    Suppose $\ell(A)=\ell(B)=\ell(A\cup \{i\})$, and write this common value as $r$.
    Since $\ell(\varnothing)=-\infty$ and $\ell(A\cup\{i\})\geqslant 0$, these equalities imply $A\neq\varnothing$.

    We claim that $\cap_{j\in A}E_r(\tilde c_j,A_j)$ is a singleton.
    Indeed, \cref{thm:mir_radius} applies verbatim to this non-empty subfamily after relabeling its indices. Let $\lambda_A^*$ be a minimizer over the simplex for that subfamily, and let $m_A(\lambda_A^*)$ denote the corresponding center. Since $r=\ell(A)$ is the optimal radius for this subfamily, \cref{thm:mir_radius} gives
    $E_{\lambda_A^*,r}=\{m_A(\lambda_A^*)\}$
    and the intersection $\cap_{j\in A}E_r(\tilde c_j,A_j)$ is non-empty. By the first containment in \cref{covering_set_is_ellipse}, this intersection is contained in $E_{\lambda_A^*,r}$, so it must equal this singleton. Thus, for
    $x_A\coloneqq m_A(\lambda_A^*)$,
    we have $\bigcap_{j\in A}E_r(\tilde c_j,A_j)=\{x_A\}$, a point independent of the choice of minimizer $\lambda_A^*$.
    Since $A\subset B$ and $\ell(B)=r$, applying \cref{thm:mir_radius} to the subfamily indexed by $B$ shows that $\cap_{j\in B}E_r(\tilde c_j,A_j)$ is non-empty at radius $r$. This intersection is contained in $\cap_{j\in A}E_r(\tilde c_j,A_j)$; hence it contains $x_A$.
    Similarly, since $\ell(A\cup\{i\})=r$, applying \cref{thm:mir_radius} to the subfamily indexed by $A\cup\{i\}$ shows that $\cap_{j\in A\cup\{i\}}E_r(\tilde c_j,A_j)$ is non-empty at radius $r$. This intersection is contained in $\cap_{j\in A}E_r(\tilde c_j,A_j)$; hence $x_A\in E_r(\tilde c_i,A_i)$.
    Therefore $x_A\in \bigcap_{j\in B\cup\{i\}}E_r(\tilde c_j,A_j)$, so $\ell(B\cup\{i\})\leqslant r=\ell(B)$.
    Together with monotonicity, this gives $\ell(B\cup\{i\})=\ell(B)$.
\end{proof}

We can also bound the combinatorial dimension of this LP-type problem, which is what governs the cost of the algorithms below.

\begin{proposition}\label{combinatorial_dimension}
	The LP-type problem $\ell$ of \eqref{def_lp_function} has combinatorial dimension at most $d+1$.
\end{proposition}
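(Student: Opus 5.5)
The plan is to show that every basis $B$ satisfies $|B|\leqslant d+1$, using a Helly-type argument on the sublevel structure of the convex functions $\phi_j(x)\coloneqq\norm{x-\tilde c_j}_{A_j}$. Let $B$ be a basis with $|B|\geqslant 2$ (smaller bases are trivially fine). Set $r\coloneqq\ell(B)$ and write $\phi_B(x)\coloneqq\max_{j\in B}\phi_j(x)$. By the SOCP formulation, $r=\min_x \phi_B(x)$. By \cref{thm:mir_radius}, or directly by coercivity, this minimum is attained at some point $x_B$.

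The key structural step uses the KKT analysis preceding \cref{kkt_centroid}. Let $\lambda^*$ be a minimizer of $F$ for the subfamily $B$. Then $x_B=m(\lambda^*)$ and $\norm{x_B-\tilde c_j}_{A_j}^2=r^2$ for every $j$ with $\lambda_j^*>0$. Moreover, since $x_B$ minimizes $\sum_j\lambda_j^*\phi_j^2$ (its gradient is $2\sum_j\lambda_j^*A_j(x_B-\tilde c_j)=2(A(\lambda^*)x_B+b(\lambda^*))=0$), we get the stationarity relation
\[
0=\sum_{j\in B}\lambda_j^*\,A_j(x_B-\tilde c_j)\,,
\]
with $\lambda^*\in\Delta^{|B|}$. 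So $0$ lies in the convex hull of the vectors $w_j\coloneqq A_j(x_B-\tilde c_j)$, for $j\in\operatorname{supp}\lambda^*$.

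Now apply Carathéodory's theorem in $\R^d$. Among these vectors, at most $d+1$ already contain $0$ in their convex hull. Let $B'\subseteq\operatorname{supp}\lambda^*\subseteq B$ be such a subset, with weights $\mu\in\Delta^{|B'|}$ satisfying $\sum_{j\in B'}\mu_j w_j=0$.

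We then need to show $\ell(B')=r$. Consider the convex function $\psi(x)\coloneqq\sum_{j\in B'}\mu_j\phi_j(x)^2$. Its gradient at $x_B$ is $2\sum_j\mu_jw_j=0$, so $x_B$ minimizes $\psi$ and $\psi(x)\geqslant\psi(x_B)=r^2$ for all $x$, since $\phi_j(x_B)=r$ on $B'$. Hence for any $x$ we have $\max_{j\in B'}\phi_j(x)^2\geqslant\psi(x)\geqslant r^2$, which gives $\ell(B')\geqslant r$. Monotonicity gives the reverse inequality, so $\ell(B')=\ell(B)$.

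Since $B$ is a basis, no proper subset attains the same value, so $B'=B$ and $|B|\leqslant d+1$.

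The main obstacle is the degenerate case $r=0$. Then all $w_j$ vanish, and Carathéodory trivially yields a single index $B'$ with $\ell(B')=0=r$, so $|B|\leqslant 1$ and the argument still goes through. I also have to check that a minimizer $\lambda^*$ exists for $F$ restricted to $B$; this follows from compactness of the simplex and continuity of $F$, as in \cref{strong_duality_for_p}.
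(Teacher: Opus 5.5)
Your proposal is correct and follows essentially the same route as the paper. Both arguments take the stationarity relation $\sum_{j}\lambda_j^* A_j(m(\lambda^*)-\tilde c_j)=0$ from $m(\lambda^*)=-A(\lambda^*)^{-1}b(\lambda^*)$, use Carath\'eodory to extract at most $d+1$ support indices, and bound below by the weighted quadratic $\sum_{j\in B'}\mu_j\norm{x-\tilde c_j}_{A_j}^2$, which is minimized at $m(\lambda^*)$, to get $\ell(B')\geqslant r$ and contradict the basis property; your separate treatment of $r=0$ is unnecessary, since the general argument already covers it.
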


\begin{proof}
	Let $B\subseteq I_n$ be a basis. If $\abs{B}\leqslant d+1$ we are done, so suppose $\abs{B}>d+1$. By \cref{thm:mir_radius}, which applies verbatim to the non-empty subfamily indexed by $B$ after relabeling, let $\lambda^*$ minimize $F$ over the probability simplex of $B$, and set $m^*=m(\lambda^*)$, $r=\ell(B)$; by \cref{kkt_centroid}, $\norm{\tilde c_i-m^*}_{A_i}=r$ for every $i$ in the support $T=\{i\in B:\lambda_i^*>0\}$. Since $m^*=-A(\lambda^*)^{-1}b(\lambda^*)$ and $b(\lambda^*)=-\sum_{i\in B}\lambda_i^* A_i\tilde c_i$,
	\[
	\sum_{i\in T}\lambda_i^*\, A_i(m^*-\tilde c_i)=0 ,
	\]
	which exhibits $0$ as a convex combination (as $\lambda^*\in\Delta^{B}$) of the vectors $w_i:=A_i(m^*-\tilde c_i)\in\R^d$, $i\in T$. By Carath\'eodory's theorem there are $T'\subseteq T$ with $\abs{T'}\leqslant d+1$ and weights $\beta_i\geqslant 0$, $\sum_{i\in T'}\beta_i=1$, such that $\sum_{i\in T'}\beta_i w_i=0$.

	We claim $\ell(T')=r$. Every $i\in T'$ satisfies $\norm{\tilde c_i-m^*}_{A_i}=r$, so $m^*\in\bigcap_{i\in T'}E_r(\tilde c_i,A_i)$ and thus $\ell(T')\leqslant r$. Conversely, the convex function $\phi(x)=\sum_{i\in T'}\beta_i\norm{x-\tilde c_i}_{A_i}^2$ has gradient $2\sum_{i\in T'}\beta_i A_i(x-\tilde c_i)$, which vanishes at $m^*$, so $m^*$ minimizes $\phi$ and, for every $x\in\R^d$,
	\[
	\max_{i\in T'}\norm{x-\tilde c_i}_{A_i}^2 \geqslant \phi(x) \geqslant \phi(m^*) = \sum_{i\in T'}\beta_i\norm{m^*-\tilde c_i}_{A_i}^2 = r^2 .
	\]
	Hence $\ell(T')\geqslant r$, giving $\ell(T')=r=\ell(B)$. As $\abs{T'}\leqslant d+1<\abs{B}$, the set $T'\subsetneq B$ is a proper subset with $\ell(T')=\ell(B)$, contradicting the definition of a basis. Therefore $\abs{B}\leqslant d+1$.
\end{proof}

In particular, the MIR of the whole family $I_n$ is determined by a subfamily of at most $d+1$ ellipsoids; this combinatorial-dimension bound is what makes the LP-type algorithms of \cite{clarkson,seidel,sharir_welzl,matousek_sharir_welzl,gaertner_subexp} effective on our problem.

Combining this bound with the per-subproblem cost gives an informal picture of the overall complexity. For fixed ambient dimension $d$, the LP-type algorithms evaluate $\ell(I_n)$ using an expected number of violation tests and basis computations that is linear in the number of ellipsoids $n$, the dependence on $d$ being governed by the chosen algorithm (the Matou\v sek--Sharir--Welzl analysis bounds it by a factor subexponential in the combinatorial dimension). Each such primitive solves an MIR subproblem on at most $d+1$ ellipsoids: it minimizes $F$ over a simplex of dimension at most $d$, and each inner iteration forms $A(\lambda)=\sum_i\lambda_i A_i$ (cost $O(d^2)$ per active term) and solves a $d\times d$ linear system through a Cholesky factorization (cost $O(d^3)$). Recovering $r^*$ from $\lambda^*$ via the closed-form expressions of \cref{thm:mir_certificate} again costs $O(d^3)$. Hence the method scales linearly in $n$ for fixed $d$, with the ambient dimension entering only through the dense linear algebra of each subproblem, consistent with the empirical behavior reported in \cref{sec:imp_exp}.

In \cref{ssec_convex,ssec_socp,ssec:lp}, we presented different approaches that, more or less geometrically, more or less directly, provide ways to compute the MIR of any finite family of growing, non-homogeneous ellipsoids in arbitrary ambient dimension.
Before comparing these various approaches in their practical performances (see \cref{sec:imp_exp}), we now briefly discuss why a tempting shortcut, using the minimal enclosing ellipsoid (MEE) (a technique commonly used when the ellipsoids are homogeneous), does not compute the MIR in general.

\subsection{Non-equivalence with the minimal enclosing ellipsoid}\label{ssec:diff_euclidean}

In the Euclidean case, we have that balls of radius $r>0$ with centers in the set $P$ intersect if and only if the minimal enclosing ball of $P$, MEB$(P)$, has radius at most $r$. 
When we generalize to the ellipsoidal setting, this equivalence does not hold unless all ellipsoids, including the MEE, have the same orientation matrix.
To be self-contained, we present a proof of this equivalence, even if it follows mutatis mutandis from the standard argument for the Euclidean case. 
The main result of this subsection is the proof, via counterexamples, that the equivalence fails for non-homogeneous ellipsoids.
This shows, in particular, that standard computational tools cannot in general be applied when the ellipsoids are non-homogeneous.

For a fixed orientation matrix $A$, the \define{homogeneous MEE} of a finite set $P$ is the smallest-radius ellipsoid of the form $E_\rho(y,A)$ that contains $P$.

\begin{lemma}\label{intersection mveb equivalence}
A set of $n$ homogeneous ellipsoids $E_r(x_i,A)$ of radius $r>0$ with centers in $P$ intersects if and only if the homogeneous MEE$(P)$ has radius at most $r$.
\end{lemma}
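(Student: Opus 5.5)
The argument rests on the symmetry of the Mahalanobis distance when all orientation matrices equal a single $A$. Because $\norm{x-y}_A=\norm{y-x}_A$, we have $y\in E_r(x_i,A)$ if and only if $x_i\in E_r(y,A)$. So a point $y$ lies in $\bigcap_{i} E_r(x_i,A)$ exactly when the single ellipsoid $E_r(y,A)$, which has the same orientation matrix, contains every center in $P$. Both directions of the lemma follow from this swap.

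For the forward direction, suppose $\bigcap_{i=1}^n E_r(x_i,A)\neq\emptyset$ and pick $y$ in it. By symmetry, $P\subseteq E_r(y,A)$, so $E_r(y,A)$ is a feasible homogeneous enclosing ellipsoid of radius $r$. The homogeneous MEE$(P)$ therefore has radius $\rho^*\leqslant r$.

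For the converse, suppose the homogeneous MEE$(P)$ has radius $\rho^*\leqslant r$. We need this radius to be attained by some center $y^*$, i.e.\ $P\subseteq E_{\rho^*}(y^*,A)$.
\begin{itemize}
\item The function $y\mapsto \max_i \norm{x_i-y}_A$ is continuous and coercive, since $\norm{\cdot}_A$ is a norm equivalent to the Euclidean norm for $A\in S_{++}^d$. Hence its minimum $\rho^*$ is attained at some $y^*$.
\item Alternatively, this is exactly the SOCP of \cref{ssec_socp} with $A_i=A$ for all $i$, and the same compactness argument gives attainment.
\end{itemize}
Then $\norm{x_i-y^*}_A\leqslant\rho^*\leqslant r$ for every $i$, so $E_{\rho^*}(y^*,A)\subseteq E_r(y^*,A)$ because the balls are nested in the radius. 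Applying symmetry again gives $y^*\in E_r(x_i,A)$ for all $i$, so the intersection is non-empty.

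The one step needing care is attainment of the MEE radius, since the definition says ``smallest-radius'' without stating that the infimum is achieved; coercivity settles it. One can also note that the homogeneous MEE radius coincides with $\ell(I_n)$ from \eqref{def_lp_function} for equal matrices, which would make the argument an instance of \cref{thm:mir_radius}. This is not needed, however: the proof is just the symmetry swap plus attainment. That symmetry breaks when the $A_i$ differ, which is what the subsequent counterexamples exploit.
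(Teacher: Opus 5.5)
Your proof is correct and follows the paper's argument: the symmetry $\norm{y-x_i}_A=\norm{x_i-y}_A$ turns a common point $y$ of the ellipsoids into the center of a radius-$r$ ellipsoid $E_r(y,A)\supseteq P$, and back. The one difference is that you explicitly prove the homogeneous MEE radius is attained, via coercivity of $y\mapsto\max_i\norm{x_i-y}_A$; the paper leaves this step implicit in its final ``equivalently''.
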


\begin{proof}
    Since the ellipsoids share the orientation $A$, the Mahalanobis norm $\norm{\cdot}_A$ is symmetric, so $\norm{y-x_i}_A\leqslant r$ for all $i$ is equivalent to $\norm{x_i-y}_A\leqslant r$ for all $i$. Hence $y\in\bigcap_{i=1}^n E_r(x_i,A)$ if and only if $E_r(y,A)$ contains every $x_i$, i.e., if and only if some radius-$r$ ellipsoid of orientation $A$ contains $P$; equivalently, the homogeneous MEE$(P)$ has radius at most $r$.
\end{proof}

Morally, the above proof works because, by homogeneity, we can transform all ellipsoids into balls. 
If at least two orientation matrices are different, there generally exists no single transformation mapping all ellipsoids into balls, and the equivalence need not hold.

\begin{proposition}\label{MEB_MIR_equivalence_fail}
For non-homogeneous ellipsoids, the equivalence of \cref{intersection mveb equivalence} can fail.
\end{proposition}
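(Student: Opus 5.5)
The plan is to disprove the equivalence with one explicit, fully computable family, chosen so that no reasonable reading of ``the MEE'' can rescue it. For non-homogeneous families the only natural candidates for the enclosing ellipsoid are homogeneous MEEs of $P$ taken with one of the given orientation matrices $A_j$. (The volume-minimal MEE of a finite set is degenerate for small $P$, as noted in \cref{ssec:related_work}.) So I will exhibit centers $P$ and distinct matrices $A_1,A_2$ such that, for some radius $r$:
\begin{itemize}
\item the homogeneous MEE of $P$ with orientation $A_1$ has radius at most $r$, but $\bigcap_i E_r(\tilde c_i,A_i)=\emptyset$;
\item the homogeneous MEE with orientation $A_2$ has radius strictly larger than the MIR.
\end{itemize}
Either item alone breaks one implication of \cref{intersection mveb equivalence}.

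The example I would take is in $\R^2$ with $n=2$, centers $\tilde c_1=(0,0)^\top$, $\tilde c_2=(1,0)^\top$, and $A_1=I$, $A_2=4I$. The balls are round but have different growth speeds, which is enough to destroy homogeneity. The MIR can be read off directly: $E_r(\tilde c_1,A_1)$ is the Euclidean disk of radius $r$ and $E_r(\tilde c_2,A_2)$ is the Euclidean disk of radius $r/2$. These meet if and only if $r+r/2\geqslant 1$, so $r^*=2/3$. As a cross-check I would confirm this value with \cref{thm:mir_certificate}. Here $A(\lambda)=(\lambda_1+4\lambda_2)I$ and $m(\lambda)=\bigl(4\lambda_2/(\lambda_1+4\lambda_2),0\bigr)^\top$. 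The KKT condition of \cref{kkt_centroid} requires $\norm{\tilde c_1-m}_{A_1}=\norm{\tilde c_2-m}_{A_2}$, which gives $m=(2/3,0)^\top$ and $h(\lambda^*)=4/9$.

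Next I compute the homogeneous MEEs of $P=\{\tilde c_1,\tilde c_2\}$. With orientation $I$ the smallest enclosing disk is centered at $(1/2,0)^\top$ with radius $1/2$. With orientation $4I$ it is $E_\rho(y,4I)$ with $2\norm{y-x_i}_2\leqslant\rho$, so the radius is $2\cdot\tfrac12=1$. Then:
\begin{itemize}
\item at $r=0.6$, the MEE with orientation $A_1$ has radius $1/2\leqslant r$, yet $0.6+0.3<1$, so the ellipsoids do not intersect;
\item at $r=2/3$, the ellipsoids intersect, but the MEE with orientation $A_2$ has radius $1>r$.
\end{itemize}
So both directions of the equivalence fail. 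The same numbers come out in $\R^1$ (intervals of half-widths $r$ and $r/2$), and I might mention this as the minimal instance.

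The main obstacle is not the computation but fixing precisely which ``equivalence'' is being refuted, since the MEE with respect to ``the'' orientation is ambiguous when the $A_i$ differ. I would handle this by checking every candidate orientation among the $A_j$, as above. I would also remark that any single alternative choice of orientation $B$ gives an MEE radius that scales linearly with $\norm{\cdot}_B$ and is independent of the individual $A_i$. Two families with the same $P$ but different $(A_1,A_2)$, for instance the swap $A_1=4I$, $A_2=I$, which has MIR $2/3$ with the intersection point at $(1/3,0)^\top$ instead, or a scaling such as $(A_1,A_2)=(I,9I)$ with MIR $3/4$, have different MIRs. Hence no fixed orientation choice can reproduce the MIR in general. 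Optionally I would add a genuinely anisotropic example in $\R^2$, with rotated matrices, to match \cref{fig_growing_triple}, but the isotropic one already suffices for the proposition.
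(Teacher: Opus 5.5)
Your proof is correct, but you read ``the equivalence'' differently from the paper, so your route is different. The paper takes the MEE to be the volume-minimal enclosing ellipsoid with free orientation. For $n\leqslant d$ it notes that this ellipsoid is not attained among full-dimensional ellipsoids. For $n\geqslant d+1$ it gives an anisotropic three-point example in $\R^2$: the MEE is the Steiner circumellipse, written as $E_1(c,A)$, while the MIR, found numerically by minimizing $F$, is about $4.87$.

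You instead fix the orientation, as in the paper's definition of the homogeneous MEE, and test each given $A_j$. With $A_1=I$ and $A_2=4I$ everything is exact: $r^*=2/3$, and the MEE radii are $1/2$ and $1$. Each choice of orientation breaks a different implication, and unequal growth speeds alone, with no rotation, already destroy the equivalence.

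What your route buys is an intrinsic MEE radius. For a free-orientation ellipsoid the radius is only a normalization, since $E_1(c,A)=E_\rho(c,\rho^2A)$ for every $\rho>0$, so the paper's ``$1$ versus $4.87$'' rests on a convention. What the paper's route buys is that it addresses the volume-minimal MEE, which is the notion it actually calls MEE. It also isolates the non-attainment obstruction for $n\leqslant d$, and your planar example ($n=2=d$) falls under that case.

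Two of your side claims are overstated. The volume-minimal MEE is also a natural candidate; it is the one the paper uses. And your closing remark only rules out orientations $B$ chosen independently of the $A_i$. As the paper itself notes, a family-dependent rescaling can always be made to match the MIR.
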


We exhibit the failure in two ways.
When $n\leq d$, the affine hull of $P$ has dimension at most $n-1<d$, so the volume infimum of any full-dimensional positive-definite enclosing ellipsoid is zero and the MEE$(P)$ is not attained in that class.
When $n\geq d+1$, finding the MEE$(P)$ can be well-posed, but it is possible to obtain an MEE of radius $r$ for a set of points $P$ whose ellipsoids intersect only at radius $r'>r$.

\begin{proof}
Let $P=\{x_1,x_2\}\in\R^2$.
Take $\ell$ as the medial axis between $x_1$ and $x_2$.
Then we can construct an infinite family of (concentric) ellipsoids passing through $x_1,x_2$, and a point $p\in\ell$ whose volumes tend to zero when $p$ tends to the mid-point between $x_1$ and $x_2$.
Hence the volume infimum among full-dimensional positive-definite ellipsoids containing $P$ is zero and no MEE$(P)$ is attained in that class.
The same flattening argument applies to any $n\leq d$, since $P$ then lies in a proper affine subspace of $\R^d$.

We now show with a counterexample that the equivalence does not hold if $n\geq d+1$. 
The construction is depicted in \cref{fig:MEB_MIR_equivalence_fail}.
Let $d=2$, and take $x_1=(0,0)$, $x_2=(1,0)$ and $x_3=(0,1)$, with orientation matrices $A_1=\begin{bmatrix} 36 & 28 \\ 28 & 36 \end{bmatrix}$, $A_2=\begin{bmatrix} 64 & 0 \\ 0 & 16 \end{bmatrix}$, and $A_3=\begin{bmatrix} 16 & 0 \\ 0 & 64 \end{bmatrix}$, respectively. 
The MIR for $P=\{(x_1,A_1),(x_2,A_2),(x_3,A_3)\}$ is $\approx 4.87$, obtained by minimizing $F(\lambda)$ over $\Delta^3$ as in \cref{thm:mir_certificate}.
However, since ellipses are convex, the MEE$(P)$ coincides with the MEE of the triangle $\operatorname{conv}\{x_1,x_2,x_3\}$, namely its Steiner circumellipse. In this example it has center $c=(\dfrac{1}{3},\dfrac{1}{3})$, orientation matrix $A = \begin{bmatrix} 3 & 3/2 \\ 3/2 & 3 \end{bmatrix}$, and radius $1$: existence and uniqueness of the minimum-area enclosing ellipse are classical (see, e.g., \cite{Behrend1938}), and for a nondegenerate triangle the unique minimizer is its Steiner circumellipse. The displayed center is the triangle centroid, and direct substitution gives $(x_i-c)^\top A(x_i-c)=1$ for each $i=1,2,3$, so $E_1(c,A)$ is precisely the MEE of the three vertices.
\end{proof}

\cref{fig:MEB_MIR_equivalence_fail} gives a visual interpretation of the lack of equivalence for $n$ non-homogeneous ellipsoids in $\mathbb{R}^d$ with $n\geq d+1$. 
We remark that, as discussed above, one could always rescale the orientation matrices of the ellipsoids to ensure that the intersection radius is the same as the MEE. 
However, this is not really solving the problem, as one would need first to compute the radius of the MEE, and then find the rescaling factor ensuring that the input ellipsoids are intersecting for that value, which is not efficient. 
Moreover, this fix does not help if $n\leq d$, since in this case the MEE is not attained in the class of full-dimensional positive-definite ellipsoids (the volume infimum is zero, as shown above).

\begin{figure}[h!]
    \centering
    \includegraphics[scale=.65]{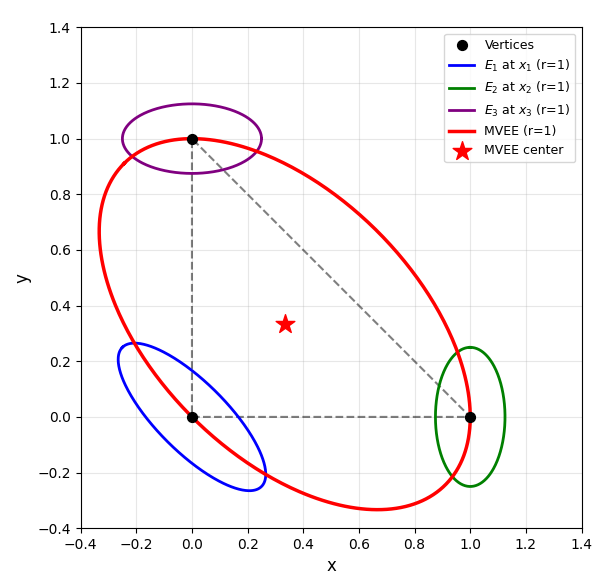}
    \includegraphics[scale=.65]{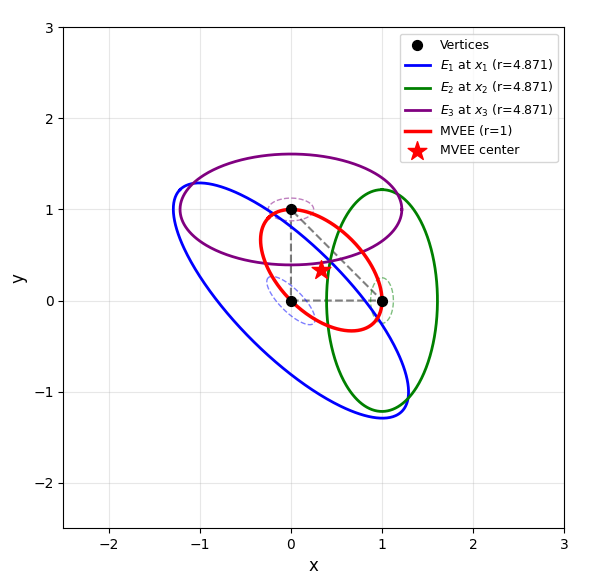}
    \caption{Graphic of the counterexample for $n=3$, $d=2$ in the proof of \cref{MEB_MIR_equivalence_fail}. The MEE (left) has radius $1$, while the three ellipsoids (at radius $1$ on the left) do not intersect until they reach radius $\approx 4.87$ (right). Note that the axes are scaled differently in the two plots.}
    \label{fig:MEB_MIR_equivalence_fail}
\end{figure}

\section{Implementation and experiments}\label{sec:imp_exp}

\subsection{Implementation details}\label{ssec:implementation}

To test the efficiency of our theory, we implemented the results of \cref{sec:mir} in \texttt{c++}. 
We provide eight implementations, divided into two categories: the \define{raw} methods, computing the MIR of $P$ directly, and the \define{LP} methods, which run the MIR computation after having found an LP-basis of $P$. 

In the former category, we used our theoretical results from \cref{ssec_convex} to implement the objective function $F(\lambda)$ and all the related matrix algebra directly. 
We then pass this to three solvers: SLSQP \cite{SLSQP}, projected gradient descent (PGD), and Cauchy \cite{chok2025convexoptimizationprobabilitysimplex}. 
SLSQP was already implemented in the \texttt{c++} library \texttt{NLopt} \cite{NLopt}, and takes as input $F(\lambda)$ with the constraints. 
On the other hand, we implemented the solvers PGD and Cauchy from scratch using the pseudocode presented in \cite{chok2025convexoptimizationprobabilitysimplex} for Cauchy, and the standard gradient descent algorithm coupled with the projection algorithm from \cite{wang2013projectionprobabilitysimplexefficient} for PGD. 
Both these solvers require only $F(\lambda)$ as an input. 
As an additional raw solver, following the results in \cref{ssec_socp}, we have used the implementation of the SOCP \cite{alglib_library} based on the Interior-Point Method \cite[p. 156]{Boyd_Vandenberghe_2004}.

In the LP category, we implemented in our setting two standard LP methods: Clarkson \cite{clarkson} and Seidel \cite{seidel}. 
These algorithms take the MIR to be the objective function $\ell(P)$ as a function of the set of indices of the points in $P$. 
As a subroutine, both methods need a raw solver to find the MIR of the subset of indices on which it is enough to compute it. 
Given its strong overall performance, particularly in the moderate-to-high ambient-dimension regime (see below), we used SLSQP for this subroutine.
We also combined both LP algorithms with the SOCP implementation. 

\subsection{Experimental results}\label{ssec:experiments}

We tested these eight implementations on randomly generated sets of ellipsoids, first fixing the ambient dimension $d$ for an increasing number $n$ of ellipsoids, then fixing $n$ for an increasing ambient dimension $d$, and finally for every pair $(n,d)$ of \# ellipsoids and ambient dimension for $n,d=2,4,8,16,32$. 
For each test, we took the average runtime of 30 randomly generated $n$ ellipsoids in $\R^d$. 
Each random instance is generated as follows.
The $n$ centers $\tilde c_i$ are drawn independently and uniformly from the cube $[-1,1]^d$.
For each ellipsoid we form a covariance $\Sigma_i = Q_i \Lambda_i Q_i^\top$, where $Q_i$ is a random orthonormal matrix obtained from the QR factorization of a matrix with i.i.d.\ standard Gaussian entries and $\Lambda_i$ is diagonal with entries drawn log-uniformly from $[\tfrac14, 4]$; the orientation matrix is then $A_i = \Sigma_i^{-1}$ and the base radius is fixed to $1$.
The same $n$ ellipsoids are passed to all eight implementations within a trial, so that runtimes and computed radii are directly comparable, and the random seed is a fixed function of $(n,d)$ and the trial index, so the benchmark is reproducible from the scripts at \cite{github_repo}.

Each timing table was produced from an independent benchmark invocation, so overlapping $(n,d)$ cells across different tables reflect separate timing measurements on bit-identical instances; the leading significant figure should be read as method-discriminating, but sub-$0.05$~ms cells can vary by tens of percent and even $\sim\!1$~ms cells by $\approx\!15\%$ between tables due to timing noise. Where two methods tie at the reported precision, the bolded entry is the one that came in strictly lower in that particular run.

We report all timing results to three significant figures.
While we did not tabulate these radii, we also verified that the computed values agreed across all eight implementations to about three significant figures, consistent with the solver tolerances. This residual spread is expected rather than a sign of error: each method reports the smallest radius at which its computed center lies in all $n$ ellipsoids, which is an upper bound on the true minimal intersection radius $r^*$ whose tightness depends on how accurately the underlying solver locates the optimal center.

All tests were executed on a 2022 MacBook Pro, Apple M2 Chip (CPU and GPU), 8 CPU cores (4 performance, 4 efficiency), 10 GPU cores, 16 GB of RAM, macOS Sequoia 15.6.1, compiled with clang++ 17.0.0, optimization flag -O3 -DNDEBUG, cmake version 4.1.2. 
Both the implementation and the scripts to generate the experiments are available at \cite{github_repo}. 

The results for the mean runtime are displayed in \cref{tab:runtime_mean_d2d3}, \cref{tab:runtime_mean_n2n3}, and \cref{tab:runtime_mean_d2d4d8d16} and \cref{tab:runtime_mean_d32_st32} (left), respectively, while the corresponding standard deviations are presented in \cref{tab:runtime_sd_d2d3}, \cref{tab:runtime_sd_n2n3}, and \cref{tab:runtime_st_d2d4d8d16} and \cref{tab:runtime_mean_d32_st32} (right). 
To better visualize the different behaviors, we plotted them in \cref{fig_fixed_d_2_3} for \cref{tab:runtime_mean_d2d3}, in \cref{fig_fixed_n_2_3} for \cref{tab:runtime_mean_n2n3}, and in \cref{fig:fixed_n24816_vs_d,fig:fixed_d24816_vs_n,fig:d32_n32} for \cref{tab:runtime_mean_d2d4d8d16} and \cref{tab:runtime_mean_d32_st32} (left).

The raw SLSQP method scales well in ambient dimension for any fixed $n$ and is the most consistent winner from $d=8$ upward across the tested values of $n$; it is also the bolded best at several low-$d$, moderate-$n$ cells in \cref{tab:runtime_mean_d2d4d8d16}. It is outperformed at $n=2$ across all tested $d$ by both LP-Clarkson variants (LP-Clarkson-SOCP is the bolded winner, with LP-Clarkson-SLSQP tying to the printed precision at $d=4,8,16$), and at low ambient dimension with large $n$ by Raw-PGD (e.g., $n=16,32$ at $d=2$ in \cref{tab:runtime_mean_d2d4d8d16}, and the $n=64,128,256$ columns at $d=2$ in \cref{tab:runtime_mean_d2d3} of \cref{app:runtime}).
The raw PGD and Cauchy methods perform similarly in moderate $(n,d)$, typically staying in the middle of the pack; at fixed $n=2,3$ they degrade sharply with $d$ and are in fact the worst-performing methods in some high-$d$ cells (e.g., Raw-Cauchy at $(n,d)=(2,128)$ and Raw-PGD at $(n,d)=(2,256)$ in \cref{tab:runtime_mean_n2n3}).
The raw SOCP implementation is never the most efficient and is often among the slower methods.

The behavior of the two LP-type algorithms is quite different.
LP-Seidel paired with the SOCP solver is the worst-performing combination in the moderate $(n,d)$ regime; at low ambient dimension with very large $n$, however, Raw-SLSQP's superlinear scaling overtakes it (e.g., Raw-SLSQP $869$~ms vs LP-Seidel-SOCP $14.3$~ms at $(n,d)=(256,2)$ in \cref{tab:runtime_mean_d2d3}).
When paired with SLSQP, LP-Seidel is more competitive and is the bolded winner for $n\geqslant 64$ at $d=2$ in \cref{tab:runtime_mean_d2d3}.

LP-Clarkson with the SOCP solver is generally slower than the SLSQP variant, but is the bolded winner at $n=2$ across the tested ambient dimensions in \cref{tab:runtime_mean_d2d4d8d16}; for $n=3$, however, the SOCP variant is competitive only at large $d$ and is several times slower than the best method at small $d$ (e.g., $0.094$~ms versus Raw-SLSQP's $0.012$~ms at $(n,d)=(3,2)$ in \cref{tab:runtime_mean_n2n3}).
Paired with SLSQP, LP-Clarkson is very efficient: it is bolded at $n=2,4$ from $d=4$ onward in \cref{tab:runtime_mean_d2d4d8d16}, often within a small factor of Raw-SLSQP and at times better.

\begin{table}[H]
\caption{Mean runtime (ms) for fixed $d=2,4,8,16$. Best performance for each $n$ is in bold. ``K'' stands for thousand.}
\label{tab:runtime_mean_d2d4d8d16}
\begin{adjustbox}{width=\columnwidth,center}
\begin{tabular}{lrrrrrrrrrr}
\toprule
\multirow{2}{*}{\diagbox[width=7.5em,innerleftsep=10pt, innerrightsep=5pt, height=2.9em]{\adjustbox{scale=.8}{Method}}{\adjustbox{scale=.8}{\# Ellipsoids}}}
& \multicolumn{5}{c}{$d=2$} & \multicolumn{5}{c}{$d=4$} \\
 \cmidrule(l{3pt}r{3pt}){2-6} \cmidrule(l{3pt}r{3pt}){7-11}
 & $n=2$ & $n=4$ & $n=8$ & $n=16$ & $n=32$  & $n=2$ & $n=4$ & $n=8$ & $n=16$ & $n=32$ \\
\midrule
Raw-SLSQP & 0.034 & \textbf{0.016} & \textbf{0.041} & 0.189 & 1.17 & 0.006 & \textbf{0.012} & \textbf{0.049} & \textbf{0.249} & 1.43 \\
Raw-PGD & 0.107 & 0.044 & 0.117 & \textbf{0.125} & \textbf{0.354} & 0.021 & 0.061 & 0.181 & 0.398 & \textbf{0.786} \\
Raw-Cauchy & 0.152 & 0.110 & 0.226 & 0.810 & 0.501 & 0.030 & 0.223 & 0.186 & 2.34 & 0.957 \\
Raw-SOCP & 0.192 & 0.122 & 0.135 & 0.240 & 0.644  & 0.081 & 0.132 & 0.303 & 0.851 & 3.43 \\
LP-Seidel-SLSQP & 0.032 & 0.064 & 0.107 & 0.243 & 0.472  & 0.013 & 0.072 & 0.263 & 1.08 & 3.45 \\
LP-Clarkson-SLSQP & 0.005 & 0.026 & 0.067 & 0.191 & 0.911  & 0.002 & 0.016 & 0.102 & 0.311 & 1.55 \\
LP-Seidel-SOCP & 0.448 & 0.744 & 1.17 & 2.34 & 4.43 & 0.213 & 0.873 & 3.22 & 10.50 & 29.90 \\
LP-Clarkson-SOCP & \textbf{0.004} & 0.173 & 0.276 & 0.389 & 0.912 & \textbf{0.002} & 0.134 & 0.693 & 2.59 & 4.84 \\
\midrule
& \multicolumn{5}{c}{$d=8$} & \multicolumn{5}{c}{$d=16$} \\
 \cmidrule(l{3pt}r{3pt}){2-6} \cmidrule(l{3pt}r{3pt}){7-11}
 & $n=2$ & $n=4$ & $n=8$ & $n=16$ & $n=32$  & $n=2$ & $n=4$ & $n=8$ & $n=16$ & $n=32$ \\
\midrule
Raw-SLSQP & 0.008 & 0.019 & \textbf{0.055} & \textbf{0.267} & \textbf{1.74}  & 0.017 & 0.035 & \textbf{0.093} & \textbf{0.385} & \textbf{2.20} \\
Raw-PGD & 0.076 & 0.184 & 0.556 & 1.49 & 2.22  & 0.218 & 0.557 & 2.48 & 6.22 & 10.20 \\
Raw-Cauchy & 0.079 & 0.66 & 0.676 & 0.957 & 3.85  & 0.131 & 0.59 & 2.34 & 3.91 & 9.07 \\
Raw-SOCP & 0.13 & 0.313 & 0.979 & 3.81 & 20.60 & 0.354 & 1.08 & 4.52 & 22.9 & 147.00 \\
LP-Seidel-SLSQP & 0.016 & 0.125 & 0.883 & 8.46 & 46.30  & 0.028 & 0.267 & 3.71 & 66.40 & 909.00 \\
LP-Clarkson-SLSQP & 0.002 & \textbf{0.014} & 0.115 & 0.571 & 2.64  & 0.002 & \textbf{0.015} & 0.256 & 1.16 & 3.85 \\
LP-Seidel-SOCP & 0.298 & 1.79 & 12.9 & 125.00 & 681.00  & 0.623 & 5.66 & 102.00 & 2.33K & 37.50K \\
LP-Clarkson-SOCP & \textbf{0.002} & 0.147 & 1.71 & 8.28 & 69.60  & \textbf{0.002} & 0.256 & 10.10 & 61.20 & 239.00 \\
\bottomrule
\end{tabular}
\end{adjustbox}
\end{table}

\begin{figure}[H]
\centering
\includegraphics[scale=.427]{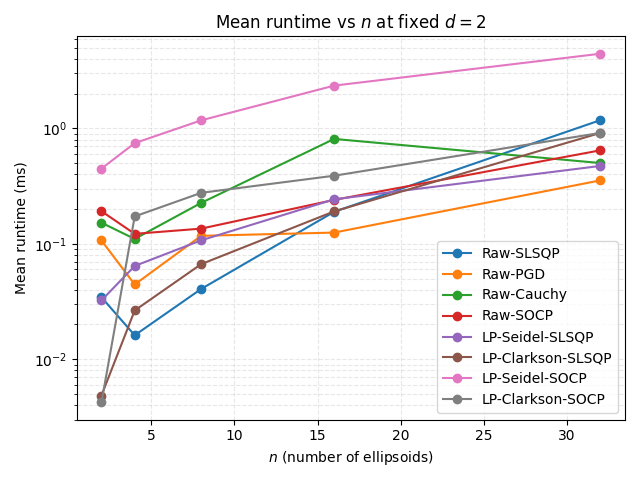}
\includegraphics[scale=.427]{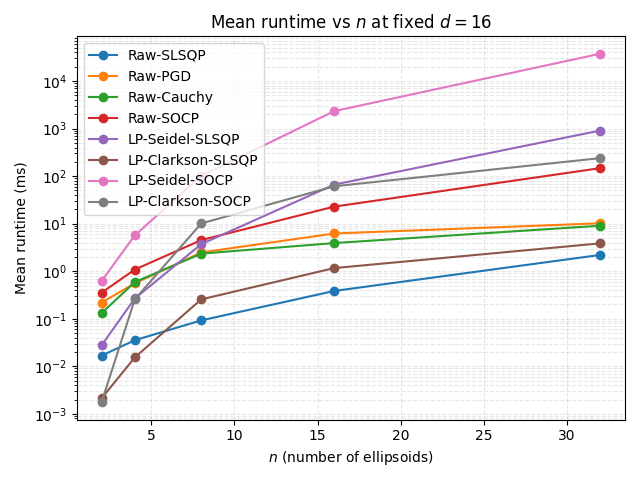}
\caption{Mean runtime for increasing number of ellipsoids $n$ at $d=2$ (left) and $d=16$ (right); the $d=4,8$ panels are in \cref{fig:fixed_d48_vs_n}.}
\label{fig:fixed_d24816_vs_n}
\end{figure}

The complete set of runtime tables and plots for every tested $(n,d)$, including the per-method heatmaps, is collected in \cref{app:runtime}; \cref{tab:runtime_mean_d2d4d8d16} and \cref{fig:fixed_d24816_vs_n} above show a representative slice.

\section{Conclusion and future directions}

In this work, we have presented three theoretical methods to compute the Minimal Intersection Radius for an arbitrarily-sized collection of growing, non-homogeneous ellipsoids in an arbitrary ambient dimension.
The first two methods are direct solvers, while the third shows that the task is an LP-type problem.
We have also implemented \cite{github_repo} our theoretical results and compared their performances on random inputs in \cref{ssec:experiments}.
We found that Raw-SLSQP and LP-Clarkson with the SLSQP certificate are the most consistently strong methods in the moderate-to-high ambient-dimension regime that motivates our intended use, so we will most likely focus on these SLSQP-based combinations in the future; LP-Clarkson-SOCP is competitive at $n=2$, and Raw-PGD is strong at low $d$ with large $n$.

These results can now be used in any procedure that requires efficient computation of the MIR of ellipsoids, for example, the construction of anisotropic \v Cech complexes, i.e., \v Cech complexes in non-Euclidean spaces, where the distances are given by ellipsoids and not balls.
For this use-case, we are especially interested in the intersection of $3$, possibly $4$ ellipsoids, since that is what the current capability of the barcode computation and the \v Cech complexes require to study not just the mutual intersections.
On the tested grid, the best methods (Raw-SLSQP and LP-Clarkson-SLSQP) stay within single-digit milliseconds (e.g., Raw-SLSQP at $(n,d)=(32,32)$ takes $3.06$~ms in \cref{tab:runtime_mean_d32_st32}), meeting the desideratum set in \cref{sec:intro}.
While we did not include the tests in this work, we also checked the runtime for the intersection of a few ellipsoids in a very high ambient dimension ($d=500$), finding that several seconds are needed.
Therefore, to use the results of this work to study manifold covering, where the ambient dimension is similarly high, we will need to further optimize our implementations, possibly drawing on additional geometric insights.

\bibliography{paper_bibliography}

\clearpage
\appendix
\section{Notation}\label{app:notation}

\cref{tab:notation} collects the main symbols used throughout \cref{sec:mir}; each is defined in detail where it first appears in the main text.

\begin{table}[htbp]
\centering
\begin{tabular}{@{}l p{0.72\columnwidth}@{}}
\toprule
Symbol & Meaning \\
\midrule
$E_r(\tilde c, A)$ & Growing ellipsoid (Mahalanobis ball) of radius $r$, center $\tilde c$, orientation matrix $A$. \\
$\tilde c_i,\ A_i$ & Center and orientation matrix of the $i$-th input ellipsoid. \\
$\norm{z}_A$ & Mahalanobis norm $\sqrt{z^\top A z}$ induced by $A\in S_{++}^d$. \\
$f_i(x;r)$ & Defining quadratic, with $E_r(\tilde c_i,A_i)=\{x: f_i(x;r)\leqslant 0\}$. \\
$\Delta^n$ & Probability simplex $\{\lambda\geqslant 0:\ \textstyle\sum_i\lambda_i=1\}$. \\
$A(\lambda),\ b(\lambda)$ & $\sum_i\lambda_i A_i$ and $\sum_i\lambda_i b_i$, where $b_i=-A_i\tilde c_i$. \\
$m(\lambda)$ & Center $-A(\lambda)^{-1}b(\lambda)$ of the Lagrangian covering ellipsoid. \\
$E_{\lambda,r}$ & Lagrangian covering ellipsoid (\cref{covering_set_is_ellipse}). \\
$\gamma_i(r),\ \gamma(\lambda;r)$ & $\tilde c_i^\top A_i\tilde c_i-r^2$ and $\sum_i\lambda_i\gamma_i(r)$. \\
$g(\lambda;r)$ & Dual function $\gamma(\lambda;r)-b(\lambda)^\top A(\lambda)^{-1}b(\lambda)$. \\
$h(\lambda),\ F(\lambda)$ & $h(\lambda)=\sum_i\lambda_i\norm{\tilde c_i-m(\lambda)}_{A_i}^2$ and objective $F=-h$. \\
$\ell(S)$ & MIR of the subfamily indexed by $S$ (the LP-type set function). \\
$r^*$ & The minimal intersection radius (MIR). \\
\bottomrule
\end{tabular}
\caption{Main notation used throughout \cref{sec:mir}.}
\label{tab:notation}
\end{table}

\section{Detailed runtime results}\label{app:runtime}

For completeness, we collect here the full runtime tables and plots for all tested $(n,d)$.

\begin{table}[h!]
\caption{Mean runtime (ms) for fixed $d=2$ (top) and $d=3$ (bottom). Best performance for each $n$ is in bold.}
\label{tab:runtime_mean_d2d3}
\begin{adjustbox}{max width=\columnwidth,center}
\begin{tabular}{lrrrrrrrr}
\toprule
\multirow{2}{*}{\diagbox[width=7.5em,innerleftsep=10pt, innerrightsep=5pt, height=2.9em]{\adjustbox{scale=.8}{Method}}{\adjustbox{scale=.8}{\# Ellipsoids}}}
& \multicolumn{8}{c}{$d=2$} \\
 \cmidrule(l{3pt}r{3pt}){2-9}
 & $n=2$ & $n=4$ & $n=8$ & $n=16$ & $n=32$ & $n=64$ & $n=128$ & $n=256$ \\
\midrule
Raw-SLSQP & 0.045 & \textbf{0.017} & \textbf{0.041} & 0.197 & 1.01 & 8.47 & 74.80 & 869.00 \\
Raw-PGD & 0.109 & 0.0443 & 0.125 & \textbf{0.139} & \textbf{0.380} & 1.19 & 2.58 & 3.90 \\
Raw-Cauchy & 0.175 & 0.118 & 0.255 & 0.987 & 0.60 & 1.92 & 6.91 & 17.40 \\
Raw-SOCP & 0.201 & 0.125 & 0.138 & 0.262 & 0.652 & 2.15 & 9.18 & 51.10 \\
LP-Seidel-SLSQP & 0.035 & 0.068 & 0.122 & 0.291 & 0.559 & \textbf{1.07} & \textbf{2.31} & \textbf{2.73} \\
LP-Clarkson-SLSQP & 0.005 & 0.029 & 0.071 & 0.208 & 0.849 & 4.88 & 11.70 & 24.20 \\
LP-Seidel-SOCP & 0.461 & 0.770 & 1.25 & 2.57 & 4.45 & 7.31 & 13.60 & 14.30 \\
LP-Clarkson-SOCP & \textbf{0.004} & 0.183 & 0.281 & 0.416 & 0.826 & 2.16 & 3.57 & 5.69 \\
\midrule
& \multicolumn{8}{c}{$d=3$} \\
 \cmidrule(l{3pt}r{3pt}){2-9}
Raw-SLSQP & 0.006 & \textbf{0.013} & \textbf{0.043} & 0.223 & 1.22 & 9.64 & 94.60 & 913.00 \\
Raw-PGD & 0.021 & 0.069 & 0.207 & \textbf{0.211} & \textbf{0.652} & \textbf{1.73} & \textbf{4.36} & 10.20 \\
Raw-Cauchy & 0.025 & 0.067 & 0.573 & 0.722 & 1.69 & 5.44 & 11.90 & \textbf{7.39} \\
Raw-SOCP & 0.081 & 0.112 & 0.195 & 0.468 & 1.41 & 6.47 & 36.20 & 257.00 \\
LP-Seidel-SLSQP & 0.015 & 0.053 & 0.191 & 0.524 & 1.23 & 3.05 & 6.65 & 13.80 \\
LP-Clarkson-SLSQP & 0.003 & 0.021 & 0.077 & 0.279 & 0.99 & 6.78 & 52.20 & 101.00 \\
LP-Seidel-SOCP & 0.227 & 0.649 & 1.87 & 4.59 & 9.54 & 21.00 & 41.30 & 72.40 \\
LP-Clarkson-SOCP & \textbf{0.002} & 0.144 & 0.378 & 0.894 & 1.67 & 6.41 & 58.00 & 53.80 \\
\bottomrule
\end{tabular}
\end{adjustbox}
\end{table}

\begin{figure}[h!]
\centering
\includegraphics[scale=.451]{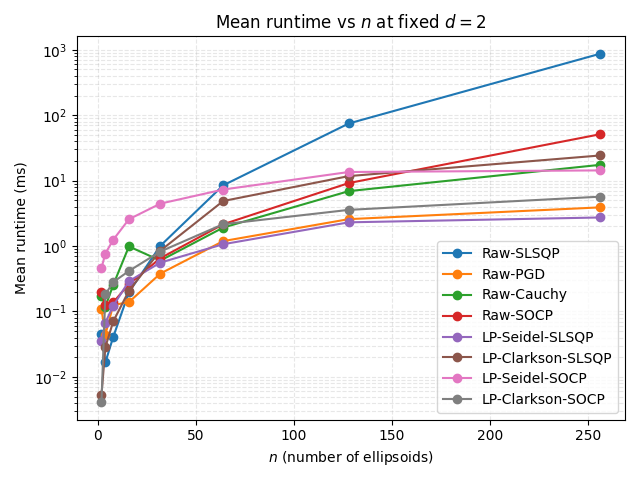}
\includegraphics[scale=.451]{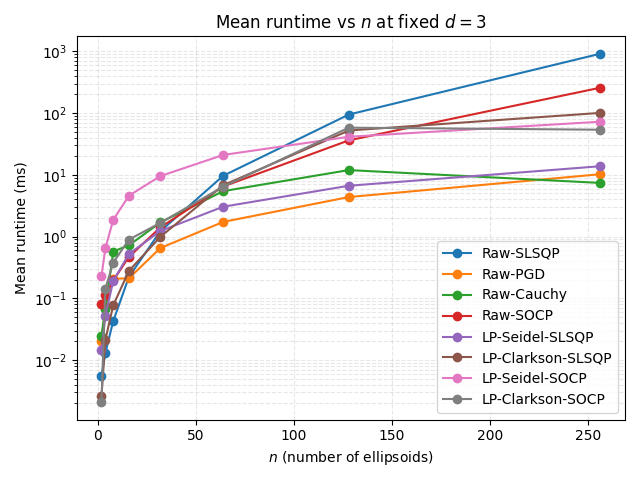}
\caption{Mean runtime for increasing $n$ for fixed $d=2,3$.}
\label{fig_fixed_d_2_3}
\end{figure}

\begin{table}
\caption{Mean runtime (ms) for fixed $n=2$ (top) and $n=3$ (bottom). Best performance for each $d$ is in bold.}
\label{tab:runtime_mean_n2n3}
\begin{adjustbox}{max width=\columnwidth,center}
\begin{tabular}{lrrrrrrrr}
\toprule
\multirow{2}{*}{\diagbox[width=7.5em,innerleftsep=10pt, innerrightsep=5pt, height=2.9em]{\adjustbox{scale=.8}{Method}}{\adjustbox{scale=.8}{Dimension}}}
& \multicolumn{8}{c}{$n=2$} \\
 \cmidrule(l{3pt}r{3pt}){2-9}
 & $d=2$ & $d=4$ & $d=8$ & $d=16$ & $d=32$ & $d=64$ & $d=128$ & $d=256$ \\
\midrule
Raw-SLSQP & 0.047 & 0.007 & 0.010 & 0.017 & 0.048 & 0.189 & 0.746 & 4.25 \\
Raw-PGD & 0.114 & 0.027 & 0.086 & 0.219 & 1.02 & 5.93 & 12.3 & 438.00 \\
Raw-Cauchy & 0.168 & 0.039 & 0.087 & 0.135 & 0.947 & 4.82 & 241 & 210 \\
Raw-SOCP & 0.196 & 0.105 & 0.142 & 0.352 & 1.28 & 6.14 & 34.3 & 226 \\
LP-Seidel-SLSQP & 0.036 & 0.018 & 0.019 & 0.030 & 0.074 & 0.269 & 1.01 & 5.51 \\
LP-Clarkson-SLSQP & 0.005 & 0.003 & 0.003 & \textbf{0.003} & 0.004 & 0.006 & 0.027 & 0.096 \\
LP-Seidel-SOCP & 0.468 & 0.274 & 0.326 & 0.628 & 1.89 & 8.25 & 42.40 & 265.00 \\
LP-Clarkson-SOCP & \textbf{0.004} & \textbf{0.003} & \textbf{0.002} & 0.0033 & \textbf{0.003} & \textbf{0.006} & \textbf{0.026} & \textbf{0.091} \\
\midrule
& \multicolumn{8}{c}{$n=3$} \\
 \cmidrule(l{3pt}r{3pt}){2-9}
Raw-SLSQP & \textbf{0.012} & 0.011 & 0.013 & 0.029 & 0.085 & 0.338 & 1.31 & 6.26 \\
Raw-PGD & 0.079 & 0.049 & 0.208 & 0.487 & 3.99 & 4.92 & 18.60 & 108.00 \\
Raw-Cauchy & 0.193 & 0.075 & 0.116 & 0.48 & 1.07 & 4.79 & 23.90 & 65.60 \\
Raw-SOCP & 0.114 & 0.118 & 0.212 & 0.689 & 2.59 & 13.9 & 92.8 & 689.00 \\
LP-Seidel-SLSQP & 0.038 & 0.043 & 0.052 & 0.109 & 0.297 & 1.09 & 4.26 & 21.40 \\
LP-Clarkson-SLSQP & 0.015 & \textbf{0.006} & \textbf{0.008} & \textbf{0.004} & 0.005 & 0.009 & 0.040 & 0.14 \\
LP-Seidel-SOCP & 0.478 & 0.571 & 0.752 & 2.24 & 7.38 & 35.3 & 208.00 & 1450.0 \\
LP-Clarkson-SOCP & 0.094 & 0.038 & 0.070 & 0.028 & \textbf{0.004} & \textbf{0.008} & \textbf{0.038} & \textbf{0.137} \\
\bottomrule
\end{tabular}
\end{adjustbox}
\end{table}

\begin{figure}[htbp]
\centering
\includegraphics[scale=.451]{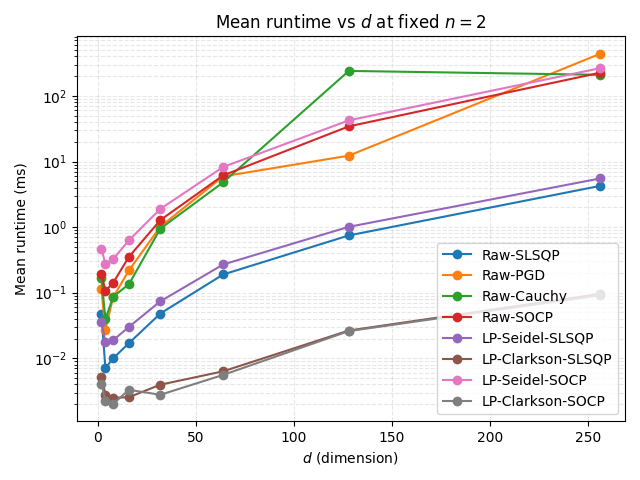}
\includegraphics[scale=.451]{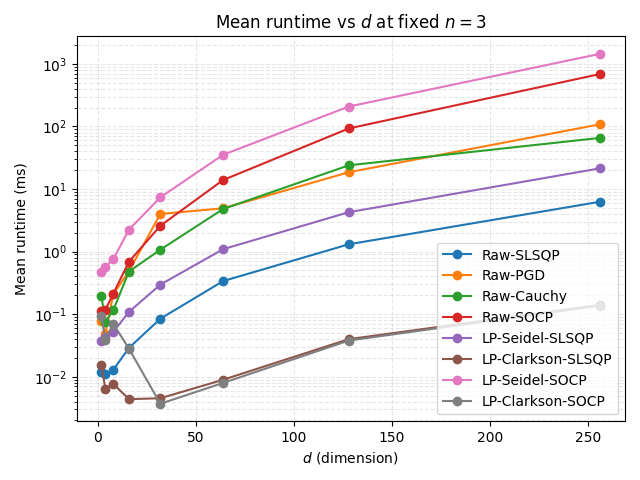}
\caption{Mean runtime for increasing $d$ for fixed $n=2,3$.}
\label{fig_fixed_n_2_3}
\end{figure}

\begin{table}
\caption{Mean runtime (ms) and standard deviation runtime for fixed $d=32$. Best mean runtime performance for each $n$ is in bold. ``K'' stands for thousand.}
\label{tab:runtime_mean_d32_st32}
\begin{adjustbox}{width=\columnwidth,center}
\begin{tabular}{lrrrrrrrrrr}
\toprule
\multirow{2}{*}{\diagbox[width=7.5em,innerleftsep=10pt, innerrightsep=5pt, height=2.9em]{\adjustbox{scale=.8}{Method}}{\adjustbox{scale=.8}{\# Ellipsoids}}}
& \multicolumn{5}{c}{Mean} & \multicolumn{5}{c}{Standard deviation} \\
 \cmidrule(l{3pt}r{3pt}){2-6} \cmidrule(l{3pt}r{3pt}){7-11}
 & $n=2$ & $n=4$ & $n=8$ & $n=16$ & $n=32$  & $n=2$ & $n=4$ & $n=8$ & $n=16$ & $n=32$ \\
\midrule
Raw-SLSQP & 0.0477 & 0.115 & \textbf{0.241} & \textbf{0.769} & \textbf{3.06}  & 0.0144 & 0.0181 & 0.0298 & 0.156 & 0.602 \\
Raw-PGD & 1.03 & 1.17 & 14.40 & 36.90 & 84.20 & 1.55 & 1.24 & 7.60 & 11.90 & 14.40 \\
Raw-Cauchy & 0.959 & 2.51 & 6.79 & 15.5 & 65.30 & 1.58 & 3.60 & 15.40 & 19.20 & 109.00 \\
Raw-SOCP & 1.27 & 4.68 & 23.50 & 146.00 & 1.09K & 0.006 & 0.635 & 2.86 & 11.70 & 100.00 \\
LP-Seidel-SLSQP & 0.072 & 0.886 & 21.70 & 507.00 & 13.1K & 0.015 & 0.056 & 6.86 & 299.00 & 10.8K \\
LP-Clarkson-SLSQP & 0.003 & 0.011 & 0.524 & 3.66 & 12.80 & 0.000 & 0.002 & 0.244 & 1.28 & 7.78 \\
LP-Seidel-SOCP & 1.91 & 24.80 & 883.00 & 30.7K & 1140K  & 0.077 & 1.40 & 310.00 & 20.5K & 1010K \\
LP-Clarkson-SOCP & \textbf{0.003} & \textbf{0.009} & 39.30 & 548.00 & 6.78K  & 0.000 & 0.001 & 20.60 & 271.00 & 11.4K \\
\bottomrule
\end{tabular}
\end{adjustbox}
\end{table}

\begin{table}
\caption{Runtime standard deviation (ms) for fixed $d=2$ (top) and $d=3$ (bottom).}
\label{tab:runtime_sd_d2d3}
\begin{adjustbox}{max width=\columnwidth,center}
\begin{tabular}{lrrrrrrrr}
\toprule
\multirow{2}{*}{\diagbox[width=7.5em,innerleftsep=10pt, innerrightsep=5pt, height=2.9em]{\adjustbox{scale=.8}{Method}}{\adjustbox{scale=.8}{\# Ellipsoids}}}
& \multicolumn{8}{c}{$d=2$} \\
 \cmidrule(l{3pt}r{3pt}){2-9} & $n=2$ & $n=4$ & $n=8$ & $n=16$ & $n=32$ & $n=64$ & $n=128$ & $n=256$ \\
\midrule
Raw-SLSQP & 0.177 & 0.005 & 0.008 & 0.052 & 0.193 & 2.11 & 20.20 & 888.00 \\
Raw-PGD & 0.252 & 0.035 & 0.245 & 0.106 & 0.203 & 1.04 & 3.93 & 2.98 \\
Raw-Cauchy & 0.483 & 0.188 & 0.492 & 3.26 & 0.75 & 4.31 & 27.2 & 53.80 \\
Raw-SOCP & 0.219 & 0.023 & 0.015 & 0.039 & 0.045 & 0.229 & 0.456 & 2.42 \\
LP-Seidel-SLSQP & 0.014 & 0.028 & 0.063 & 0.15 & 0.328 & 0.784 & 1.43 & 2.05 \\
LP-Clarkson-SLSQP & 0.001 & 0.015 & 0.036 & 0.107 & 0.428 & 2.2 & 5.32 & 9.54 \\
LP-Seidel-SOCP & 0.125 & 0.284 & 0.594 & 1.19 & 2.36 & 4.67 & 7.87 & 10.20 \\
LP-Clarkson-SOCP & 0.001 & 0.100 & 0.14 & 0.201 & 0.412 & 0.911 & 1.40 & 1.48 \\
\midrule
& \multicolumn{8}{c}{$d=3$} \\
 \cmidrule(l{3pt}r{3pt}){2-9}
Raw-SLSQP & 0.001 & 0.003 & 0.008 & 0.059 & 0.384 & 2.51 & 31.6 & 237.00 \\
Raw-PGD & 0.017 & 0.118 & 0.388 & 0.183 & 0.457 & 0.634 & 3.86 & 8.64 \\
Raw-Cauchy & 0.042 & 0.129 & 2.30 & 1.53 & 4.08 & 16.40 & 34.10 & 11.10 \\
Raw-SOCP & 0.018 & 0.013 & 0.018 & 0.027 & 0.080 & 0.385 & 3.46 & 22.10 \\
LP-Seidel-SLSQP & 0.002 & 0.025 & 0.104 & 0.328 & 0.857 & 3.28 & 4.73 & 9.83 \\
LP-Clarkson-SLSQP & 0.000 & 0.016 & 0.040 & 0.126 & 0.531 & 2.93 & 20.20 & 40.70 \\
LP-Seidel-SOCP & 0.029 & 0.291 & 0.890 & 2.64 & 6.28 & 20.8 & 27.90 & 45.50 \\
LP-Clarkson-SOCP & 0.000 & 0.129 & 0.185 & 0.37 & 0.897 & 2.48 & 147.00 & 17.40 \\
\bottomrule
\end{tabular}
\end{adjustbox}
\end{table}

\begin{table}
\caption{Runtime standard deviation (ms) for fixed $n=2$ (top) and $n=3$ (bottom).}
\label{tab:runtime_sd_n2n3}
\begin{adjustbox}{max width=\columnwidth,center}
\begin{tabular}{lrrrrrrrr}
\toprule
\multirow{2}{*}{\diagbox[width=7.5em,innerleftsep=10pt, innerrightsep=5pt, height=2.9em]{\adjustbox{scale=.8}{Method}}{\adjustbox{scale=.8}{Dimension}}}
& \multicolumn{8}{c}{$n=2$} \\
 \cmidrule(l{3pt}r{3pt}){2-9} & $d=2$ & $d=4$ & $d=8$ & $d=16$ & $d=32$ & $d=64$ & $d=128$ & $d=256$ \\
\midrule
Raw-SLSQP & 0.180 & 0.002 & 0.009 & 0.00415 & 0.014 & 0.071 & 0.242 & 1.10 \\
Raw-PGD & 0.274 & 0.020 & 0.083 & 0.421 & 1.53 & 8.48 & 10.20 & 931 \\
Raw-Cauchy & 0.478 & 0.104 & 0.234 & 0.218 & 1.56 & 6.23 & 984.00 & 330.00 \\
Raw-SOCP & 0.208 & 0.011 & 0.021 & 0.024 & 0.071 & 0.218 & 0.996 & 7.76 \\
LP-Seidel-SLSQP & 0.014 & 0.002 & 0.004 & 0.005 & 0.015 & 0.069 & 0.238 & 1.09 \\
LP-Clarkson-SLSQP & 0.001 & 0.000 & 0.000 & 0.000 & 0.004 & 0.000 & 0.001 & 0.009 \\
LP-Seidel-SOCP & 0.134 & 0.044 & 0.052 & 0.054 & 0.074 & 0.358 & 1.35 & 8.91 \\
LP-Clarkson-SOCP & 0.001 & 0.000 & 0.000 & 0.007 & 0.000 & 0.000 & 0.001 & 0.001 \\
\midrule
& \multicolumn{8}{c}{$n=3$} \\
 \cmidrule(l{3pt}r{3pt}){2-9}
 Raw-SLSQP & 0.003 & 0.003 & 0.002 & 0.006 & 0.015 & 0.066 & 0.271 & 1.06 \\
Raw-PGD & 0.284 & 0.046 & 0.635 & 1.29 & 9.48 & 5.92 & 9.11 & 51.20 \\
Raw-Cauchy & 0.40 & 0.085 & 0.196 & 1.27 & 0.941 & 6.71 & 58.70 & 62.50 \\
Raw-SOCP & 0.024 & 0.012 & 0.015 & 0.076 & 0.174 & 0.761 & 5.80 & 40.70 \\
LP-Seidel-SLSQP & 0.013 & 0.009 & 0.018 & 0.018 & 0.038 & 0.141 & 0.543 & 2.60 \\
LP-Clarkson-SLSQP & 0.007 & 0.005 & 0.008 & 0.004 & 0.000 & 0.001 & 0.001 & 0.004 \\
LP-Seidel-SOCP & 0.138 & 0.126 & 0.217 & 0.3 & 0.254 & 0.839 & 6.05 & 66.10 \\
LP-Clarkson-SOCP & 0.048 & 0.056 & 0.106 & 0.139 & 0.000 & 0.000 & 0.000 & 0.005 \\
\bottomrule
\end{tabular}
\end{adjustbox}
\end{table}

\begin{table}
\caption{Standard deviation runtime (ms) for fixed $d=2,4,8,16$. ``K'' stands for thousand.}
\label{tab:runtime_st_d2d4d8d16}
\begin{adjustbox}{width=\columnwidth,center}
\begin{tabular}{lrrrrrrrrrr}
\toprule
\multirow{2}{*}{\diagbox[width=7.5em,innerleftsep=10pt, innerrightsep=5pt, height=2.9em]{\adjustbox{scale=.8}{Method}}{\adjustbox{scale=.8}{\# Ellipsoids}}}
& \multicolumn{5}{c}{$d=2$} & \multicolumn{5}{c}{$d=4$} \\
 \cmidrule(l{3pt}r{3pt}){2-6} \cmidrule(l{3pt}r{3pt}){7-11}
 & $n=2$ & $n=4$ & $n=8$ & $n=16$ & $n=32$  & $n=2$ & $n=4$ & $n=8$ & $n=16$ & $n=32$ \\
\midrule
Raw-SLSQP & 0.116 & 0.004 & 0.009 & 0.048 & 0.859 & 0.002 & 0.002 & 0.018 & 0.050 & 0.444 \\
Raw-PGD & 0.257 & 0.036 & 0.217 & 0.095 & 0.191 & 0.016 & 0.047 & 0.181 & 0.435 & 0.385 \\
Raw-Cauchy & 0.437 & 0.172 & 0.417 & 2.69 & 0.633 & 0.083 & 0.923 & 0.217 & 6.20 & 1.31 \\
Raw-SOCP & 0.195 & 0.020 & 0.014 & 0.011 & 0.056 & 0.004 & 0.011 & 0.045 & 0.085 & 0.784 \\
LP-Seidel-SLSQP & 0.011 & 0.028 & 0.057 & 0.124 & 0.291 & 0.006 & 0.030 & 0.164 & 0.902 & 2.75 \\
LP-Clarkson-SLSQP & 0.002 & 0.014 & 0.035 & 0.098 & 0.548 & 0.000 & 0.013 & 0.046 & 0.142 & 0.712 \\
LP-Seidel-SOCP & 0.122 & 0.279 & 0.561 & 1.11 & 3.02 & 0.028 & 0.313 & 2.05 & 7.92 & 23.00 \\
LP-Clarkson-SOCP & 0.002 & 0.096 & 0.138 & 0.188 & 0.684 & 0.00 & 0.129 & 0.357 & 6.13 & 2.63 \\
\midrule
& \multicolumn{5}{c}{$d=8$} & \multicolumn{5}{c}{$d=16$} \\
 \cmidrule(l{3pt}r{3pt}){2-6} \cmidrule(l{3pt}r{3pt}){7-11}
 & $n=2$ & $n=4$ & $n=8$ & $n=16$ & $n=32$  & $n=2$ & $n=4$ & $n=8$ & $n=16$ & $n=32$ \\
\midrule
Raw-SLSQP & 0.002 & 0.005 & 0.007 & 0.044 & 0.338 & 0.004 & 0.008 & 0.011 & 0.061 & 0.512 \\
Raw-PGD & 0.077 & 0.173 & 0.576 & 0.893 & 1.21 & 0.432 & 0.627 & 1.27 & 1.60 & 2.89 \\
Raw-Cauchy & 0.214 & 2.11 & 1.16 & 1.15 & 5.89 & 0.204 & 0.78 & 5.78 & 6.80 & 15.40 \\
Raw-SOCP & 0.004 & 0.034 & 0.076 & 0.290 & 1.99 & 0.017 & 0.082 & 0.497 & 2.20 & 11.50 \\
LP-Seidel-SLSQP & 0.002 & 0.043 & 0.55 & 10.20 & 37.50 & 0.004 & 0.068 & 1.64 & 54.40 & 924.00 \\
LP-Clarkson-SLSQP & 0.000 & 0.016 & 0.065 & 0.209 & 1.19 & 0.000 & 0.025 & 0.077 & 0.538 & 1.53 \\
LP-Seidel-SOCP & 0.024 & 0.588 & 8.01 & 153 & 547 & 0.047 & 1.26 & 48.10 & 2.06K & 40.8K \\
LP-Clarkson-SOCP & 0.00 & 0.255 & 1.05 & 2.94 & 138.00 & 0.000 & 0.699 & 3.31 & 30.30 & 90.20 \\
\bottomrule
\end{tabular}
\end{adjustbox}
\end{table}

\begin{figure}[htbp]
\centering
\includegraphics[scale=.451]{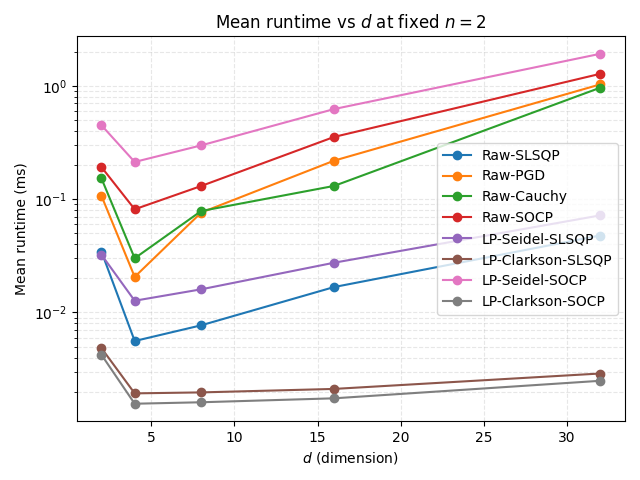}
\includegraphics[scale=.451]{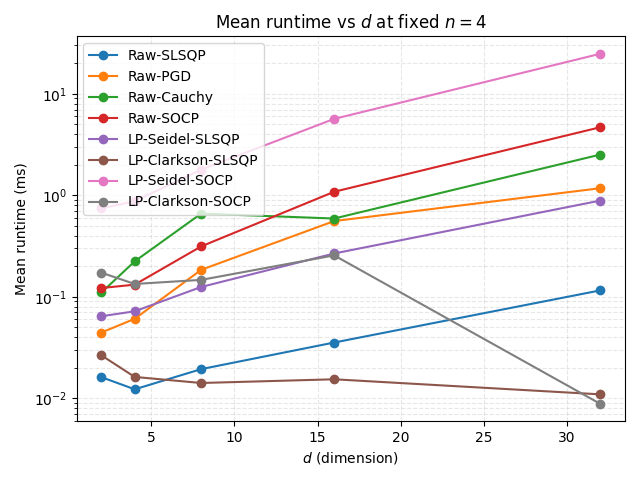}
\includegraphics[scale=.451]{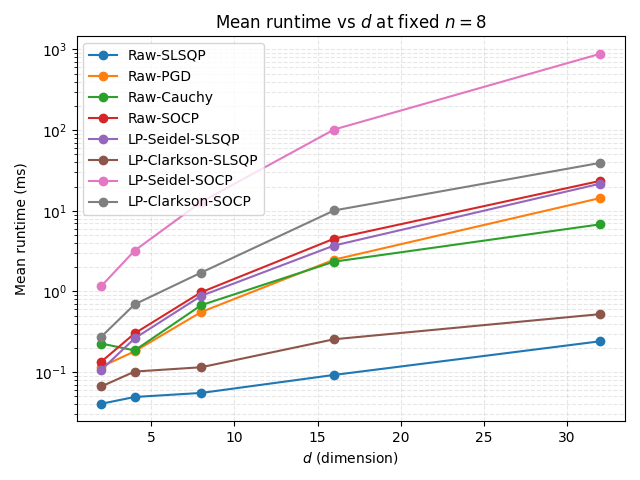}
\includegraphics[scale=.451]{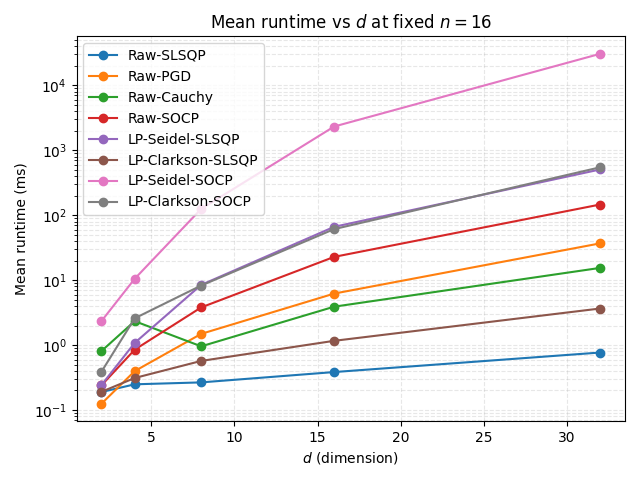}
\caption{Mean runtime for increasing ambient dimension $d$ for fixed $n=2,4$ (top) and $n=8,16$ (bottom).}
\label{fig:fixed_n24816_vs_d}
\end{figure}

\begin{figure}[htbp]
\centering
\includegraphics[scale=.427]{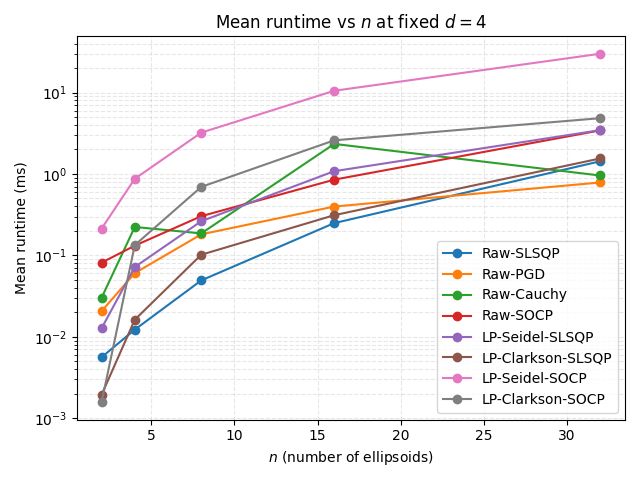}
\includegraphics[scale=.427]{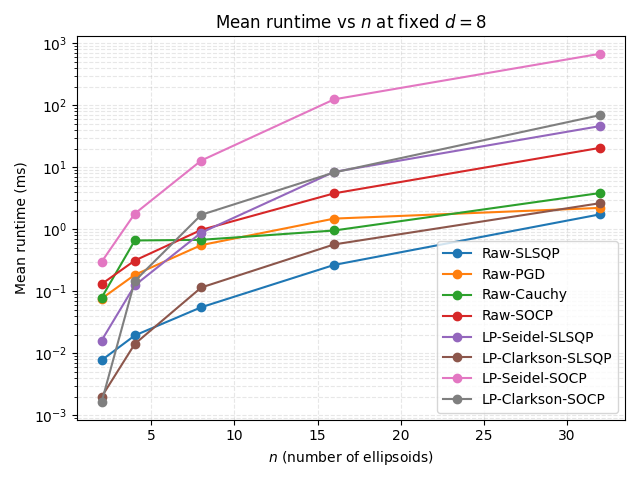}
\caption{Mean runtime for increasing number of ellipsoids $n$ at $d=4$ (left) and $d=8$ (right); see also \cref{fig:fixed_d24816_vs_n}.}
\label{fig:fixed_d48_vs_n}
\end{figure}

\begin{figure}[htbp]
\centering
\includegraphics[scale=.427]{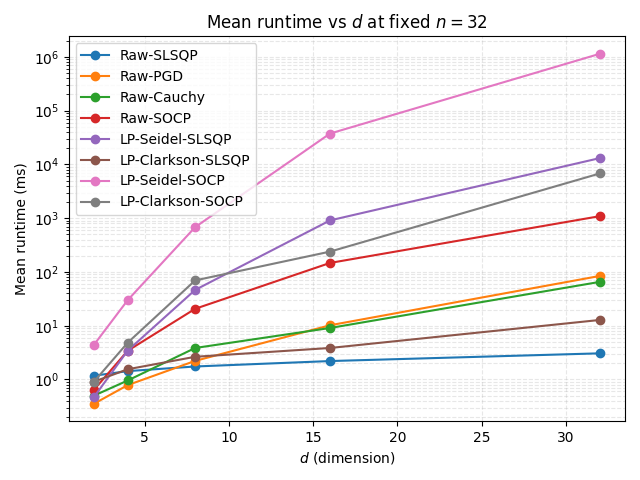}
\includegraphics[scale=.427]{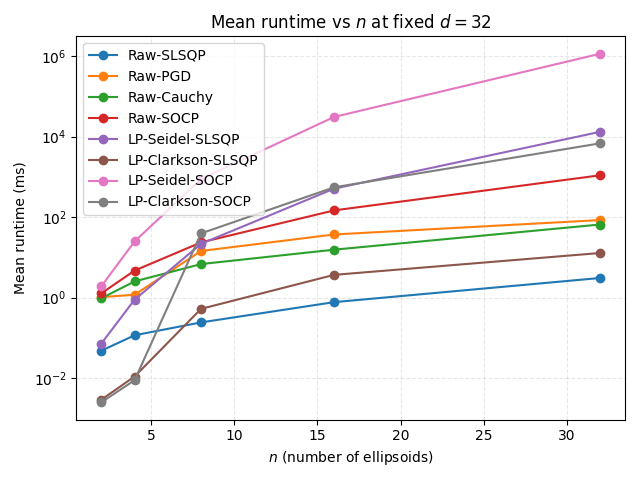}
\caption{Mean runtime for increasing ambient dimension $d$ for fixed $n=32$ (left) and for increasing number of ellipsoids $n$ for fixed $d=32$ (right).}
\label{fig:d32_n32}
\end{figure}

\begin{figure}[htbp]
\centering
\includegraphics[scale=.427]{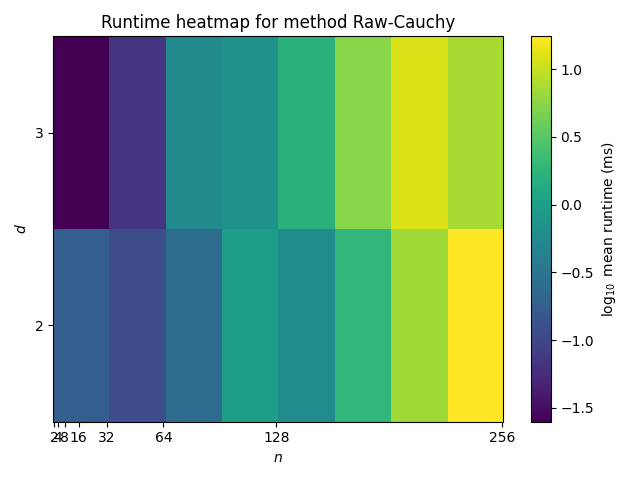}
\includegraphics[scale=.427]{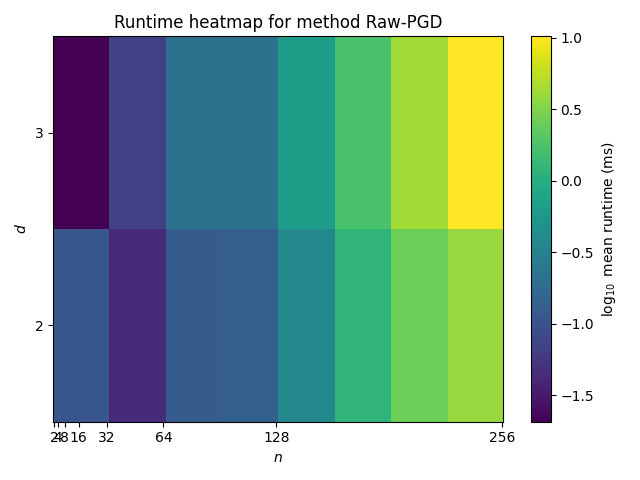}
\caption{Heatmaps of the mean runtime for the raw methods Cauchy (left) and PGD (right), for $d=2,3$ and increasing $n$. In each plot, the horizontal axis is the number of ellipsoids, and the vertical axis is the ambient dimension.}
\label{fig_heatmaps_cauchy_pgd_d}
\end{figure}

\begin{figure}[htbp]
\centering
\includegraphics[scale=.299]{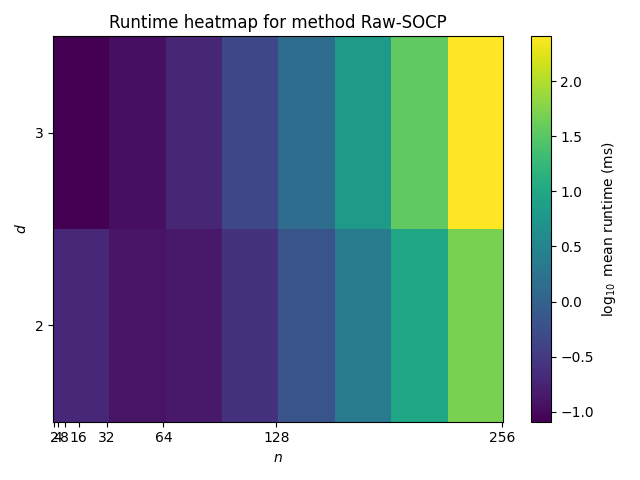}
\includegraphics[scale=.299]{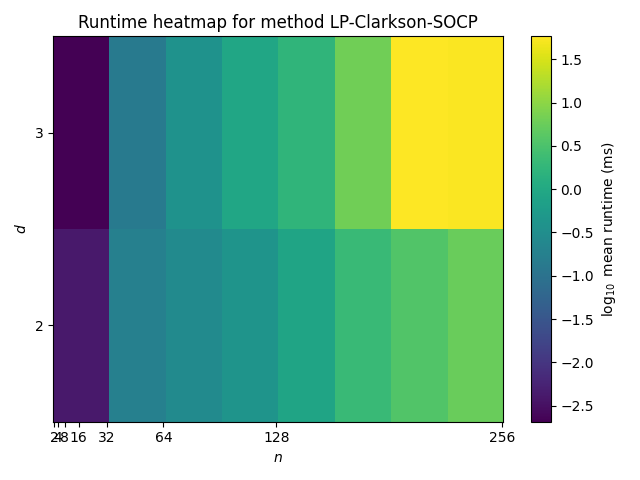}
\includegraphics[scale=.299]{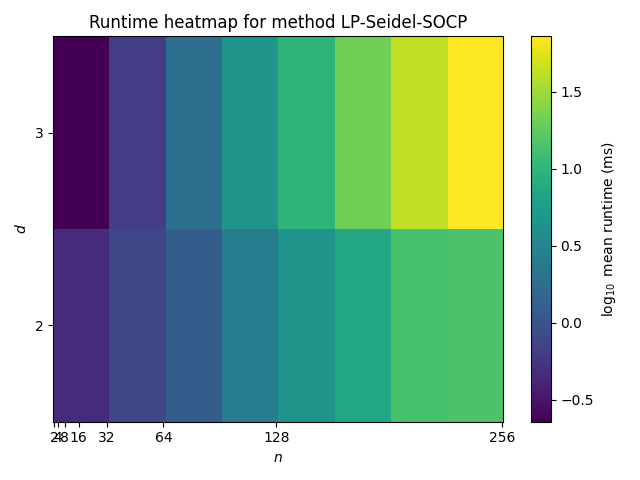}
\caption{Heatmaps of the mean runtime for the methods using SOCP, namely the raw solve (left), LP-Clarkson (center), and LP-Seidel (right), for $d=2,3$ and increasing $n$. In each plot, the horizontal axis is the number of ellipsoids, and the vertical axis is the ambient dimension.}
\label{fig_heatmaps_socp_d}
\end{figure}

\begin{figure}[htbp]
\centering
\includegraphics[scale=.299]{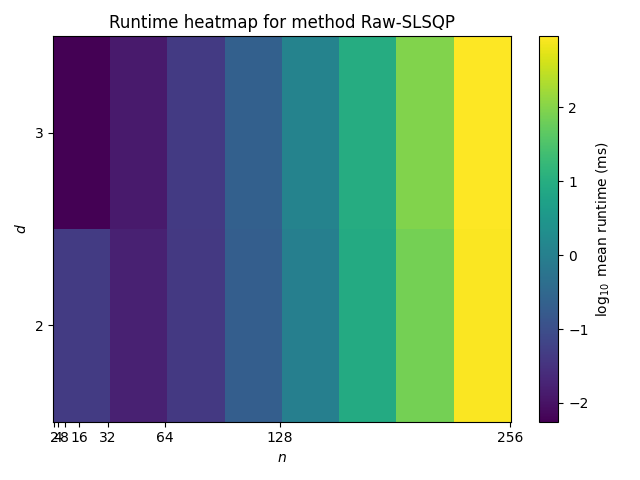}
\includegraphics[scale=.299]{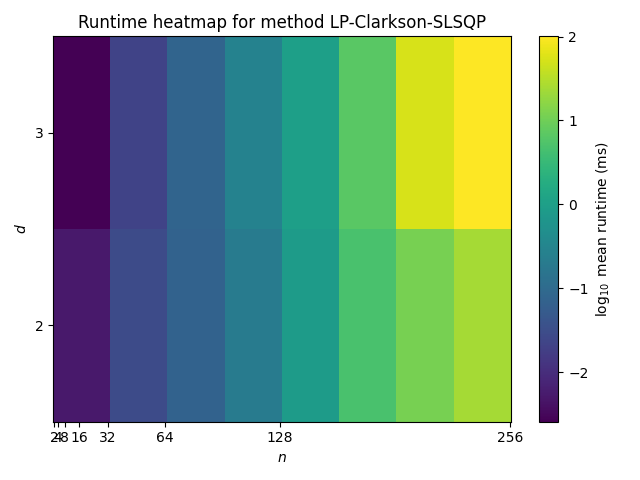}
\includegraphics[scale=.299]{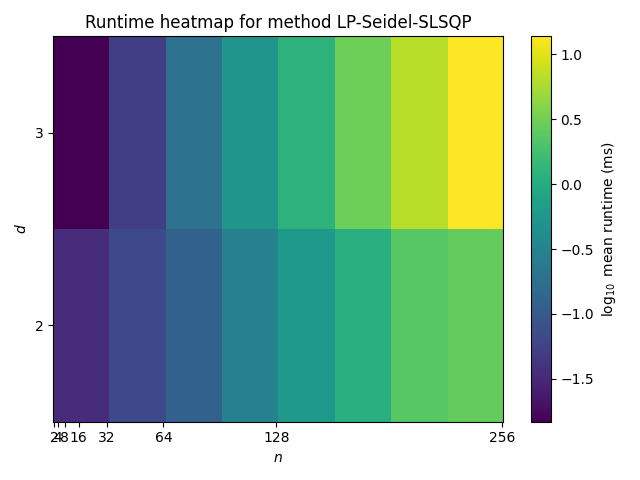}
\caption{Heatmaps of the mean runtime for the methods using SLSQP, namely the raw solve (left), LP-Clarkson (center), and LP-Seidel (right), for $d=2,3$ and increasing $n$. In each plot, the horizontal axis is the number of ellipsoids, and the vertical axis is the ambient dimension.}
\label{fig_heatmaps_slsqp_d}
\end{figure}

\begin{figure}[htbp]
\centering
\includegraphics[scale=.427]{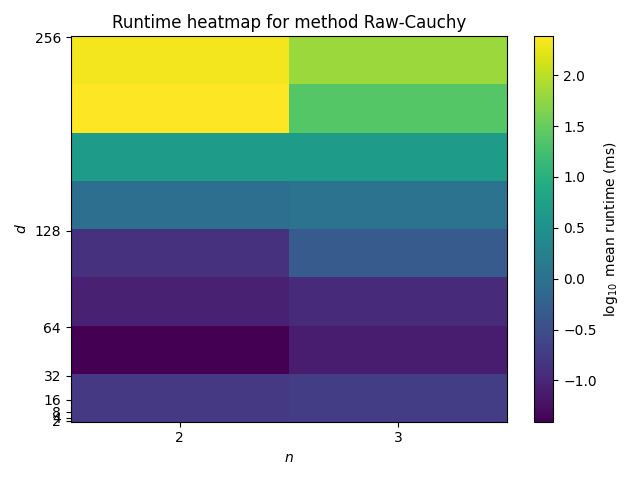}
\includegraphics[scale=.427]{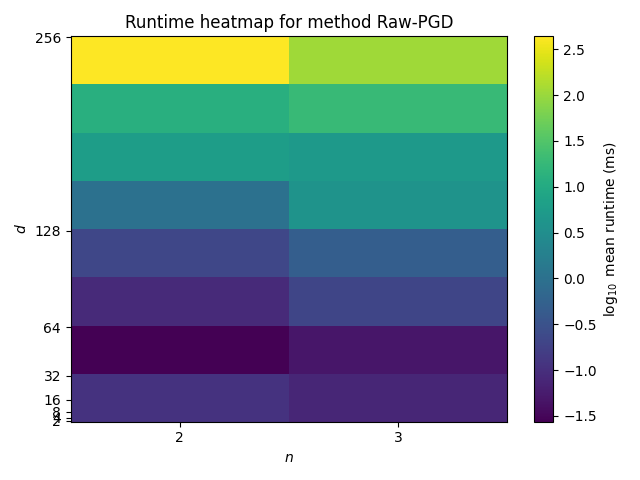}
\caption{Heatmaps of the mean runtime for the raw methods Cauchy (left) and PGD (right), for $n=2,3$ and increasing $d$. In each plot, the horizontal axis is the ambient dimension, and the vertical axis is the number of ellipsoids.}
\label{fig_heatmaps_cauchy_pgd_n}
\end{figure}

\begin{figure}[htbp]
\centering
\includegraphics[scale=.299]{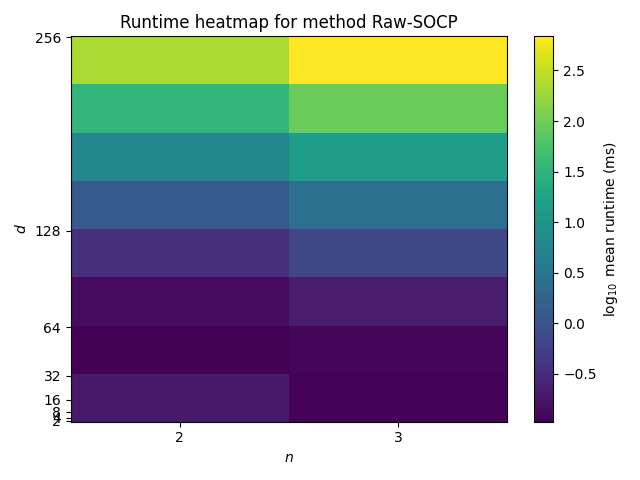}
\includegraphics[scale=.299]{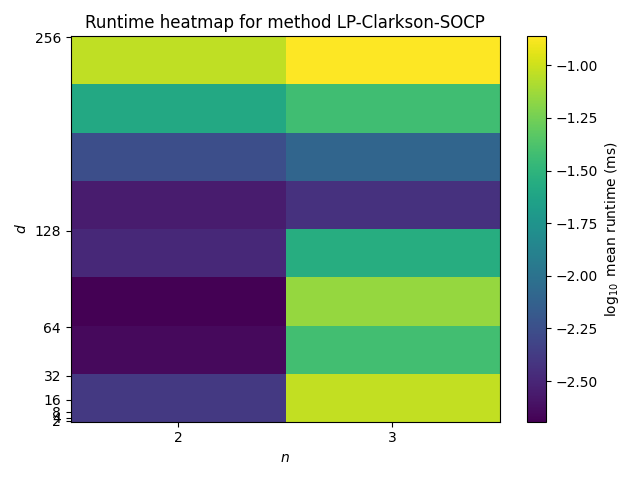}
\includegraphics[scale=.299]{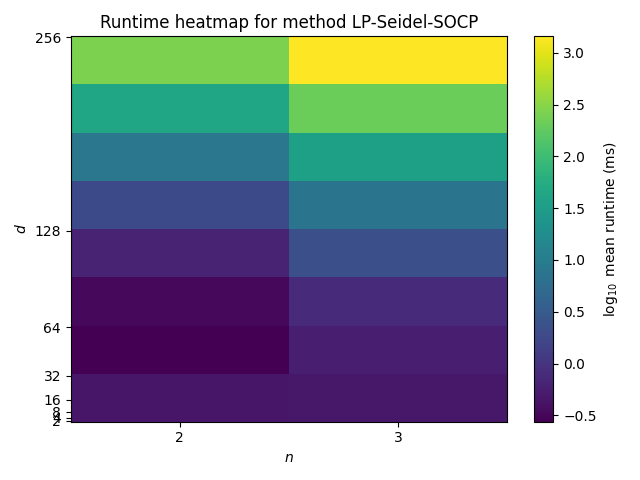}
\caption{Heatmaps of the mean runtime for the methods using SOCP, namely the raw solve (left), LP-Clarkson (center), and LP-Seidel (right), for $n=2,3$ and increasing $d$. In each plot, the horizontal axis is the ambient dimension, and the vertical axis is the number of ellipsoids.}
\label{fig_heatmaps_socp_n}
\end{figure}

\begin{figure}[htbp]
\centering
\includegraphics[scale=.299]{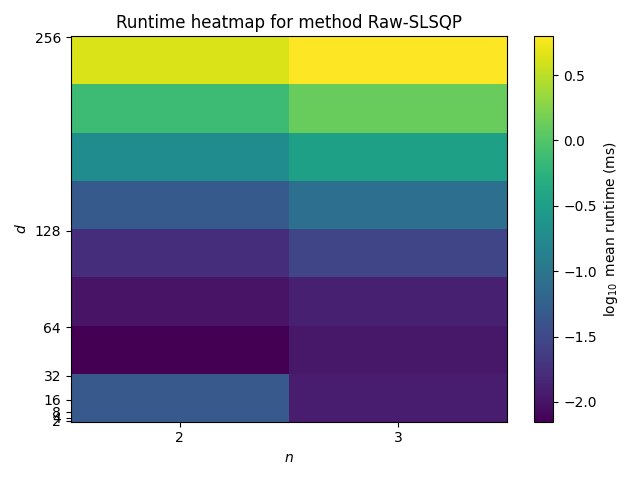}
\includegraphics[scale=.299]{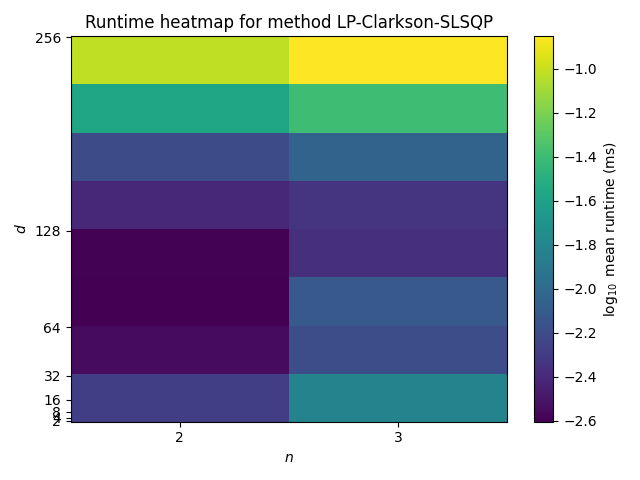}
\includegraphics[scale=.299]{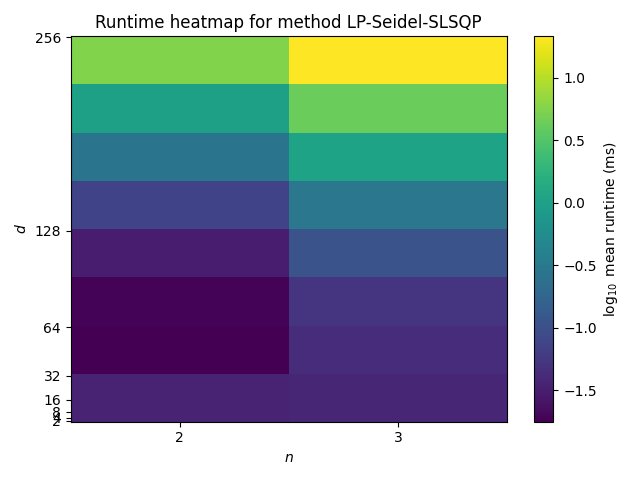}
\caption{Heatmaps of the mean runtime for the methods using SLSQP, namely the raw solve (left), LP-Clarkson (center), and LP-Seidel (right), for $n=2,3$ and increasing $d$. In each plot, the horizontal axis is the ambient dimension, and the vertical axis is the number of ellipsoids.}
\label{fig_heatmaps_slsqp_n}
\end{figure}

\begin{figure}[htbp]
\centering
\includegraphics[scale=.427]{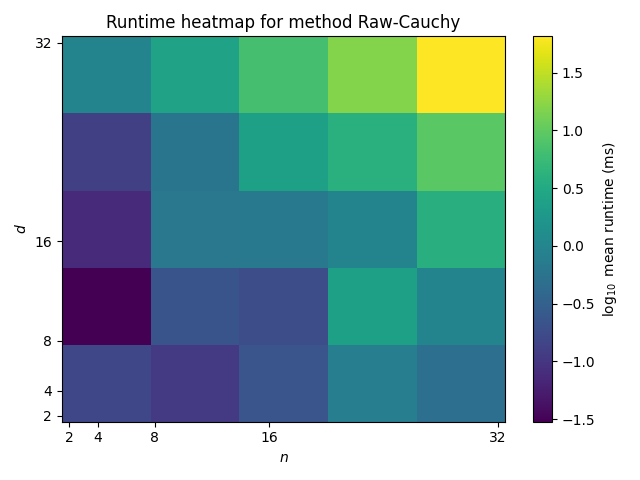}
\includegraphics[scale=.427]{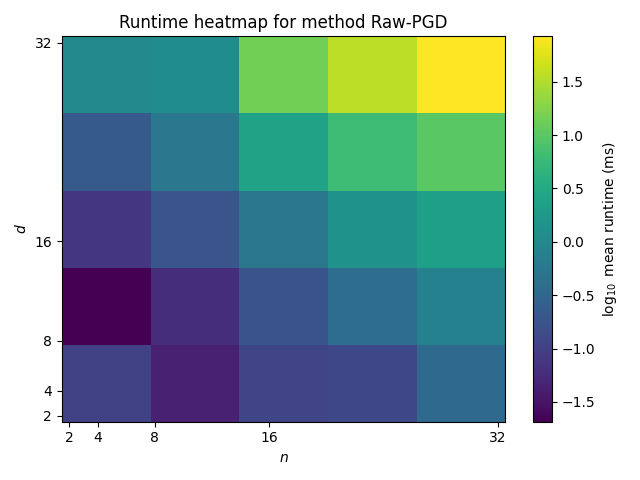}
\caption{Heatmaps of the mean runtime for the raw methods Cauchy (left) and PGD (right), for $n,d=2,4,8,16,32$. In each plot, the horizontal axis is the number of ellipsoids, and the vertical axis is the ambient dimension.}
\label{fig_heatmaps_cauchy_pgd_nd}
\end{figure}

\begin{figure}[htbp]
\centering
\includegraphics[scale=.299]{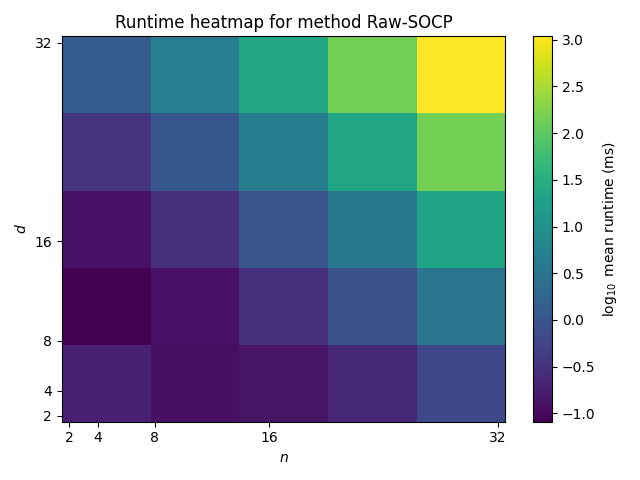}
\includegraphics[scale=.299]{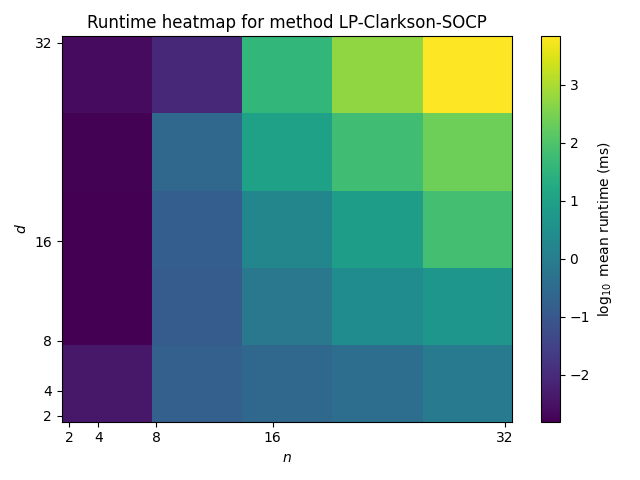}
\includegraphics[scale=.299]{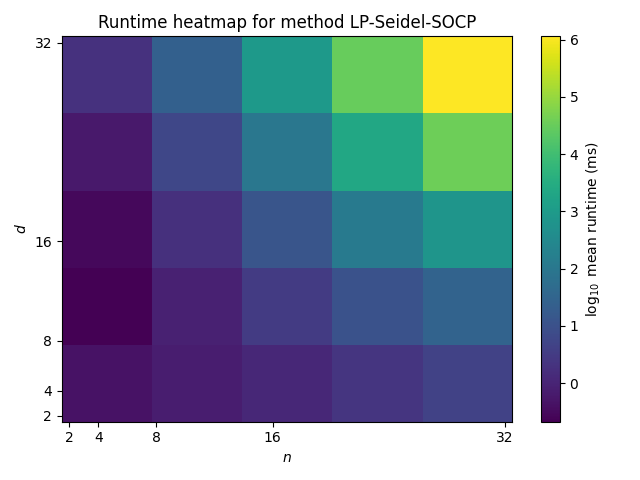}
\caption{Heatmaps of the mean runtime for the methods using SOCP, namely the raw solve (left), LP-Clarkson (center), and LP-Seidel (right), for $n,d=2,4,8,16,32$. In each plot, the horizontal axis is the number of ellipsoids, and the vertical axis is the ambient dimension.}
\label{fig_heatmaps_socp_nd}
\end{figure}

\begin{figure}[htbp]
\centering
\includegraphics[scale=.299]{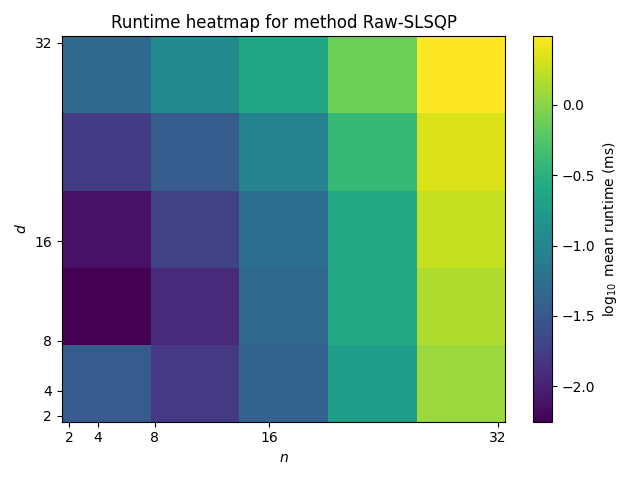}
\includegraphics[scale=.299]{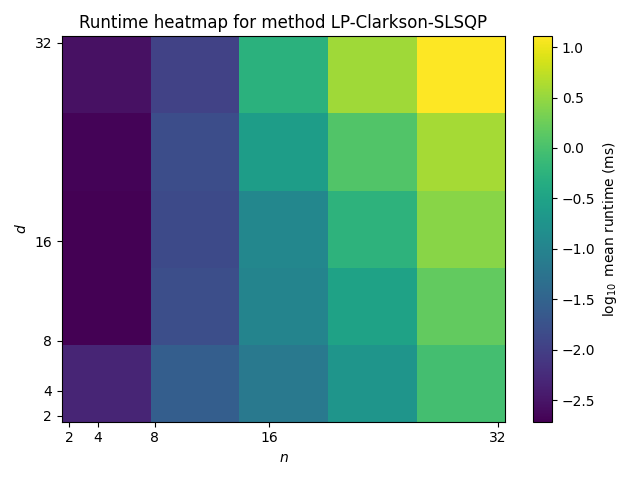}
\includegraphics[scale=.299]{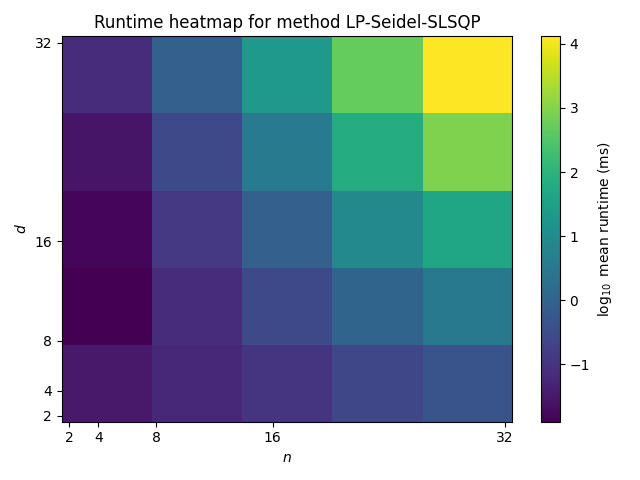}
\caption{Heatmaps of the mean runtime for the methods using SLSQP, namely the raw solve (left), LP-Clarkson (center), and LP-Seidel (right), for $n,d=2,4,8,16,32$. In each plot, the horizontal axis is the number of ellipsoids, and the vertical axis is the ambient dimension.}
\label{fig_heatmaps_slsqp_nd}
\end{figure}

\end{document}